\newcommand\Tmbar{\clos{\set{T}}_{\bar{\idl{m}}}}
\newcommand\Tbar{\clos{\set{T}}}
\begin{document}


\baselineskip=17pt


\title{Filtrations of dc-weak eigenforms}

\author{Nadim Rustom\\
National Center for Theoretical Sciences\\ 
National Taiwan University\\
Room 203, Astronomy-Mathematics Building\\
No. 1, Sec. 4, Roosevelt Rd.\\Taipei City 106, Taiwan\\
E-mail: rustom.nadim@ncts.ntu.edu.tw}

\date{}

\maketitle


\renewcommand{\thefootnote}{}

\footnotetext{2010 \emph{Mathematics Subject Classification}: Primary 11F80; Secondary 11F33.}

\footnotetext{\emph{Key words and phrases}: Modular forms, Galois representations, higher congruences.}

\renewcommand{\thefootnote}{\arabic{footnote}}
\setcounter{footnote}{0}


\begin{abstract}
    The notions of strong, weak and dc-weak eigenforms mod $p^n$ was introduced and studied in \cite{CKW13}. In this work, we prove that there can be no uniform weight bound (that is, depending only on $p$, $n$) on dc-weak eigenforms mod $p^n$ of fixed level when $n \geq 2$. This is in contrast with the result of \cite{KRW16} which establishes a uniform weight bound on strong eigenforms mod $p^n$. As a step towards studying weights bounds for weak eigenforms mod $p^n$, we provide a criterion which allows us to detect whether a given dc-weak eigenform mod $p^n$ is weak.
\end{abstract}

\section{Introduction}
Throughout this paper, we denote by $p$ a prime number such that $p \geq 5$. For such a prime $p$, we fix an embedding $\injmor{\Qc}{\Qc[p]}$. We denote by $\sh{O}$ the ring of integers of a finite extension of $\Q[p]$, by $e$ the ramification index of $\sh{O}$, by $\pi$ a uniformiser of $\sh{O}$ and by $k$ the residue field of $\sh{O}$. We will consider modular forms and divided congruences of level $N =  p^{r}N_0$ (i.e., on $\Gamma_1(N)$) where $p \nmid N_0$ and $r \geq 1$. We will make precise the choice of these objects when needed.

Let $n\geq 1$ be an integer. For finite extensions $K$ and $L$ of $\Q[p]$ with $K \subset L$, with respective uniformisers $\pi_K$ and $\pi_L$ and respective ramification indices $e_K$ and $e_L$, we have inclusions
\[ \Zmod{p}{n} \subset \sh{O}_K/\pi^{e_K(n-1)+1} \subset \sh{O}_L/\pi^{e_L(n-1)+1}. \]
In fact, $t = e_K(n-1)+1$ is the smallest exponent $t$ such that $\Zmod{p}{n}$ embeds into $O_K/\pi^t$.
Thus we have a ring
\[ \Zmodbar{p}{n} = \varinjlim \sh{O}/\pi^{e(n-1)+1} \]
where the limit is over all rings of integers $\sh{O}$ of finite extensions of $\Q[p]$. Equivalently, we have
\[\Zmodbar{p}{n} = \Zc[p] / \{x \in \Zc[p] : v_p(x) > n - 1\} \] where $v_p$ is the normalised ($v_p(p) = 1$) valuation on $\Qc[p]$. Note that for $n = 1$, we have $\Zmodbar{p}{n} = \Fc[p]$. We think of $\Zmodbar{p}{n}$ as a topological ring with the discrete topology. For $\alpha, \beta \in \Zc[p]$, by $\alpha \pmod{p^n}$ and $\beta \pmod{p^n}$ we mean their images in $\Zmodbar{p}{n}$, and we will say that $\alpha \equiv \beta \pmod{p^n}$ if their images in $\Zmodbar{p}{n}$ agree. 

The work in the present paper falls within the theory of  mod $p^n$ modular Galois representations. The underlying motivation of that theory is the following. Let $G_{\Q} = \Gal(\Qc / \Q)$ be the absolute Galois group of $\Q$. Suppose we have a Hecke eigenform $f$ on $\Gamma_1(N)$, and let 
\[ \rho_{f,p}: \mor{G_{\Q}}{\GL(V)} \]be the $p$-adic Galois representation attached to it (i.e., $V$ is a $2$-dimensional $\Qc[p]$-vector space with a continuous action of $G_{\Q}$). The choice\footnote{Since these representations are continuous and the Galois group is compact, one can always find such a lattice.}  of a $G_{\Q}$-stable lattice $\Lambda$ in $V$ gives a representation
\[ \rho_{f, \Lambda, p } : \mor{G_{\Q}}{\GL_2(\Zc[p])}. \] For every integer $n \geq 1$, the reduction map $\surjmor{\Zc}{\Zmodbar{p}{n}}$ induces a mod $p^n$ Galois representation 
\[ \rho_{f,\Lambda,p,n}: \mor{G_{\Q}}{\GL_2(\Zmodbar{p}{n})} \]
attached to $f$. The Chenevier determinant (\cite{Che14}) arising from $\rho_{f, \Lambda, p, n}$ does not depend on the choice of $\Lambda$. If additionally $\Lambda\otimes \Fc[p]$ is irreducible then, by a result of Carayol (\cite{Car91}) together with the Chebotarev density theorem, $\rho_{f,\Lambda,p,n}$ is determined up to isomorphism by its traces at the Frobenius elements $\Frob_\ell$ for almost all $\ell$ and therefore does not depend on $\Lambda$. The representations $\rho_{f,\Lambda,p,n}$ are approximations of $\rho_{f,p}$, and one can hope to understand $\rho_{f,p}$ by first understanding its approximations $\rho_{f,p,n}$. 

The corresponding theory when $n=1$ has been of fundamental importance and has had numerous applications in arithmetic and geometry, e.g., in the proof of Fermat's Last Theorem and in the proof of Serre's modularity conjecture. This theory has a ``level" aspect and a ``weight" aspect. The most important part of the ``level" aspect is Ribet's level lowering theory, which translates statements about the structure of the restriction of $\rho_{f,p,1}$ to a decomposition subgroup $G_{\Q[\ell]} = \Gal(\Qc[\ell]/\Q[\ell])$ to a congruence mod $p$ between $f$ and some other Hecke eigenform of level $N / \ell$ whenever $\ell$ is a prime divisor of $N$ and $\ell \not = p$. Thus it is natural to try to understand $\rho_{f,p,n}$ by studying congruences mod $p^n$ between eigenforms. This approach has been developed to some extent, for example in \cite{Dum05} and \cite{Tsa09}. An application of level lowering mod $p^n$ to Diophantine problems is given in \cite{DY12}.

Related to this is the problem of stripping powers of $p$ from the level. Concretely, we have the following result.
\begin{theorem}[Buzzard, Hatada, Ribet]Let $p^r$ be the highest power of $r$ dividing $N$. Then there exists a Hecke eigenform $g$ of level $N/p^r$ such that $\rho_{f,p,1} \cong \rho_{g,p,1}$.
\end{theorem}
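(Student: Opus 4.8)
The plan is to descend to characteristic $p$, remove the $p$-power part of the level there, and lift the resulting mod $p$ eigenform back to characteristic zero. Fix a $G_{\Q}$-stable lattice $\Lambda$ and let $\bar\rho$ be the semisimplification of the reduction of $\rho_{f,\Lambda,p}$ modulo $\pi$; since $\rho_{f,p,1}$ depends only on this semisimplification, it suffices to produce a Hecke eigenform $g$ of level $N_0 := N/p^r$ and some weight $k' \ge 2$ with $a_\ell(g) \equiv a_\ell(f) \pmod p$ for almost all primes $\ell$. Indeed, such a congruence forces $\rho_{g,p,1}$ and $\rho_{f,p,1}$ to have equal traces on $\Frob_\ell$ for almost all $\ell$, hence, by the Brauer--Nesbitt theorem and the Chebotarev density theorem, equal semisimplifications; and when $\bar\rho$ is irreducible --- the case in which $\rho_{f,\Lambda,p,1}$ is genuinely independent of $\Lambda$, as recalled above --- this yields $\rho_{g,p,1} \cong \rho_{f,p,1}$.

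First I would reduce $f$ modulo $\pi$ to obtain a mod $p$ (Katz) eigenform $\bar f$ of weight $\mathrm{wt}(f)$ and level $N = p^r N_0$ realising $\bar\rho$. The heart of the matter is then to produce a mod $p$ eigenform $\bar g$ of level $N_0$, prime to $p$, of some weight $k'$, with the same Hecke eigenvalues at all $\ell \nmid Np$. When $\bar\rho$ is irreducible this is the level-and-weight refinement of Serre's conjecture: $\bar\rho$ is unramified outside $Np$, and its prime-to-$p$ Artin conductor divides that of $\rho_{f,p}$, which divides $N_0$; the theory of the predicted Serre weight then produces a mod $p$ eigenform of level $N_0$ and weight $k(\bar\rho)$ realising $\bar\rho$. (When $f$ is ordinary at $p$ one may instead argue elementarily via Hida theory, descending the eigensystem of $\bar f$ to its $p$-depleted form of level $N_0$.) When $\bar\rho$ is reducible, $\bar\rho = \chi_1 \oplus \chi_2$ with each $\chi_i$ ramified only at $p$ and at primes dividing $N_0$; since a mod $p$ character of $p$-power conductor has conductor dividing $p$ and differs from an unramified-at-$p$ character only by a power of the mod $p$ cyclotomic character, $\bar\rho$ agrees, after twisting by such a power --- equivalently, after applying a power of the operator $\theta$, which only changes the weight --- with the representation attached to an Eisenstein eigenform of level $N_0$, and the twist is again realised by an Eisenstein-type eigenform of level $N_0$ of larger weight.

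Finally I would apply the Deligne--Serre lifting lemma: after possibly replacing $k'$ by $k'+p-1$ so that the mod $p$ eigensystem of $\bar g$ occurs in the reduction of a characteristic zero space of modular forms of level $N_0$, the Hecke algebra acting on that finite-dimensional space is finite over $\mathbb{Z}$, so the system of $\Fc[p]$-valued Hecke eigenvalues of $\bar g$ lifts to the system of eigenvalues of a characteristic zero Hecke eigenform $g$ of level $N_0$; this $g$ has the required property. The step I expect to be the main obstacle is the middle one --- passing from ``$\bar\rho$ is modular of level divisible by $p^r$'' (which is automatic, as $\bar\rho$ comes from $f$) to ``$\bar\rho$ is modular of level prime to $p$''. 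This is precisely the arithmetic input attributed to Buzzard, Hatada and Ribet; it rests on analysing $\bar\rho|_{G_{\Q[p]}}$ finely enough to pin down the Serre weight (respectively, on the behaviour of mod $p$ modular forms of level $\Gamma_1(Np^r)$ under the operators $U_p$, $V_p$ and $\theta$), whereas the descent to characteristic $p$ and the lifting lemma are comparatively formal.
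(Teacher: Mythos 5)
The paper does not prove this theorem; it attributes the case $p\geq 3$ to Ribet (\cite{Rib94}, Theorem 2.1) and the case $p=2$ independently to Buzzard (\cite{Buz00}) and Hatada (\cite{Hat01}), and moves on. So there is no in-paper argument to compare against; what follows is an assessment of your proposal on its own terms.

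Your outer reduction scheme --- reduce $f$ modulo $\pi$, remove the $p$-power from the level in characteristic $p$, then lift back by Deligne--Serre --- is the right frame, and the first and last steps are handled correctly. But the middle step, which you yourself flag as the substance of the theorem, is circular as written. In the irreducible case you propose to invoke the level-and-weight refinement of Serre's conjecture to realise $\clos{\rho}$ at the optimal level $N(\clos{\rho})$, which is prime to $p$. The statement that the optimal level in the refined conjecture is prime to $p$ --- equivalently, that one can always strip $p^r$ from the level of a modular $\clos{\rho}$ --- \emph{is} the theorem you are asked to prove, and historically it is one of the inputs (alongside Ribet's level lowering at primes $\ell\neq p$ and Edixhoven's weight optimisation) used to establish that refinement. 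You have reduced the statement to itself. The proof Ribet actually gives in \cite{Rib94} is a direct argument with mod $p$ modular forms and the geometry of modular curves in characteristic $p$: after a twist reducing $\Gamma_1(N_0p^r)$ to $\Gamma_1(N_0)\cap\Gamma_0(p^r)$, one shows that every mod $p$ Hecke eigensystem at that level already occurs at $\Gamma_1(N_0)\cap\Gamma_0(p)$ (via oldforms and the action of $U_p$ in characteristic $p$), and then descends from $\Gamma_1(N_0)\cap\Gamma_0(p)$ to $\Gamma_1(N_0)$ using the Hasse invariant, the $\theta$-operator, and the structure of the special fibre of the modular curve at $p$; Buzzard's and Hatada's arguments at $p=2$ are in a similar spirit but must work around features peculiar to $p=2$. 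Your proposed alternative via Hida theory only treats the $p$-ordinary case. In short, to make your proof non-circular you would need to replace the appeal to the refined Serre conjecture by an actual argument for the level-stripping step along these lines.
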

For $p \geq 3$, this was shown by Ribet (\cite{Rib94}, Theorem 2.1). For $p = 2$, this was shown independently by Buzzard (\cite{Buz00}) and by Hatada \cite{Hat01}.

In the course of their study of this problem, Chen, Kiming and Wiese (\cite{CKW13}) introduced the following three progressively weaker notions of eigenforms mod $p^n$: strong eigenforms, weak eigenforms, and dc-weak eigenforms, where ``dc" stands for ``divided congruence". Roughly, strong eigenforms are reductions mod $p^n$ of Hecke eigenforms in characteristic 0, while weak eigenforms are reductions of modular forms mod $p^n$ that behave like Hecke eigenforms after reduction. A dc-weak eigenform is defined in a similar manner by extending the action of Hecke operators to the space of divided congruences, first introduced by Katz (\cite{Kat75}), which consists of sums $\sum f_i$ of modular forms, not necessarily with integral coefficients, such that $\sum f_i$ does have integral $q$-expansion. These are also instances of Katz' generalised $p$-adic modular functions. The notions of strong, weak, and dc-weak coincide when $n = 1$, but they are distinct when $n > 1$. A discussion of how these notions compare is given at the end of Section 3. 

The authors of \cite{CKW13} show that one can attach to each eigenform in one of these classes a Galois representation with the natural desired properties (i.e., the trace of the image of Frobenius is related to the coefficients of the $q$-coefficients of the eigenform). 

The dc-weak eigenforms provide the context for a Ribet-type level lowering result mod $p^n$ for modular Galois representations modulo $p^n$. 

\begin{theorem}[\cite{CKW13}, Theorem 5] Let $f$ be a mod $p^n$ dc-weak eigenform of level $N p^r$. Assume that the residual representation $\rho_{f,p,1}$ is absolutely irreducible. Also assume $p \geq 5$. Then there exists a mod $p^n$ dc-weak eigenform $g$ of level $N$ such that $\rho_{f,p,n} \cong \rho_{g,p,n}$.
\end{theorem}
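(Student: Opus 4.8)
The plan is to imitate the classical argument (the case $n=1$, which for $p\geq 3$ is the level-stripping theorem of Ribet quoted above), with the geometry of modular curves modulo $p$ replaced by Katz's theory of $p$-adic modular functions and the flexibility of divided congruences used to carry the descent out modulo $p^n$. Throughout, for a level $L$ let $\mathcal D_L$ denote the space of divided congruences of level $L$ with coefficients in $\Zmodbar{p}{n}$, and write $M$ for the prime-to-$p$ part of $Np^r$. Since $\rho_{f,p,1}$ is absolutely irreducible, Carayol's theorem together with the Chebotarev density theorem shows that $\rho_{f,p,n}$ is determined up to isomorphism by its traces $a_\ell$ at the Frobenius elements $\Frob_\ell$, $\ell\nmid Np$; so it suffices to produce a dc-weak eigenform $g$ of level $N$ over $\Zmodbar{p}{n}$ whose $T_\ell$-eigenvalue is $a_\ell$ for almost all $\ell$. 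Unwinding the definition of ``dc-weak'', fix a ring of integers $\sh{O}$ of a finite extension of $\Q[p]$ large enough that $f$ is the reduction modulo $p^n$ of a divided congruence $F=\sum_i f_i$, where each $f_i$ is a classical modular form of weight $k_i$ on $\Gamma_1(Np^r)$ over $\sh{O}$, $F$ has integral $q$-expansion, and $F$ is a simultaneous Hecke eigenform modulo $p^n$ whose $T_\ell$-eigenvalue reduces to $a_\ell$.

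By Katz, every classical modular form on $\Gamma_1(Np^r)$ --- and likewise every one on $\Gamma_1(N)$ --- is a generalized $p$-adic modular function of tame level $M$; hence $f$ defines an eigenvector, with the prescribed eigenvalues for the prime-to-$p$ Hecke operators, in the space $V_M$ of generalized $p$-adic modular functions of tame level $M$ over $\Zmodbar{p}{n}$. By Katz's density theory, $V_M=\varinjlim_s \mathcal D_{Mp^s}$, so the eigensystem $(a_\ell)_{\ell\nmid Np}$ occurs somewhere in this direct limit.

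The crux is to descend this eigensystem to $\mathcal D_N$, that is, to show that $(a_\ell)_{\ell\nmid Np}$, which is non-Eisenstein because $\rho_{f,p,1}$ is absolutely irreducible, already occurs at level $N$. I would argue in two stages. First, reducing modulo $p$ and applying mod $p$ level stripping (Ribet, valid since $p\geq 5>3$), the residual representation $\rho_{f,p,1}$ is realized by a mod $p$ eigenform of level $N$. Second, one promotes this to a dc-weak eigenform of level $N$ modulo $p^n$ with eigenvalues exactly $(a_\ell)_{\ell\nmid Np}$, by comparing the action of the prime-to-$p$ Hecke algebra on $V_M$ with its action on $\mathcal D_N$ after localizing at the non-Eisenstein maximal ideal cut out by $\rho_{f,p,1}$: the point is that passing to divided congruences of level $N$ absorbs the whole level structure at $p$ as far as prime-to-$p$ eigensystems modulo $p^n$ are concerned --- although, crucially, \emph{not} at the level of the modular forms themselves, since a form with wild nebentypus at $p$ is not congruent modulo $p^n$ to any form of level prime to $p$. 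It is here that the flexibility of divided congruences (sums of modular forms of varying, a priori unbounded, weight --- the phenomenon studied in the present paper) is essential, and where the argument genuinely departs from the case $n=1$. Granting this comparison, $(a_\ell)_{\ell\nmid Np}$ cuts out a maximal ideal of the mod $p^n$ Hecke algebra acting on $\mathcal D_N$ whose eigenspace is nonzero, and any eigenform $g$ in it is a dc-weak eigenform of level $N$ satisfying $\rho_{g,p,n}\cong\rho_{f,p,n}$.

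I expect the second stage of the descent to be the main obstacle: controlling exactly which non-Eisenstein prime-to-$p$ eigensystems modulo $p^n$ survive the removal of the $p$-part of the level. Modulo $p$ this is governed by the geometry of the Igusa curves and the structure of mod $p$ modular forms of level $Np$; modulo $p^n$ one needs the analogue for Katz's generalized $p$-adic modular functions, and it is here that absolute irreducibility of $\rho_{f,p,1}$ enters essentially --- it rules out the Eisenstein locus, on which level-lowering at $p$ can genuinely fail.
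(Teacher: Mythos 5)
The paper does not prove this theorem --- it is quoted from \cite{CKW13} --- so there is no internal proof to compare against; I compare your sketch to the known argument, part of which the present paper records in \cref{noplevel}. The correct proof is shorter and formally cleaner than what you propose. By Proposition~I.3.9 of Gouv\^ea (a consequence of Katz's theory), the $p$-adic closures of $D(N_0p^s,\Z[p])$ in $\Z[p][\![q]\!]$ coincide for all $s\geq 0$, where $N_0$ is the prime-to-$p$ part of the level. Since $D(N_0p^s,\Z[p])$ is $p$-saturated in $\Z[p][\![q]\!]$, already $D(N_0p^s,\Z[p])\otimes\Zmod{p}{n}$ is independent of $s$, and so the prime-to-$p$ Hecke algebras $\set{T}^{pf}$ at levels $Np^r$ and $N$ are canonically and topologically isomorphic. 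A dc-weak eigenform mod $p^n$ of level $Np^r$ is then a continuous homomorphism $\set{T}^{pf}(Np^r)\to\Zmodbar{p}{n}$ factoring through a finite weight; transporting it across the isomorphism gives a continuous homomorphism $\set{T}^{pf}(N)\to\Zmodbar{p}{n}$, which by discreteness of the target factors through $\set{T}^{pf}_{w'}(N)$ for some $w'$ (cf.\ \cref{pairinglimit}), i.e.\ a dc-weak eigenform $g$ of level $N$ mod $p^n$ with the same prime-to-$p$ eigenvalues. Absolute irreducibility of $\rho_{f,p,1}$ enters only at the very end, via Carayol and Chebotarev, to promote equality of traces at Frobenius to an isomorphism $\rho_{f,p,n}\cong\rho_{g,p,n}$.

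Your proposal identifies the right ambient objects but breaks down at the decisive step. A small error first: $V_M$ is not $\varinjlim_s\mathcal D_{Mp^s}$; it is the $p$-adic \emph{completion} of any single $\mathcal D_{Mp^s}$, and the relevant fact is that all these closures agree. More seriously, your two-stage descent --- mod-$p$ level lowering \`a la Ribet, followed by a ``promotion'' modulo $p^n$ --- is both misdirected and incomplete. The Ribet-type geometric machinery (Igusa curves, multiplicity one, the Eisenstein locus) plays no role in level lowering for divided congruences; the passage from level $Np^r$ to level $N$ at the level of $\set{T}^{pf}$ is an unconditional Hecke-algebra identification with no Eisenstein obstruction, so the worry you raise in your final paragraph is a red herring. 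And the step you explicitly flag as ``the main obstacle'' and dispose of with ``granting this comparison'' is precisely the content of the theorem, so as written nothing has been established. Finally, you misplace the role of absolute irreducibility: it is not needed to lower the level, only to pass from agreement of Hecke eigenvalues to an isomorphism of the associated Galois representations.
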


Furthermore, it is asked in \cite{CKW13} whether, in the case that $f$ is a strong eigenform, $g$ can also be taken to be a strong eigenform. To be able to investigate this question experimentally one needs some sort of bound on the weights in which strong eigenforms arise. This leads us to the ``weight" aspect, which we will discuss next.

When $n = 1$, we have the following classical ``weight bound" result of Jochnowitz\footnote{This is a generalisation of an unpublished argument of Tate and Serre in level $1$.}.

\begin{theorem}[\cite{Joc82}] Suppose $f$ is a (characteristic $0$) eigenform of level $N$. Then there exists a (characteristic $0$) eigenform $g$ of level $N$ and weight at most $p^2 + p$ such that $f \equiv g \pmod{p}$. 
\end{theorem}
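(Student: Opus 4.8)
The plan is to reduce $f$ modulo $p$, reformulate the assertion as a bound on the filtration of the resulting mod $p$ eigenform, prove that bound with the theta operator, and then lift back to characteristic $0$.

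First I would fix a prime above $p$ and reduce the $q$-expansion of $f$: this gives a mod $p$ modular form $\bar f$ of level $N$ that is an eigenform for all Hecke operators and that records the congruence class of $f$ modulo $p$. If $p\mid N$ one first strips the $p$-power from the level by the theorem of Buzzard, Hatada and Ribet quoted above (or else works throughout with Katz' generalised $p$-adic modular functions), so we may assume $p\nmid N$. Let $k=w(\bar f)$ be the \emph{filtration} of $\bar f$, the least weight in which its $q$-expansion is realised by a mod $p$ modular form of level $N$. It suffices to show $k\le p^2+p$: then $\bar f$ is realised by a mod $p$ eigenform $h$ of weight $k$; multiplying by a suitable power of $E_{p-1}$, whose $q$-expansion is $\equiv 1\pmod p$, we may arrange $2\le\operatorname{wt}(h)\le p^2+p$ (using $p\ge 5$) without changing the $q$-expansion mod $p$; and the Deligne--Serre lifting lemma (for an Eisenstein eigenform the analogous statement is immediate) produces a characteristic $0$ eigenform $g$ of level $N$ and weight $\operatorname{wt}(h)\le p^2+p$ with $g\equiv h\equiv\bar f\equiv f\pmod p$.

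The core is therefore the inequality $w(\bar f)\le p^2+p$, which I would deduce from the theory of the theta operator $\theta=q\,\frac{d}{dq}$ on mod $p$ modular forms (Serre, Swinnerton-Dyer, Katz): $\theta$ raises weights by $p+1$, one has $w(\theta g)\le w(g)+p+1$ with equality if and only if $p\nmid w(g)$, and $\theta^{p}=\theta$ on $q$-expansions, so the $\theta$-orbit of $\bar f$ enters a cycle of length dividing $p-1$. All filtrations in that cycle lie in one residue class mod $p-1$, and since a full loop returns them to their starting values, the ``descents'' occurring at the indices $i$ with $p\mid w(\theta^i\bar f)$ sum to $(p-1)(p+1)$. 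One then runs through Jochnowitz's classification of the possible shapes of such cycles: the Eisenstein case; the cuspidal \emph{non-ordinary} case ($a_p\equiv 0$, so $\bar f=\theta^{p-1}\bar f$ already lies in the image of $\theta$ and sits on a cycle with a single descent); and the cuspidal \emph{ordinary} case ($a_p\ne 0$, so $\bar f$ is not in the image of $\theta$, and one instead analyses the cycle through $\theta\bar f$, which has two descents, and separately bounds the ``Verschiebung part'' $\bar f-\theta^{p-1}\bar f$, supported on the powers $q^{p^j}$, expressing it via $\theta^{p-1}\bar f$ and $V_p$ and realising it as a genuine mod $p$ form). Bounding the minimal filtration in the relevant cycle (its ``low point'') and adding the at most $p-2$ ascents of size $p+1$ then yields $w(\bar f)\le p^2+p$; the slack over the non-ordinary bound $p^2-1$ is exactly what the ordinary Verschiebung part costs.

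The main obstacle is this theta-cycle analysis, especially in the ordinary case: one must pin down the low point of the cycle, the exact sizes and number of the descents, and the cost of re-expressing the $V_p$-part as an honest mod $p$ modular form. Everything else --- the Deligne--Serre lift, multiplication by $E_{p-1}$ to reach weight $\ge 2$, the Eisenstein case, and the reduction to $p\nmid N$ --- is routine given the results already quoted.
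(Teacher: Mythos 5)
The paper does not prove this statement at all: it is quoted as a classical result of Jochnowitz with a bibliographic citation to \cite{Joc82}, so there is no ``paper's own proof'' to match against. What you have sketched is indeed the standard Tate--Serre--Jochnowitz theta-cycle argument, and the structure (reduce mod $p$, bound the filtration via the cycle $\theta\bar f,\theta^2\bar f,\dots$, use $w(\theta g)\le w(g)+p+1$ with equality iff $p\nmid w(g)$ and periodicity $\theta^p=\theta$, then lift by Deligne--Serre) is the right one, with the case split into Eisenstein, ordinary cuspidal, and non-ordinary cuspidal exactly as in Jochnowitz's paper.

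One caveat worth flagging in the preliminary reduction: the Buzzard--Hatada--Ribet theorem, in the form quoted in this paper, yields an isomorphism of mod $p$ Galois representations $\rho_{f,p,1}\cong\rho_{g,p,1}$, which by Chebotarev gives $a_\ell(f)\equiv a_\ell(g)\pmod p$ only for primes $\ell\nmid Np$. The statement you need is the stronger coefficient-wise congruence of $q$-expansions, so the coefficients at $p$ and at the primes dividing $N$ require separate handling (e.g.\ via the $U_p$/$T_p$ relation for ordinary versus non-ordinary forms and a local analysis at $\ell\mid N$); alternatively one works directly in $\Gamma_1(Np^r)$ within Katz's theory, as you parenthetically suggest. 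This is a known technical point rather than a fatal gap, but it should not be waved away as a direct corollary of the level-stripping theorem as stated.
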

In the above theorem, the congruence between the two eigenforms means that their $q$-expansions are coefficient-wise congruent mod $p$. This result shows that there are only finitely many congruence classes of mod $p$ eigenforms of level $N$, and hence only finitely many isomorphism classes of modular mod $p$ Galois representations of conductor $N$. This allows us to study mod $p$ Galois representations experimentally (see for example \cite{CG13}). Moreover, Jochnowitz' result was used by Edixhoven (\cite{Edi92}) to prove that the weaker form of Serre's modularity conjecture implies the stronger form of the same conjecture. 

In \cite{KRW16}, the author together with Kiming and Wiese showed, using an idea of Frank Calegari, the existence of a uniform weight bound on strong eigenforms mod $p^n$. More precisely, the result is the following.

\begin{theorem}[\cite{KRW16}]There exists a constant $\kappa(N, p, n)$ depending only on $N$, $p$, and $n$ such that any eigenform in characteristic $0$ of level $N$ is congruent mod $p^n$ to a modular form of level $N$ of weight at most $\kappa(N, p, n)$. 
\end{theorem}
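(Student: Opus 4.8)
The plan is to deduce the theorem from the following finiteness statement: \emph{the set of $q$-expansions modulo $p^n$ of (characteristic $0$) eigenforms of level $N$, of arbitrary weight, is finite}. This suffices: if these $q$-expansions are the reductions of eigenforms $f_1,\dots,f_M$ of weights $k_1,\dots,k_M$, put $\kappa(N,p,n)=\max_i k_i$; then any eigenform $f$ of level $N$ has the same reduction modulo $p^n$ as some $f_i$, so $f\equiv f_i\pmod{p^n}$ with $f_i$ of level $N$ and weight $k_i\le\kappa(N,p,n)$.

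To prove the finiteness statement I would induct on $n$. The case $n=1$ is classical: by Jochnowitz's theorem (\cite{Joc82}) every eigensystem modulo $p$ of level $N$ occurs in weight at most $p^2+p$, and $\bigoplus_{k\le p^2+p}M_k(\Gamma_1(N),\Fc[p])$ is finite-dimensional, so there are only finitely many eigensystems, hence finitely many $q$-expansions, modulo $p$. For the inductive step, attach to each eigenform $f$ of level $N$ and weight $k$ its mod $p^n$ Galois representation $\rho_{f,p,n}\colon G_\Q\to\GL_2(\Zmodbar{p}{n})$: it is unramified outside $Np$, reduces modulo $p^{n-1}$ to $\rho_{f,p,n-1}$, has determinant $\chi^{\,k-1}\varepsilon_f$ (with $\chi$ the cyclotomic character and $\varepsilon_f$ the nebentypus), and satisfies $\mathrm{tr}\,\rho_{f,p,n}(\Frob_\ell)=a_\ell(f)$ for $\ell\nmid Np$; together with the $U_p$-eigenvalue $a_p(f)$ this data pins down the $q$-expansion of $f$ modulo $p^n$. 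By the inductive hypothesis there are finitely many possibilities for $\rho_{f,p,n-1}$; and since $\chi$ modulo $p^n$ has image of order dividing $p^{n-1}(p-1)$, the determinant $\chi^{\,k-1}\varepsilon_f\bmod p^n$ depends only on $k$ modulo $p^{n-1}(p-1)$ and on the finitely many nebentypus characters of conductor dividing $N$, so there are finitely many possible determinants. It then remains to bound the number of mod $p^n$ reductions of eigenforms lying above a fixed $\rho_{f,p,n-1}$ with a fixed determinant.

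This last step is the main obstacle, and it is where the idea of Calegari enters. The key mechanism is that the $q$-expansion of $E_{p-1}^{\,p^{n-1}}$ is congruent to $1$ modulo $p^n$ (because $E_{p-1}\equiv 1\pmod p$), so multiplication by $E_{p-1}^{\,p^{n-1}}$ gives Hecke-equivariant inclusions $M_k(\Gamma_1(N),\Zmod{p}{n})\hookrightarrow M_{k+p^{n-1}(p-1)}(\Gamma_1(N),\Zmod{p}{n})$, and hence, over the weights in a fixed class modulo $p^{n-1}(p-1)$, a tower of surjective maps of mod $p^n$ Hecke algebras $\cdots\to\mathbf{T}_{k+p^{n-1}(p-1)}\to\mathbf{T}_k$. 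One wants this tower to stabilise — equivalently, that the ``full'' mod $p^n$ Hecke algebra of level $N$ (over all weights, or over a fixed weight class) is a \emph{finite} $\Zmod{p}{n}$-algebra; a finite ring has finitely many maximal ideals, hence gives finitely many eigensystems modulo $p^n$, which closes the induction. I expect this finiteness to be the technical heart: the analogous assertion fails both in characteristic $0$ and for the all-weights mod $p$ Hecke algebra, so it genuinely exploits the combination of working modulo $p^n$ and keeping the level fixed. The remaining points to verify would be the reducible (Eisenstein) residual case, where $\rho_{f,p,n}$ need not be determined by its traces and one instead argues that its constituents are characters unramified outside $Np$ whose cyclotomic part is determined modulo $p^{n-1}(p-1)$ — again finitely many — together with the bookkeeping of coefficient rings and of the $U_p$-eigenvalues.
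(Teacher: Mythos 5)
Your proposal reduces the theorem to a strictly stronger statement that the paper explicitly identifies as an \emph{open problem}. Right after quoting this theorem, the paper says: ``it was conjectured in \cite{KRW16} that there are only finitely many congruence classes of strong eigenforms mod $p^n$ for any $n$. This is still an open problem and it is related to work of Buzzard (\cite{Buz05}) regarding slopes of modular forms.'' That conjecture is precisely your finiteness statement (the set of $q$-expansions mod $p^n$ of characteristic-$0$ eigenforms of level $N$ is finite). Note also that the theorem is genuinely weaker than finiteness: it only asserts that $f$ is congruent mod $p^n$ to some \emph{modular form} of bounded weight, not to an eigenform; your deduction in the first paragraph would, if it worked, give congruence to an eigenform $f_i$ of bounded weight, i.e.\ finiteness of eigensystems, which is exactly what is not known. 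So the strategy as stated proves the theorem only conditionally on an unresolved conjecture, while the KRW16 argument (Calegari's idea) establishes the weight bound unconditionally by a different route that never needs to count eigensystems.

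The specific mechanism you identify as the ``technical heart'' is also directly contradicted by the main result of the present paper. You hope that the mod $p^n$ Hecke algebra over all weights (or over a fixed residue class of weights mod $p^{n-1}(p-1)$) is a \emph{finite} $\Zmod{p}{n}$-algebra, so that the tower $\cdots\to\mathbf{T}_{k+p^{n-1}(p-1)}\to\mathbf{T}_k$ stabilises. But the paper's Theorem \ref{mainthm} produces, for every $n\geq 2$ and every $d$, a dc-weak eigenform mod $p^n$ whose weight filtration exceeds $d$; equivalently, the maps $\set{T}_{w'}(\Zmod{p}{n})\to\set{T}_w(\Zmod{p}{n})$ never become isomorphisms, and the limit $\clos{\set{T}}=\set{T}(\Zmod{p}{n})$ (and likewise $\sh{H}\cong\set{T}$ by Proposition \ref{teqh}) is of infinite $\Zmod{p}{n}$-rank. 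So the finiteness of the Hecke algebra fails, and the inductive step as you describe it cannot close. What is true, and what the paper establishes instead (Theorem \ref{cnsl}, Proposition \ref{comptop}), is that $\clos{\set{T}}$ is Noetherian and semilocal with only finitely many maximal ideals; that is far weaker than finiteness of the ring, and it is compatible with the existence of infinitely many dc-weak eigensystems. Salvaging your induction would require bounding \emph{strong} eigensystems specifically among all ring homomorphisms $\clos{\set{T}}\to\Zmodbar{p}{n}$, which is exactly the hard open problem, tied to unresolved questions about slopes.
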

In fact, it was conjectured in \cite{KRW16} that there are only finitely many congruence classes of strong eigenforms mod $p^n$ for any $n$. This is still an open problem and it is related to work of Buzzard (\cite{Buz05}) regarding slopes of modular forms.

It is therefore interesting to ask whether such a weight bound exists for weak and dc-weak eigenforms as well. The main purpose of this work is to show that no such bound exists for dc-weak eigenforms. In particular, if we denote by $D_w(R)$ the space of divided congruences of level $N$ with coefficients in $R$ and weight at most $w$ (see \cref{dcsection} for the definition), then we have the following result.

\newtheorem*{mainthm}{Theorem \ref{mainthm}}
\begin{mainthm} Suppose $p \geq 13$. Then for all integers $n\geq 2$, $d \geq 0$, there exists a mod $p^n$ dc-weak eigenform $f \in D_{w}(\Zmodbar{p}{n})$, such $w > d$ and $f \not \in D_{w'}(\Zmodbar{p}{n})$ for any $w' < w$. 
\end{mainthm}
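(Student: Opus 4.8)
The plan is to exhibit, for each $d$, an explicit dc-weak eigenform mod $p^n$ whose minimal filtration (the smallest $w$ with $f \in D_w(\Zmodbar{p}{n})$) exceeds $d$. The natural source of such forms is the interplay between the Hasse invariant/Eisenstein series $E_{p-1} \equiv 1 \pmod p$ and the failure of this congruence to lift mod $p^n$: multiplying a fixed modular form by powers of $E_{p-1}$ changes the weight by large amounts while changing the $q$-expansion only by something divisible by $p$, hence mod $p^2$ (and mod $p^n$) one can produce genuinely new forms in arbitrarily high weight that are Hecke eigen on the nose in the divided-congruence sense. Concretely, I would start from a characteristic-zero eigenform $g$ of level $N$ (say a suitable Eisenstein series or a cusp form) and consider divided congruences of the shape $\frac{1}{p}(E_{p-1}^{a} - 1) g$ or, more robustly, combinations $\sum_i c_i E_{p-1}^{a_i} g_i$ engineered so that the sum has integral $q$-expansion and is an eigenform mod $p^n$ for all Hecke operators $T_\ell$, $\ell \nmid Np$, and $U_p$. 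The key point is that the operator $U_p$ kills the ``low weight'' part while the $E_{p-1}$-twist injects weight, so the filtration is forced up.

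The key steps, in order: (1) recall the structure of $D_w(R)$ as a Hecke module and the fact, from \cite{CKW13}, that dc-weak eigenforms mod $p^n$ correspond to Hecke eigensystems occurring in $D_w(\Zmodbar{p}{n})$ for some $w$; (2) fix a concrete eigenform — the weight-$k_0$ level-$N$ form whose eigensystem we want to realize in high weight — and write down the candidate divided congruence $f_a$ depending on a parameter $a$ (the power of $E_{p-1}$), checking that it lies in $D_w(\Zmodbar{p}{n})$ with $w = k_0 + a(p-1)$ and that it is a dc-weak eigenform by computing its Hecke action directly from $q$-expansions; (3) prove the filtration lower bound: show that $f_a \notin D_{w'}(\Zmodbar{p}{n})$ for any $w' < w$. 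For step (3) I would argue by contradiction — if $f_a$ had a representative of lower weight, then reducing mod $p$ (or applying $\theta$-operator / $U_p$-stability arguments à la Jochnowitz–Serre) would force a congruence contradicting the minimality of the filtration of the mod $p$ reduction, which one controls using the classical theory of filtrations of mod $p$ modular forms (the filtration of $E_{p-1}^a g \bmod p$ is exactly $k_0 + a(p-1)$ when $p \nmid$ [something], and multiplication by $E_{p-1}$ strictly raises filtration unless the form is $\equiv 0$). The hypothesis $p \geq 13$ presumably enters here to guarantee $E_{p-1}$ has the needed non-vanishing/filtration-raising properties and that the relevant weight-space arithmetic ($p - 1 > $ small constants) works; (4) conclude that since $a$ can be taken arbitrarily large, $w = k_0 + a(p-1) > d$ for $a \gg 0$, giving the theorem.

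The main obstacle I expect is step (3), the filtration lower bound: it is easy to write down a dc-weak eigenform of large weight, but showing it is not secretly of small weight requires ruling out all lower-weight representatives in the divided-congruence space mod $p^n$, which is subtler than the mod $p$ case because $D_w(\Zmodbar{p}{n})$ is not simply a reduction of a characteristic-zero space and the filtration theory mod $p^n$ is not as clean. I anticipate the argument will hinge on carefully separating the ``mod $p$ layer'' (where classical filtration theory applies) from the higher layers, perhaps by applying the operator that multiplies by $p^{n-1}$ or by using a $\theta$-cycle argument adapted to the divided-congruence setting; making this separation rigorous, and identifying exactly which eigenform $g$ and which auxiliary modular forms $g_i$ make the $q$-expansion integral while keeping the eigenform property mod $p^n$, is where the real work lies. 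A secondary technical point is ensuring the construction respects level $N$ throughout (the $E_{p-1}$ twists are level $1$, so this is automatic, but if a $U_p$ or $V_p$ operator is used to fix integrality one must track the level carefully).
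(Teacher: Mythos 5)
Your proposal does not follow the paper's strategy, and more importantly it contains a gap that I believe is fatal as written. The fundamental problem is the periodicity $E_{p-1}^{p^{n-1}} \equiv 1 \pmod{p^n}$, which the paper explicitly remarks on right after \cref{def1}. Because of this congruence, the image of $E_{p-1}^{a} g$ (or of $\frac{1}{p}(E_{p-1}^a - 1)g$) in $D(\Zmodbar{p}{n})$ depends only on $a$ modulo a fixed power of $p$, so it ranges over finitely many elements of $D(\Zmodbar{p}{n})$ as $a$ grows. The weight filtration, being intrinsic to the element of $D(\Zmodbar{p}{n})$ rather than to any particular representative, is therefore bounded: you never break past a fixed ceiling no matter how large you take $a$. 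The nominal weight $k_0 + a(p-1)$ in which you wrote the form down is not the weight filtration. This is not a technicality to be patched but a structural obstruction to the whole $E_{p-1}$-twisting approach. There is also a second, independent problem: products of eigenforms are not eigenforms (Hecke operators do not commute with multiplication by $E_{p-1}^a$), so your candidate forms are not dc-weak eigenforms to begin with, and your fallback of ``engineered combinations $\sum_i c_i E_{p-1}^{a_i} g_i$'' has no content until you actually produce the engineering, which you acknowledge you have not done.

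The paper's proof is of an entirely different nature and hinges on two ideas absent from your proposal. First, via the duality between $D_w$ and $\set{T}_w$ and a base-change theorem, dc-weak eigenforms mod $p^n$ are identified with continuous $\Z[p]$-algebra homomorphisms $\varphi\colon \set{T}_{\idl{m}} \to \Zmodbar{p}{n}$, and two filtrations are introduced: the weight filtration $\omega(\varphi)$ and the nilpotence filtration $\nu(\varphi) = \min\{t : \varphi(\clos{\idl{m}}^t) = 0\}$. Because the profinite and $\clos{\idl{m}}$-adic topologies on $\Tmbar$ coincide (\cref{comptop}), these filtrations control one another (\cref{compfilt}), so it suffices to make $\nu$ large. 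Second, using Gouv\^ea--Mazur/Yamagami plus Deo's result, the local Hecke algebra $\set{T}_{\idl{m}}$ attached to $\theta^{p-1}\clos{\Delta}$ is a finite integral extension of $\Z[p][\![X,Y,Z,U]\!]$ (\cref{heckeps}). One then simply builds $\varphi$ by sending a power-series parameter to a uniformiser $\pi$ of a finite extension $\sh{O}$ with ramification index $e \gg 0$; then $\varphi(\idl{m}^e) \ni \pi^e = p$, which is nonzero in $\sh{O}/\pi^{e(n-1)+1}$ precisely because $n \geq 2$, forcing $\nu(\varphi) > e$. This is where the hypothesis $n \geq 2$ is essential and where ramification of $\Zmodbar{p}{n}$ does the real work, whereas your construction never leaves coefficient rings of bounded ramification and hence cannot detect this phenomenon. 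Finally, the hypothesis $p \geq 13$ is not, as you speculated, about non-vanishing of $E_{p-1}$; it is what guarantees $\clos{\rho}_\Delta$ (or $\clos{\rho}_{\Delta_{16}}$ for $p = 691$) is absolutely irreducible with unobstructed deformation problem, which is needed for the power-series structure of the Hecke algebra.
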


The strategy is as follows. First, we introduce the space of divided congruences of level $N$ and coefficients in a $\Z[p]$-algebra $R$, and establish a base change property for this space. Then, we describe the action of the Hecke operators on divided congruences, and the Hecke algebra they generate. When $R$ is finitely generated as a $\Z[p]$-module, we establish a duality between the space divided congruences and the corresponding Hecke algebra. This gives us a base change property for the Hecke algebra $\set{T}$, and allows us to identify dc-weak eigenforms with coefficients in $R$ with certain $\Z[p]$-algebra homomorphisms $\mor{\set{T}}{R}$. 

Next, we introduce the notions of weight filtration and nilpotence filtration. The weight filtration of a divided congruence is the lowest weight in which it appears. The nilpotence filtration, first introduced by Khare (\cite{Kha98}), is defined in terms of the nilpotence of the action of the Hecke operators on the divided congruence under consideration. Nilpotence of Hecke operators is also the main tool used in \cite{Med15} to investigate Hecke algebras attached to mod $p$ modular forms. While the weight filtration is easier to handle on a computer, the nilpotence filtration is somehow easier and more natural to study from a theoretical point of view. For divided congruences mod $p^n$, we establish a comparison result between the weight filtration and the nilpotence filtration. In particular, we show that controlling one of these filtrations means controlling the other. Finally, we produce a situation in which the Hecke algebra $\set{T}$ is explicitly known (and is a finite integral extension of a power series ring). This allows us to write continuous maps $\mor{\set{T}}{R}$ with arbitrarily large nilpotence filtrations, concluding the proof. 

It would be interesting to answer the question of existence of weight bounds for weak eigenforms, but this seems to be a harder problem. The first difficulty lies in that the Hecke algebra corresponding to the space $S(R)$ (which consists of sums of classical modular forms with coefficients in $R$, see \cref{dcsection}) is more difficult to control. There is however another difficulty. Even if one knows explicitly the structure of the Hecke algebra of $S(R)$, the above construction, which works for dc-weak eigenforms, would a priori produce only eigenforms which are sums of modular forms of different weights. We should therefore find a criterion that ensures that a dc-weak eigenform in $S(R)$ is weak, i.e., that it lives in a single weight. We present a result in this direction, which is a strengthening of Proposition 25 in \cite{CKW13}.

\newtheorem*{dcisweak}{Theorem \ref{dcisweak}}
\begin{dcisweak} Let $R$ be a quotient of $\sh{O}$ that contains $\Zmod{p}{n}$. Let $f \in S(R)$ be a (normalised) dc-weak eigenform with coefficients in $R$. Then $f$ is weak if and only if the eigenvalue of $f$ under the action of $\gamma = 1+p \in \units{\Z[p]}$ lies in $\Zmod{p}{n}$. 
\end{dcisweak}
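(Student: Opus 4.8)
The plan is to unravel ``weak'' as ``concentrated in a single weight'' and then recover the weight from the eigenvalue of $\gamma$ via the weight action of $\units{\Z[p]}$ on $S(R)$. Recall that $f\in S(R)$ is weak precisely when it is the reduction of a classical modular form of a single weight, i.e.\ when it lies in a single weight in the sense of the weight filtration. One implication is then immediate: if $f$ is weak of weight $k$, then $\gamma=1+p$ acts on it through the scalar $(1+p)^k$ (the weight part of the diamond action), so the $\gamma$-eigenvalue of $f$ is the image of $(1+p)^k\in 1+p\Z[p]$ in $R$, which lies in the subring $\Zmod{p}{n}$.

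For the converse, let $\lambda$ be the $\gamma$-eigenvalue and assume $\lambda\in\Zmod{p}{n}$. Since $R$ is a quotient of $\sh{O}$ containing $\Zmod{p}{n}$ it is killed by $p^n$, and because $\gamma^{p^{n-1}}=(1+p)^{p^{n-1}}\equiv 1\pmod{p^n}$ the operator $\gamma^{p^{n-1}}$ acts as the identity on all of $S(R)$; hence $\lambda^{p^{n-1}}=1$. The $p^{n-1}$-th roots of unity in $\Zmod{p}{n}$ are exactly the elements $(1+p)^k$, $k\in\Z$, so $\lambda=(1+p)^k$ for some integer $k$. I would then use that $f$, being a normalised dc-weak eigenform, is in particular an eigenvector for the weight action of $\units{\Z[p]}$, with some eigencharacter $\Theta\colon\units{\Z[p]}\to R^\times$ satisfying $\Theta(\gamma)=(1+p)^k$. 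Split $\units{\Z[p]}=\mu_{p-1}\times(1+p\Z[p])$. The restriction $\Theta|_{1+p\Z[p]}$ is determined by its value at the topological generator $\gamma$, which is a root of unity of order dividing $p^{n-1}$, so $\Theta|_{1+p\Z[p]}$ factors through $(1+p\Z[p])/(1+p^n\Z[p])$ and agrees there with $z\mapsto z^k$. The restriction $\Theta|_{\mu_{p-1}}$ takes values in the group of $(p-1)$-th roots of unity of $R$, which --- $R$ being a local quotient of $\sh{O}$ with finite residue field of characteristic $p$ --- has order exactly $p-1$ and is contained in $\Zmod{p}{n}$; hence $\Theta|_{\mu_{p-1}}$ is $z\mapsto z^{k'}$ for some integer $k'$. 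Choosing, by the Chinese Remainder Theorem, an integer $m$ with $m\equiv k\pmod{p^{n-1}}$ and $m\equiv k'\pmod{p-1}$ (and as large as convenient), we get $\Theta=(z\mapsto z^m)$. Finally, invoking the structure of $S(R)$ under the weight action --- the level $Np^r$ analogue of Katz's description of divided congruences mod $p^n$, as set up in \cref{dcsection} --- the $(z\mapsto z^m)$-eigenspace of $S(R)$ is exactly the image of the classical modular forms of weight $m$ with coefficients in $R$. Thus $f$ lies in the single weight $m$, and as it already carries a system of Hecke eigenvalues it is a weak eigenform.

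The step I expect to be the main obstacle is the structural input invoked at the end: that for every classical weight $m$ the $m$-th weight eigenspace of $S(R)$ contains nothing but reductions of classical weight-$m$ forms --- equivalently, that a weight-eigen divided congruence whose eigencharacter is of classical type is concentrated in a single weight. This, and the base-change behaviour of $S(R)$ underlying it, is where the genuine content lies; the remaining ingredients (identifying $p^{n-1}$-th roots of unity in $\Zmod{p}{n}$, counting $(p-1)$-th roots of unity in $R$, and the congruence bookkeeping) are routine. It is precisely here that the hypothesis $\lambda\in\Zmod{p}{n}$ is essential: it forces the wild part $\Theta|_{1+p\Z[p]}$ to be of classical type. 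Were $\Theta(\gamma)$ instead a $p^j$-th root of unity with $j\geq n$ living in a suitably ramified $R$, no integer weight would match $\Theta$, and $f$ would genuinely be spread over infinitely many weights --- which is exactly the mechanism behind the unbounded-weight examples produced in the main theorem.
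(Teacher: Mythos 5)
Your bookkeeping exactly mirrors the paper's: the paper also splits $\units{\Z[p]}$ via a primitive $(p-1)$-st root of unity $\zeta$ and $\gamma=1+p$, also deduces $\lambda^{p^{n-1}}=1$ from $[\gamma^{p^{n-1}}]f=f$ (which holds because $p^nR=0$ and $f\in S(R)$), and also assembles a single integer weight $w$ by CRT so that $[x]f=x^w f$ for all $x\in\units{\Z[p]}$. You also correctly identify where the hypothesis $\lambda\in\Zmod{p}{n}$ is actually used. That much is fine.

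The gap is exactly where you say it is, and it is not a detail you can leave as an ``obstacle'': the statement that an element of $S(R)$ with weight-character $x\mapsto x^w$ is concentrated in a single weight is the entire content of the converse direction, and nothing in the elementary setup supplies it. The paper closes this gap with Katz's moduli-theoretic machinery (\cite{Kat75}, Proposition A1.6, and \cite{Kat73}, Proposition 2.7.2). Concretely: one views $f$, via Katz's moduli interpretation of divided congruences over a ring in which $p$ is nilpotent, as a rule on triples $(E/A,\iota_N,\Phi)$ with $\Phi$ a trivialisation of the formal group; the relation $[x]f=x^w f$ says precisely that the rule
\[ g(E/A,\iota_N,\omega)=c^{-w}f(E/A,\iota_N,\Phi),\qquad \omega=c\,\Phi^{\ast}\!\left(\tfrac{dT}{T+1}\right), \]
is independent of $\Phi$, hence defines a $p$-adic modular form of weight $w$ over $R$; and Katz's Proposition 2.7.2 (which uses that $p$ is nilpotent in $R$) then produces a true modular form whose $q$-expansion agrees with that of $g$, hence with that of $f$. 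Also note a small imprecision in your formulation of the target claim: the paper does \emph{not} show that the $(x\mapsto x^w)$-eigenspace of $S(R)$ equals the image of weight-$w$ classical forms, only that $f$ agrees with a classical form of \emph{some} weight $w'\geq w$. That weaker conclusion is all that is needed for ``weak'', but your stronger statement would require additional justification even granting the Katz input. Without this moduli-theoretic step (or an equivalent structure theorem for $S(R)$ under the weight action), the proof is incomplete.
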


\cref{heckeops} closely follows \cite{BK15} (and its generalisation to higher levels in \cite{Deo17}), and contains slight generalisations of several basic lemmas used in \cite{BK15} from the case of coefficients in a finite field to the case of more general coefficient rings. We chose to make explicit all the details in order to provide a clear reference for these basic results for future use. In the Appendix, we provide a technical lemma for interchanging tensor product and projective limit which is needed to establish the base change property for the Hecke algebra.

\section{Divided congruences}\label{dcsection}
For a positive integer $i \geq 1$, we denote by 
\[ S_i(\sh{O}) \subset \sh{O}[\![q]\!] \] 
the $\sh{O}$-module of $q$-expansions of cusp forms of weight $i$ on $\Gamma_1(N)$ with coefficients in $\sh{O}$. Let
\[ S_{\leq w}(\sh{O}) = \sum_{i=1}^w S_i(\sh{O}) = \bigoplus_{i=1}^w S_i(\sh{O}), \]
\[ S(\sh{O}) = \sum_{i \geq 1} S_i(\sh{O}) = \bigoplus_{i\geq1} S_i(\sh{O}). \]
Note that these sums are direct by a classical argument (\cite{Miy89}, Lemma 2.1.1). If $R$ is an $\sh{O}$-algebra, let $S_i(R)$ (respectively $S(R)$) denote the $R$-module spanned by the image of $S_i(\sh{O})$ (respectively $S(\sh{O})$ via the canonical map $\mor{\sh{O}[\![q]\!]}{R[\![q]\!]}$. While it is true that 
$S_i(R) = S_i(\sh{O})\otimes R$, the sums
\[ S_{\leq w}(R) = \sum_{i=1}^w S_i(R), \indent S(R) = \sum_{i\geq 1} S_i(R) \]
are in general not direct since there are congruences between modular forms of different weights. For example, the image of $E_k - 1 \in S(\Z[p])$ in $S(\F[p])$ is 0 whenever $p \geq 5$ and $k \equiv 0 \pmod{p-1}$. Another way to see why we do not have a nice base change property for $S(\sh{O})$ is to note that the $\sh{O}$-module $\sh{O}[\![q]\!]/S(\sh{O})$ contains torsion elements. This motivates the following definition.
\begin{definition} The module of divided congruences with coefficients in $\sh{O}$ of weight at most $w$ and level $N$ is the module 
\[ D_w(\sh{O}) := \{f \in \sh{O}[\![q]\!] : \exists t \geq 0 \mbox{ such that } \pi^t f \in S_{\leq w}(\sh{O}) \}. \] 
The space of divided congruences is
\[ D(\sh{O}) :=  \bigcup_w D_w(\sh{O}). \]
If $R$ is an $\sh{O}$-algebra, let $D_w(R)$ (respectively $D(R)$) denote the $R$-module spanned by the image of $D_w(\sh{O})$ (respectively $D(\sh{O})$) in $R[\![q]\!]$. We call $D(R)$ the space of divided congruences of level $N$ with coefficients in $R$. 
\end{definition}
The space of divided congruences satisfies a nice base-change property.
\begin{proposition}\label{dcbasechange} Let $R$ be an $\sh{O}$-algebra. Then
\[ D_w(R) = D_w(\sh{O}) \otimes R \] and 
\[ D(R) = D(\sh{O}) \otimes R. \]
\end{proposition}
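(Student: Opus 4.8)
The plan is to show that $D_w(\sh{O})$ is a finite free $\sh{O}$-module which is cut out inside $\sh{O}[\![q]\!]$ by finitely many $q$-expansion coefficients; this produces an $\sh{O}$-linear retraction $\sh{O}[\![q]\!]\to D_w(\sh{O})$ that survives any base change, and both displayed identities follow at once. The point to be careful about is that the natural map $\sh{O}[\![q]\!]\otimes_{\sh{O}}R\to R[\![q]\!]$ is in general not an isomorphism (for instance it is not surjective for $R=\sh{O}[\![t]\!]$), so one cannot simply tensor the inclusion $D_w(\sh{O})\hookrightarrow\sh{O}[\![q]\!]$ and read off the answer; the finiteness of $D_w(\sh{O})$ is exactly what repairs this.

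Write $K$ for the fraction field of $\sh{O}$. The first observation is the identity $D_w(\sh{O})=S_{\le w}(K)\cap\sh{O}[\![q]\!]$ inside $K[\![q]\!]$, where $S_{\le w}(K)=\bigoplus_{i\le w}S_i(K)$: this is immediate from the definition of $D_w(\sh{O})$ together with the facts that $S_i(\sh{O})$ is an $\sh{O}$-lattice in $S_i(K)$ and that the sum $\bigoplus_i S_i$ is direct (\cite{Miy89}, Lemma 2.1.1). The key finiteness input — of Sturm type, and the only genuinely non-formal step — is that $D_w(\sh{O})$ is finitely generated over $\sh{O}$. To prove this, fix a $K$-basis $v_1,\dots,v_d$ of the finite-dimensional space $S_{\le w}(K)$; since the $q$-expansion map is injective there are exponents $m_1,\dots,m_d$ for which the matrix $A\in M_d(K)$ of the $q^{m_l}$-coefficients of the $v_i$ is invertible. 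Then for $f=\sum_i c_i v_i\in D_w(\sh{O})$ the vector $(c_i)_i$ equals $A^{-1}$ applied to the vector of $q^{m_1},\dots,q^{m_d}$-coefficients of $f$, which lies in $\sh{O}^d$; hence $D_w(\sh{O})$ is contained in the finitely generated $\sh{O}$-module $\{\sum_i c_i v_i : (c_i)_i\in A^{-1}\sh{O}^d\}$, so $D_w(\sh{O})$ is finitely generated, and being torsion-free over the discrete valuation ring $\sh{O}$ it is free of rank $d$.

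The remaining ingredient is the evident saturation property $D_w(\sh{O})\cap\pi\sh{O}[\![q]\!]=\pi D_w(\sh{O})$ (if $\pi g\in D_w(\sh{O})$ then $\pi^{t+1}g\in S_{\le w}(\sh{O})$ for suitable $t$, so $g\in D_w(\sh{O})$). This yields two things at once: the quotient $\sh{O}[\![q]\!]/D_w(\sh{O})$ is torsion-free, and any $\sh{O}$-basis $h_1,\dots,h_d$ of $D_w(\sh{O})$ reduces to a $k$-linearly independent family in $k[\![q]\!]=\sh{O}[\![q]\!]/\pi\sh{O}[\![q]\!]$ (since saturation forces $D_w(\sh{O})/\pi D_w(\sh{O})\hookrightarrow\sh{O}[\![q]\!]/\pi\sh{O}[\![q]\!]$). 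The second fact lets me choose exponents $m_1,\dots,m_d$ so that the matrix $H\in M_d(\sh{O})$ recording the $q^{m_l}$-coefficients of the $h_j$ is invertible modulo $\pi$, hence lies in $\GL_d(\sh{O})$ ($\sh{O}$ being local). Now fix an $\sh{O}$-algebra $R$: the reduction $H_R$ lies in $\GL_d(R)$, and composing the canonical $R$-linear map $D_w(\sh{O})\otimes_{\sh{O}}R\to R[\![q]\!]$ (which sends $f\otimes r$ to $r$ times the image of $f$) with the map $R[\![q]\!]\to R^d$ reading off the coefficients at $q^{m_1},\dots,q^{m_d}$ gives, in the basis $(h_j\otimes 1)_j$, the invertible matrix $H_R$ (up to transpose). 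Hence $D_w(\sh{O})\otimes_{\sh{O}}R\to R[\![q]\!]$ is injective; its image is by definition $D_w(R)$, so it induces the isomorphism $D_w(\sh{O})\otimes_{\sh{O}}R\cong D_w(R)$. Finally $D(\sh{O})=\bigcup_w D_w(\sh{O})$ is a directed union with torsion-free (hence flat) successive quotients, so $\otimes_{\sh{O}}R$ commutes with this colimit and $D(R)=\varinjlim_w D_w(R)=\varinjlim_w\bigl(D_w(\sh{O})\otimes_{\sh{O}}R\bigr)=D(\sh{O})\otimes_{\sh{O}}R$.

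The main obstacle is the finiteness step — that $D_w(\sh{O})$ is finitely generated over $\sh{O}$, equivalently that the torsion submodule of $\sh{O}[\![q]\!]/S_{\le w}(\sh{O})$ has bounded exponent — which is the only point where the arithmetic of modular forms enters; everything after it is linear algebra with the single invertible matrix $H$.
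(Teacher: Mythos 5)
Your proof is correct, and it takes a genuinely different (and more careful) route than the paper. The paper tensors the short exact sequence $0\to D_w(\sh{O})\to\sh{O}[\![q]\!]\to\sh{O}[\![q]\!]/D_w(\sh{O})\to0$ with $R$ and invokes $\Tor_1^{\sh{O}}(R,\sh{O}[\![q]\!]/D_w(\sh{O}))=0$, which holds because the quotient is torsion-free over the PID $\sh{O}$, hence flat. That yields the injectivity of $D_w(\sh{O})\otimes_{\sh{O}}R\to\sh{O}[\![q]\!]\otimes_{\sh{O}}R$. However, the paper's displayed diagram writes $R[\![q]\!]$ where tensoring actually produces $\sh{O}[\![q]\!]\otimes_{\sh{O}}R$, and these differ in general — exactly the point you flag at the outset. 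Thus the paper's argument, as written, does not address the remaining step, namely injectivity of $\sh{O}[\![q]\!]\otimes_{\sh{O}}R\to R[\![q]\!]$ (equivalently of $D_w(\sh{O})\otimes_{\sh{O}}R\to R[\![q]\!]$), which is needed to identify $D_w(\sh{O})\otimes_{\sh{O}}R$ with $D_w(R)\subset R[\![q]\!]$. Your retraction argument supplies precisely this missing piece: you show $D_w(\sh{O})$ is finitely generated and free, that the saturation property $D_w(\sh{O})\cap\pi\sh{O}[\![q]\!]=\pi D_w(\sh{O})$ forces a basis to stay linearly independent mod $\pi$, and that the resulting coefficient matrix $H\in\GL_d(\sh{O})$ remains invertible after base change, so that the coefficient-reading map $R[\![q]\!]\to R^d$ is a left inverse to $D_w(\sh{O})\otimes_{\sh{O}}R\to R[\![q]\!]$. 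This is a more concrete argument: the paper's formal homological step would still need to be supplemented by exactly the kind of device you use (a retraction defined by finitely many coefficients that visibly commutes with base change), so in effect your proof is a repaired and self-contained version. The price you pay is having to establish finite generation of $D_w(\sh{O})$ (your Sturm-type bound via an invertible minor of the coefficient matrix of a $K$-basis of $S_{\le w}(K)$), which the paper takes for granted here and only asserts later, in the proof of the duality proposition. One minor stylistic remark: in the final paragraph you invoke torsion-freeness of the successive quotients $D_{w+1}(\sh{O})/D_w(\sh{O})$ to pass to the colimit; this is harmless but unnecessary, since tensor product commutes with arbitrary filtered colimits regardless, as the paper notes in its opening sentence.
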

\begin{proof} It is enough to prove the first statement, as tensor products commute with direct limits. Consider the commutative diagram of $\sh{O}$-modules
\[ \xymatrix{ 0 \ar[r] & D_w(\sh{O}) \ar[r]\ar[d] & \sh{O}[\![q]\!] \ar[r]\ar[d] & \sh{O}[\![q]\!]/D_w(\sh{O}) \ar[r]\ar[d] & 0 \\
            T \ar[r] & D_w(\sh{O})\otimes R \ar[r] & R[\![q]\!] \ar[r] & \sh{O}[\![q]\!]/D_w(\sh{O})\otimes R \ar[r] &  0 } \]
obtained by tensoring the short exact sequence in the first row by $R$, where $T = \Tor[\sh{O}]{R}{\sh{O}[\![q]\!]/D_w(\sh{O})}$. Since $\sh{O}[\![q]\!]/D_w(\sh{O})$ is torsion-free over a PID, it is flat, and so $T = 0$. 
\end{proof}

\section{Hecke operators on divided congruences}\label{heckeops}
The main references for this section are \cite{Kat75}, \cite{Hid86}, and \cite{BK15}. 

For each $x \in \units{\Z[p]}$, there exists an operator $[x]$ acting on $D(\sh{O})$ as follows. If $f \in D(\sh{O})$ and $f = \pi^{-t} \sum_i f_i$ for some $t$ and some cusp forms $f_i \in S_{w_i}(\sh{O})$, then
\[ [x] f = \pi^{-t} \sum_i x^{w_i} f_i .\]
The fact that $[x]$ acts on $D(\sh{O})$ is not obvious and requires results of Katz. 

Also for each $a \in \units{\left(\Zmod{N}{}\right)}$, there exists an operator $\braket{a}$ on $D(\sh{O})$ defined as follows. On $S_w(\sh{O})$, $\braket{a}$ acts as the classical diamond operator on modular forms on $\Gamma_1(N)$. Since the diamond operator preserves $p$-integrality, its action extends to all of $D(\sh{O})$. 

The action of the Hecke operators $T_n$, $p \nmid n$, can also be extended from $S(\sh{O})$ to $D(\sh{O})$. In particular, for a prime $\ell \not = p$, the action of $T_\ell$ on the $q$-expansion of $f \in D(\sh{O})$ is given by
\[ a_n(T_\ell f) = \begin{cases} a_{\ell n}(f) \indent \mbox{if } \ell \nmid n\mbox{ or } \ell | N, \\ a_{\ell n}(f) + \inv{\ell}a_{n/\ell}([\ell]\braket{\ell}f) \indent \mbox{otherwise}.  \end{cases} \]
By introducing the operator $S_\ell$ for $\ell \not = p$ given by
\[ a_n(S_\ell f) = \begin{cases} a_{n}(\ell^{-2} [\ell]\braket{\ell}f) \indent \mbox{if } \ell \nmid N, \\ 0 \indent \mbox{otherwise}\end{cases} \]
and using the identities
\[ T_{mn} = T_m T_n \indent \mbox{when } (m,n) = 1 \]
and 
\[ T_{\ell^{n+1}} = T_\ell T_{\ell^n} - \ell S_\ell T_{\ell^{n-1}} \indent \mbox{for } n \geq 1 \]
we can define the Hecke operators $T_n$, $p \nmid n$, on $D(\sh{O})$. 

Moreover, since $p$ divides the level, the action of the $U$ operator also extends from $S(\sh{O})$ to $D(\sh{O})$. Thus $U$ acts on the $q$-expansion of $f \in D(\sh{O})$ by
\[ a_n(Uf) = \begin{cases} 0 \indent \mbox{if } p \nmid n, \\ a_{n/p}(f) \indent \mbox{if } p | n. \end{cases} \]
These identities imply that for any pair of integers $n, r \geq 0$, $p \nmid n$, and any $f \in D(\sh{O})$, we have $a_1(U^r T_n f) = a_{np^r}(f)$. 

Let $R$ be an $\sh{O}$-algebra. By  \cref{dcbasechange}, the Hecke operators $T_n$, $p \nmid n$, and $U$ induce operators on $D_w(R)$ which are compatible with the action on $q$-expansions. This allows us to define the Hecke operators $T_n$, $p \nmid n$, and $U$ on $D_w(R)$. We can now give a first definition of strong, weak and dc-weak eigenforms.
\begin{definition}\label{def1}\leavevmode\begin{enumerate}[1)]
\item A (normalised) dc-weak eigenform with coefficients in $R$, of level $N$ and weight at most $w$ is an element $f \in D_w(R)$ such that $T_n f = a_n(f)f$ for all $n \geq 1$, $p \nmid n$, and $Uf = a_p(f)f$. 
\item A dc-weak eigenform with coefficients in $R$, level $N$ and weight at most $w$ defined as in 1) is simply called weak of weight $w'$ if $f \in S_{w'}(R)$. That is, a weak eigenform is a dc-weak eigenform that can be found in a single weight.
\item A weak eigenform with coefficients in $R$, level $N$ and weight $w'$ defined as in 2) is called strong of weight $w''$ if $f$ is the image of a normalised eigenform $\tilde{f} \in S_{w''}(\tilde{\sh{O}})$ where $\tilde{\sh{O}}$ is a finite extension of $\sh{O}$.\end{enumerate}

When $R$ is a subring of $\Zmodbar{p}{n}$, we speak of a strong, weak or dc-weak eigenform mod $p^n$. 
\end{definition}
\begin{remark}The weights $w$, $w'$ and $w''$ appearing in \cref{def1} are not uniquely determined. Suppose for example that $p^n = 0$ in $R$. Due to the congruence relation $E_{p-1}^{p^{n-1}} \equiv 1 \pmod{p^n}$, a weak eigenform with coefficients in $R$ of weight $w'$ as in 2) is also of weight $w' + p^{n-1}(p-1)$ (as can be seen simply by multiplying it by $E_{p-1}^{p^{n-1}}$). \end{remark}
Let $\set{T}^{pf}_w(R)$ denote the $R$-subalgebra of endomorphisms of $D_w(R)$ generated by the operators $T_n$, $p \nmid n$ (the superscript ``pf"  is introduced in \cite{Deo17} and stands for ``partially full"). Let $\set{T}_w(R)$ denote the $R$-subalgebra of endomorphisms of $D_w(R)$ generated by $\set{T}^{pf}_w(R)$ and by $U$. We give $\set{T}^{pf}_w(R)$ and $\set{T}_w(R)$ the $\pi$-adic topology\footnote{Here, by abuse of notation, we use $\pi$ to denote the uniformiser of $\sh{O}$ as well as its image in $R$. If $\pi$ is nilpotent in $R$ then the corresponding topology is discrete.}. For $w' > w$ we have restriction maps
\[ \surjmor{\set{T}^{pf}_{w'}(R)}{\set{T}^{pf}_{w}(R)} \indent \mbox {and}\indent \surjmor{\set{T}_{w'}(R)}{\set{T}_{w}(R)}. \]
We let 
\[ \set{T}^{pf}(R) = \varprojlim_{w} \set{T}^{pf}_w(R)  \indent\mbox{and}\indent \set{T}(R) = \varprojlim_{w} \set{T}_w(R) \]
and we give these rings the corresponding projective limit topologies. 
\begin{remark}\label{noplevel} Recall that we assume $N = p^r N_0$ where $p \nmid N_0$ and $r \geq 1$. Just like above, we can define the space $D(N_0, \Z[p])$ of divided congruences of level $N_0$ with coefficients in $\Z[p]$, and the $\Z[p]$-algebra $\set{T}^{pf}(N_0, \Z[p])$ generated by the Hecke operators $T_n, p \nmid n$, acting on $D(N_0, \Z[p])$. By Proposition I.3.9 of \cite{Gou88}, $D(\Z[p])$ and $D(N_0, \Z[p])$ have the same closure in $\Z[p][\![q]\!]$ in the uniform convergence topology. It follows from this that there is a natural isomorphism $\set{T}^{pf}(\Z[p])\cong \set{T}^{pf}(N_0, \Z[p])$. See Corollary 13 of \cite{Deo17} and Corollary 13 of \cite{BK15} for further details. \end{remark}

We define an $R$-bilinear form 
\[ \mor{\set{T}_w(R) \times D_w(R)}{R} \]
\[ (T, f) \mapsto a_1(Tf). \]
This pairing satisfies the property that $a_1(T_n f) = a_n(f)$, and induces maps
\[ \tau_R: \mor{\set{T}_w(R)}{\Hom[R]{D_w(R)}{R}}, \] 
\[ \phi_R: \mor{D_w(R)}{\Hom[R]{\set{T}_w(R)}{R}}, \] where
\[ \tau_R(T)(f) = \phi_R(f)(T) = a_1(Tf). \]
\begin{proposition}\label{duality} The pairing defined above is perfect, i.e., the maps $\tau_R$ and $\phi_R$ are isomorphisms.
\end{proposition}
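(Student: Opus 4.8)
The plan is to bootstrap on the coefficient ring: establish the duality first over the discrete valuation ring $\sh{O}$ itself, reducing further to the fraction field $K$ of $\sh{O}$ where it becomes a dimension count, and then deduce the statement for a general $\sh{O}$-algebra $R$ by base change, using that $D_w(\sh{O})$ and $\set{T}_w(\sh{O})$ are finite free over $\sh{O}$. The first observation, valid for any $\sh{O}$-algebra $R$, is that $\phi_R$ and $\tau_R$ are injective. If $f \in D_w(R)$ satisfies $a_1(Tf) = 0$ for all $T \in \set{T}_w(R)$, then taking $T = U^r T_n$ with $p \nmid n$ gives $a_{np^r}(f) = a_1(U^r T_n f) = 0$, so $a_m(f) = 0$ for every $m \geq 1$; as elements of $D_w(R)$ have vanishing constant coefficient, $f = 0$, proving $\phi_R$ injective. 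Injectivity of $\tau_R$ then follows from the commutativity of $\set{T}_w(R)$ (the generating Hecke operators commute): if $a_1(Tf) = 0$ for all $f$, then for each fixed $f$ and every $S \in \set{T}_w(R)$ we have $a_1(S \cdot Tf) = a_1(T \cdot Sf) = 0$, hence $\phi_R(Tf) = 0$ and $Tf = 0$; since $f$ is arbitrary, $T = 0$ in $\operatorname{End}_R(D_w(R))$.

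Over $K$ the spaces $D_w(K) = S_{\leq w}(K)$ and $\set{T}_w(K) \subseteq \operatorname{End}_K(D_w(K))$ are finite-dimensional, so the injections $\phi_K$ and $\tau_K$ force $\dim_K D_w(K) \leq \dim_K \set{T}_w(K) \leq \dim_K D_w(K)$; equality makes both $\phi_K$ and $\tau_K$ isomorphisms. To descend to $\sh{O}$, one checks that $D_w(\sh{O})$ and $\set{T}_w(\sh{O})$ are finitely generated and torsion-free, hence free, over $\sh{O}$ (the former squeezed between $S_{\leq w}(\sh{O})$ and a lattice in $S_{\leq w}(K)$, the latter a submodule of $\operatorname{End}_{\sh{O}}(D_w(\sh{O}))$), with $- \otimes_{\sh{O}} K$ recovering $D_w(K)$ and $\set{T}_w(K)$. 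The crucial step is surjectivity of $\phi_{\sh{O}}$: given $\lambda \in \Hom[\sh{O}]{\set{T}_w(\sh{O})}{\sh{O}}$, extend it to $\lambda_K \in \Hom[K]{\set{T}_w(K)}{K}$ and write $\lambda_K = \phi_K(f)$ for the unique $f \in D_w(K)$; then $a_{np^r}(f) = \lambda_K(U^r T_n) = \lambda(U^r T_n) \in \sh{O}$ for all $p \nmid n$ and $r \geq 0$, while $a_0(f) = 0$, so $f \in \sh{O}[\![q]\!] \cap S_{\leq w}(K) = D_w(\sh{O})$ and $\phi_{\sh{O}}(f) = \lambda$. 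Hence $\phi_{\sh{O}}$ is an isomorphism, and applying $\Hom[\sh{O}]{-}{\sh{O}}$ together with the double-duality isomorphism of the finite free module $\set{T}_w(\sh{O})$ shows that $\tau_{\sh{O}}$ is an isomorphism too.

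Finally, for a general $\sh{O}$-algebra $R$, tensoring the isomorphism $\tau_{\sh{O}}$ with $R$ and using \cref{dcbasechange} together with the finite freeness of $D_w(\sh{O})$ (which makes $\Hom$ commute with $- \otimes_{\sh{O}} R$) produces an isomorphism $\beta_R \colon \set{T}_w(\sh{O}) \otimes_{\sh{O}} R \xrightarrow{\sim} \Hom[R]{D_w(R)}{R}$. The canonical surjection $p \colon \set{T}_w(\sh{O}) \otimes_{\sh{O}} R \twoheadrightarrow \set{T}_w(R)$ (carrying each Hecke operator to itself) satisfies $\tau_R \circ p = \beta_R$, so $p$ is injective, hence an isomorphism, and $\tau_R = \beta_R \circ p^{-1}$ is an isomorphism; then $\phi_R$ is recovered from $\tau_R$ by applying $\Hom[R]{-}{R}$ and the double-duality isomorphism of the finite free $R$-module $D_w(R) = D_w(\sh{O}) \otimes_{\sh{O}} R$. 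The step I expect to be the real obstacle is the descent from $K$ to $\sh{O}$: injectivity is formal and the field case is a dimension count, but surjectivity over $\sh{O}$ genuinely uses both that every coefficient $a_{np^r}(f)$ is realised as a Hecke eigenvalue $a_1(U^r T_n f)$ and that $D_w(\sh{O})$ is \emph{defined} as the saturation of $S_{\leq w}(\sh{O})$ inside $\sh{O}[\![q]\!]$, so that integrality of the $q$-expansion is exactly what is needed to land back in $D_w(\sh{O})$; everything else is bookkeeping with flatness and freeness.
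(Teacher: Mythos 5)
Your proof is correct but takes a genuinely different route from the paper for the key base case over $\sh{O}$. Where the paper simply invokes Hida's duality result (\cite{Hid86}, Proposition 2.1) to get perfectness over $\sh{O}$, you reprove it from scratch: a dimension count over the fraction field $K$ (both $\phi_K$ and $\tau_K$ are injective between finite-dimensional spaces, forcing $\dim_K D_w(K) = \dim_K \set{T}_w(K)$ and hence surjectivity), followed by an integrality descent to $\sh{O}$ using the facts that every coefficient $a_{np^r}(f)$ is of the form $a_1(U^r T_n f)$ and that $D_w(\sh{O}) = \sh{O}[\![q]\!] \cap D_w(K)$ by the saturation built into the definition of $D_w$. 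Your observation that $\tau$'s injectivity can be deduced from $\phi$'s via commutativity of $\set{T}_w(R)$, rather than argued symmetrically as the paper does, is a small elegance. Your base change step to a general $\sh{O}$-algebra $R$ is in substance the same as the paper's (both hinge on freeness of $\set{T}_w(\sh{O})$, \cref{dcbasechange}, and the surjection $\set{T}_w(\sh{O})\otimes R \twoheadrightarrow \set{T}_w(R)$), though you package it by tensoring $\tau_{\sh{O}}$ and invoking double duality while the paper works with $\phi$ and the pullback $\iota^*$. What your route buys is self-containedness and a transparent reason the duality holds; what the paper's route buys is brevity by leaning on the existing Hida reference.
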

\begin{proof} The maps $\tau_R$ and $\phi_R$ are injective. To see that, suppose that $\phi_R(f)(T) = 0$ for all $T \in \set{T}_w(R)$. Then in particular, for any pair of integers $n, r \geq 0$, $p \nmid n$, we have $\phi_R(f)(U^r T_n) = a_1(U^r T_n f) = a_{np^r}(f) = 0$, hence \ $f = 0$. Similarly, if $\tau_R(T)(f) = 0$ for all $f \in D_w(R)$, then $\tau_R(T)(U^r T_n f) = a_{np^r}(Tf) = 0$ for all $n,r \geq 0$, $p \nmid n$, and all $f \in D_w(R)$. Thus $Tf = 0$ for all $f$ and hence $T = 0$. 

It remains to show that these maps are surjective. For $R = \sh{O}$, this follows from \cite{Hid86}, Proposition 2.1. In particular, $\set{T}_w(\sh{O})$ is the $\sh{O}$-dual of $D_w(\sh{O})$. Since $D_w(\sh{O})$ is finitely generated and torsion-free over a PID, it is free of finite rank. Thus $\set{T}_w(\sh{O})$ is also free of same rank as $D_w(\sh{O})$.

Suppose now that $R$ is an $\sh{O}$-algebra and let $\mor[\iota]{\set{T}_w(\sh{O})}{\set{T}_w(R)}$ be the natural map (sending Hecke operators to Hecke operators of the same name). This map gives rise to an $\sh{O}$-linear map 
\[\mor[\iota^{\ast}]{\Hom[R]{\set{T}_w(R)}{R}}{\Hom[\sh{O}]{\set{T}_w(\sh{O})}{ R}}. \]
Since $\set{T}_w(\sh{O})$ is free, we have isomorphisms of $\sh{O}$-modules
\[ \Hom[\sh{O}]{\set{T}_w(\sh{O})}{ R}\cong \Hom[\sh{O}]{\set{T}_w(\sh{O})}{\sh{ O}}\otimes_{\sh{O}} R   \]
\[  \cong D_w(\sh{O})\otimes_{\sh{O}} R \cong D_w(R)\]
(where we have used \cref{dcbasechange} for the last isomorphism). 

Let $\varphi\in \Hom[R]{\set{T}_w(R)}{R}$. Using the map $\iota^{\ast}$ and the isomorphisms just described, we can produce an element $f \in D_w(R)$. Now we only need to check that $\phi_R(f) = \varphi$. Since the image of $\iota$ generates $\set{T}_w(R)$ as an $R$-module, we only need to check that $\phi_R(f)(\iota(T)) = \varphi(\iota(T))$ for all $T \in \set{T}_w(\sh{O})$. This is done by unwrapping the definitions of all the maps involved.

The proof for $\tau_R$ is similar.
\end{proof}

We can now prove a base change property for the full Hecke algebra. 
\begin{corollary}\label{heckebasechange} Suppose $R$ is finitely generated\footnote{This would be case if $R$ is a quotient of a finite extension of $\sh{O}$.} as an $\sh{O}$-module. Then we have isomorphisms \[\set{T}_w(R) \cong \set{T}_w(\sh{O}) \otimes R\] and \[\set{T}(R) \cong \set{T}(\sh{O})\otimes R.\]  \end{corollary}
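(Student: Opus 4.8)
The plan is to derive both isomorphisms from the perfect duality of \cref{duality}, using crucially that $D_w(\sh{O})$, and hence $\set{T}_w(\sh{O})$, are free of finite rank over the PID $\sh{O}$.

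I would first dispose of the finite-weight statement. There is a natural $R$-algebra homomorphism $\alpha_w\colon \set{T}_w(\sh{O})\otimes R \to \set{T}_w(R)$, obtained by extending $R$-linearly the map $\iota$ that sends each Hecke operator to the operator of the same name on $D_w(R)$. It is surjective, since its image is an $R$-subalgebra of $\set{T}_w(R)$ containing the generators $T_n$ ($p\nmid n$) and $U$, hence all of $\set{T}_w(R)$. For injectivity I would identify $\alpha_w$, under the duality, with a composite of visibly bijective maps: by \cref{dcbasechange} we have $D_w(R) = D_w(\sh{O})\otimes R$, so tensor--hom adjunction together with the finite freeness of $D_w(\sh{O})$ over $\sh{O}$ gives natural isomorphisms
\[ \Hom[R]{D_w(R)}{R} \;\cong\; \Hom[\sh{O}]{D_w(\sh{O})}{R} \;\cong\; \Hom[\sh{O}]{D_w(\sh{O})}{\sh{O}}\otimes R \;\cong\; \set{T}_w(\sh{O})\otimes R, \]
the two outer isomorphisms being \cref{duality} for $R$ and for $\sh{O}$ respectively. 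Unwinding the definition of the pairing $(T,f)\mapsto a_1(Tf)$ and the compatibility of the Hecke action with base change (as recorded just before \cref{def1}), one checks that $\tau_R\circ\alpha_w$ coincides with this chain of isomorphisms; hence $\alpha_w$ is injective, thus bijective. Note that this step does not in fact use that $R$ is finitely generated over $\sh{O}$.

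For the full Hecke algebra I would pass to the inverse limit over $w$. Since $\set{T}(R) = \varprojlim_w \set{T}_w(R)$ and the isomorphisms $\alpha_w$ are compatible with the restriction maps, it remains to produce a natural isomorphism
\[ \varprojlim_w\bigl(\set{T}_w(\sh{O})\otimes R\bigr) \;\cong\; \Bigl(\varprojlim_w \set{T}_w(\sh{O})\Bigr)\otimes R \;=\; \set{T}(\sh{O})\otimes R. \]
This is where I would invoke the technical lemma of the Appendix on interchanging $\otimes$ with $\varprojlim$: its hypotheses are met because $\sh{O}$ is Noetherian and $R$ is a finitely generated — hence finitely presented — $\sh{O}$-module, and because the transition maps $\surjmor{\set{T}_{w'}(\sh{O})}{\set{T}_w(\sh{O})}$ are surjective, so the system is Mittag--Leffler and $\varprojlim^1$ vanishes. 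Finally I would remark that the resulting bijection respects the topologies (the $\pi$-adic topology on $\set{T}(\sh{O})\otimes R$ and the projective-limit topology on $\set{T}(R)$), which is automatic since $R$ is finite over $\sh{O}$.

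The only genuinely delicate point is the commutation of tensor product with the projective limit in the last step; this fails in general and is exactly what the Appendix lemma is tailored to handle. Everything else is formal: the finite-weight statement is bookkeeping with the duality isomorphism, and the single place requiring care there — that $\tau_R\circ\alpha_w$ is the natural composite — reduces to the identity $a_1(T_n f) = a_n(f)$ together with the fact that the Hecke operators on $D_w(R)$ are induced from those on $D_w(\sh{O})$ by base change.
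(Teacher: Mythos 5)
Your proposal is correct and follows essentially the same route as the paper: surjectivity of the natural map $\set{T}_w(\sh{O})\otimes R \to \set{T}_w(R)$ is immediate, both sides are identified via Proposition~\ref{duality}, and the passage to the limit is handled by Lemma~\ref{tensorlimit}. The one small difference is in the finite-weight injectivity step: the paper observes that $\set{T}_w(\sh{O})\otimes R$ and $\set{T}_w(R)$ are free $R$-modules of the same finite rank (both of rank $\operatorname{rk}_{\sh{O}} D_w(\sh{O})$, by duality and base change) and concludes that a surjection between them must be an isomorphism, whereas you instead trace $\tau_R\circ\alpha_w$ explicitly through the chain of duality and tensor--hom isomorphisms. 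These are two packagings of the same use of Proposition~\ref{duality}, and both are fine. One small imprecision in your account of the limit step: you cite Mittag--Leffler and $\varprojlim^1$-vanishing as the reason Lemma~\ref{tensorlimit} applies, but the lemma's actual hypotheses are that $B$ be finitely generated, that each $A_t$ be compact Hausdorff, that the transition maps be continuous and surjective, and that $\Tor_{\sh{O}}(A_t,B)=0$; in the paper's application these are satisfied because $R$ is finite over $\sh{O}$ and each $\set{T}_w(\sh{O})$ is free (hence flat), compact and Hausdorff. The $\varprojlim^1$-vanishing is a step \emph{inside} the proof of the appendix lemma (via McGibbon's result on compact Hausdorff groups), not a hypothesis you need to verify yourself. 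Your remark that the finite-weight statement holds for arbitrary $\sh{O}$-algebras $R$ is a correct and mildly interesting observation not made explicit in the paper.
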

\begin{proof} The natural $\sh{O}$-linear map $\mor{\set{T}_w(\sh{O})}{\set{T}_w(R)}$ (obtained from  \cref{dcbasechange}) gives a surjective map $\surjmor{\set{T}_w(\sh{O})\otimes R}{\set{T}_w(R)}$. By  \cref{duality}, $\set{T}_w(\sh{O})\otimes R$ and $\set{T}_w(R)$ are both free $R$-modules of equal finite rank. This map is therefore an isomorphism. 

To prove the second statement, we need to pass to the limit. Even though tensor products do not in general commute with projective limits, we do have
\[ \varprojlim_w (\set{T}_w(\sh{O}) \otimes R) = \left(\varprojlim_w \set{T}_w(\sh{O}) \right)\otimes R \]
by \cref{tensorlimit} of the Appendix (which we can apply because each $\set{T}_w(\sh{O})$ is free, compact and Hausdorff).
\end{proof}

The pairing defined above induces upon taking limits a pairing
\[ \mor{\set{T}(R) \times D(R)}{R}. \]
We continue to denote the maps induced by $\tau_R$ and $\phi_R$ after taking limits by the same letters.
\begin{proposition}\label{pairinglimit}\leavevmode\begin{enumerate}[(a)]
\item The map $\tau_R$ induces an isomorphism
\[ \set{T}(R) \cong \Hom{D(R)}{R}. \]
\item The map $\phi_R$ is an injection
\[ \injmor{D(R)}{\Hom[cont]{\set{T}(R)}{R}}, \]
where $\Hom[cont]{\set{T}(R)}{R}$ is the set of $R$-linear maps $\mor{\set{T}(R)}{R}$ which are continuous in the projective limit topology on $\set{T}(R)$ and the $\pi$-adic topology on $R$. The image $\phi_R(D(R))$ consists of the $R$-linear maps $\mor{\set{T}(R)}{R}$ which factor through $\surjmor{\set{T}(R)}{\set{T}_w(R)}$ for some $w$. 
\item If $R$ is discrete, then $\phi_R$ is an isomorphism.
\end{enumerate}
\end{proposition}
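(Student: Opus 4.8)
The plan is to deduce all three statements by passing to the limit in the finite-level perfect pairing of \cref{duality}, using that $D(R)=\varinjlim_w D_w(R)$ — the transition maps being the inclusions of submodules of $R[\![q]\!]$ — while $\set{T}(R)=\varprojlim_w\set{T}_w(R)$ by definition, and that $\Hom[R]{-}{R}$ converts a colimit into a limit. The basic input is that the stagewise isomorphisms $\tau_R\colon\set{T}_w(R)\xrightarrow{\sim}\Hom[R]{D_w(R)}{R}$ and $\phi_R\colon D_w(R)\xrightarrow{\sim}\Hom[R]{\set{T}_w(R)}{R}$ of \cref{duality} are compatible with the structure maps: for $w<w'$ the restriction map $\set{T}_{w'}(R)\to\set{T}_w(R)$ corresponds, under $\tau_R$ and $\phi_R$, to the transpose of the inclusion $D_w(R)\hookrightarrow D_{w'}(R)$, which is immediate because $a_1(Tf)$ does not depend on the ambient weight. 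Granting this, part (a) is formal: $\Hom[R]{D(R)}{R}=\Hom[R]{\varinjlim_w D_w(R)}{R}\cong\varprojlim_w\Hom[R]{D_w(R)}{R}\cong\varprojlim_w\set{T}_w(R)=\set{T}(R)$, the middle isomorphism being \cref{duality} applied stagewise.

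For part (b), injectivity of $\phi_R$ on $D(R)$ repeats the argument in \cref{duality}: if $f\in D_w(R)$ and $\phi_R(f)=0$, then $a_{np^r}(f)=a_1(U^rT_nf)=\phi_R(f)(U^rT_n)=0$ for all $p\nmid n$ and $r\geq 0$, so $f=0$. For continuity and the description of the image, note that for $f\in D_w(R)$ the functional $T\mapsto a_1(Tf)$ depends only on the image of $T$ in $\set{T}_w(R)$, so $\phi_R(f)$ factors as $\set{T}(R)\to\set{T}_w(R)\to R$, where the first arrow is the projection (continuous by definition of the projective-limit topology) and the second is $R$-linear between $\pi$-adically topologised $R$-modules (hence continuous); this shows $\phi_R(f)\in\Hom[cont]{\set{T}(R)}{R}$ and that it factors through a finite stage. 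Conversely, if an $R$-linear map $\psi\colon\set{T}(R)\to R$ equals $\bar\psi$ composed with the projection onto $\set{T}_w(R)$ for some $\bar\psi\in\Hom[R]{\set{T}_w(R)}{R}$, then \cref{duality} produces $f\in D_w(R)\subseteq D(R)$ with $\phi_R(f)=\bar\psi$, whence $\phi_R(f)=\psi$. Thus $\phi_R(D(R))$ is exactly the set of $R$-linear functionals on $\set{T}(R)$ that factor through some $\set{T}_w(R)$.

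For part (c), let $R$ be discrete and $\psi\in\Hom[cont]{\set{T}(R)}{R}$. Then $\ker\psi=\psi^{-1}(\{0\})$ is open in $\set{T}(R)$, and a fundamental system of open neighbourhoods of $0$ in $\set{T}(R)=\varprojlim_w\set{T}_w(R)$ is given by the subgroups $\ker\bigl(\set{T}(R)\to\set{T}_w(R)/\pi^k\set{T}_w(R)\bigr)$ for $w,k\geq 1$; hence $\psi$ vanishes on one of them. Since the restriction maps $\set{T}_{w'}(R)\to\set{T}_w(R)$ are surjective, so is the projection $\set{T}(R)\to\set{T}_w(R)$, and therefore $\psi$ factors through $\set{T}_w(R)/\pi^k\set{T}_w(R)$, in particular through $\set{T}_w(R)$. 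By part (b) this means $\psi\in\phi_R(D(R))$, so together with injectivity $\phi_R$ is an isomorphism onto $\Hom[cont]{\set{T}(R)}{R}$. (Equivalently, (b) and (c) may be packaged as: $\varinjlim_w\Hom[R]{\set{T}_w(R)}{R}$ embeds into $\Hom[cont]{\set{T}(R)}{R}$ with image the functionals factoring through a finite stage — an equality exactly when $R$ is discrete — and one transports this along the stagewise isomorphisms $\phi_R$.)

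I expect the only delicate point to be the topological bookkeeping in (b) and (c): pinning down the right neighbourhood basis of the projective limit $\set{T}(R)$, checking that surjectivity of the restriction maps propagates to the projections out of the limit, and using discreteness of $R$ to force a continuous functional to vanish on one basic neighbourhood of $0$. The rest — part (a), the colimit/limit formalism, and the injectivity in (b) — is either formal or already contained in \cref{duality}.
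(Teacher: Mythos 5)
Your proof is correct and follows the same general strategy as the paper: pass to the limit in the stagewise duality of Proposition~\ref{duality}, with (a) a purely formal colimit/limit swap, (b) using that each $\phi_R(f)$ factors through a finite stage (so is continuous) while every functional factoring through $\set{T}_w(R)$ pulls back from some $f \in D_w(R)$, and (c) using discreteness of $R$ to force a continuous functional to factor through a finite stage. The one place where your argument genuinely diverges is part~(c): the paper argues by contradiction, choosing $x_w \in I_w := \ker(\set{T}(R)\to\set{T}_w(R))$ with $\varphi(x_w)\neq 0$, noting $x_w\to 0$, and invoking continuity and discreteness to see that the sequence $\varphi(x_w)$ cannot avoid $0$; you instead argue directly that $\ker\psi$ is open (since $R$ is discrete), hence contains a basic open neighbourhood of $0$, hence $\psi$ factors through some $\set{T}_w(R)$. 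Both are valid, and both really use the same fact, namely that for $R$ discrete the $I_w$ form a neighbourhood basis of $0$ in $\set{T}(R)$ (equivalently, that $\pi$ is nilpotent in $R$, so each $\set{T}_w(R)$ is discrete in its $\pi$-adic topology). Your direct version is slightly cleaner and records the neighbourhood basis explicitly; it also makes visible the auxiliary fact that the projections $\set{T}(R)\to\set{T}_w(R)$ are surjective (which holds because the system is countable with surjective transition maps), a step the paper uses tacitly.
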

\begin{proof} \begin{enumerate}[(a)]
\item The map $\tau_R$ induces an isomorphism since
\[ \set{T}(R) = \varprojlim_w \set{T}_w(R) \cong \varprojlim_w \Hom{D_w(R)}{R} \]
\[ \cong \Hom{\varinjlim_w D_w(R)}{R} = \Hom{D(R)}{R} \]
(where we have used \cref{duality} for the second isomorphism).
\item The projection map $\surjmor{\set{T}(R)}{\set{T}_w(R)}$ induces an injection
\[ \injmor{\Hom{\set{T}_w}{R}}{\Hom{\set{T}(R)}{R}}. \] By \cref{duality}, $\phi_R$ induces an injection
\[ \injmor{D(R) = \varinjlim_w D_w(R)}{\Hom{\set{T}(R)}{R}}. \]Thus $\phi_R$ identifies $D(R)$ with continuous $R$-linear maps from $\set{T}(R)$ to $R$ which factor through $\surjmor{\set{T}(R)}{\set{T}_w(R)}$ for some $w$. 


\item Suppose that $R$ is discrete and that $\varphi: \mor{\set{T}(R)}{R}$ is continuous. For each $w$, denote by $I_w$ the kernel of the projection $\surjmor{\set{T}(R)}{\set{T}_w(R)}$. Assume, for the sake of contradiction, that $I_w \not \subset \ker \varphi$ for all $w$. We can then find a sequence $\{x_w \in I_w\}$ such that $y_w = \varphi(x_w) \not = 0$ for all $w$. But $x_w \rightarrow 0$ and, since $\varphi$ is continuous, $y_w \rightarrow 0$. Since $R$ is discrete, the converging sequence $\{y_w\}$ is eventually constant. Thus $\varphi(x_w) = 0$ for some $w$, a contradiction. This finishes the proof.
\end{enumerate}
\end{proof}
\begin{proposition}\label{dcasmaps} Let $R$ be $\Z[p]$-algebra which is finitely generated as a $\Z[p]$-module. There is a one-to-one correspondence between divided congruences $f = \sum_n a_n q^n \in D(R)$ and $\Z[p]$-linear maps $\mor[\varphi]{\set{T}(\Z[p])}{R}$ such that  $\varphi$ factors through $\surjmor{\set{T}(\Z[p])}{\set{T}_w(\Z[p])}$ for some $w$ and $\varphi(T_n) = a_n$. This map $\varphi$ is a $\Z[p]$-algebra homomorphism if and only if $f$ is a normalised eigenform, i.e., $T_n f = a_n(f) f $ for all $n \geq 1$, $p \nmid n$, and $Uf = a_p(f)f$. 
\end{proposition}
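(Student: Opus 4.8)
The plan is to deduce the correspondence from the duality and base-change results already established, and then to verify the ring-homomorphism criterion by a short computation with the pairing $(T,f)\mapsto a_1(Tf)$. To build the bijection I would first take $\sh{O}=\Z[p]$ in \cref{heckebasechange} (legitimate since $R$ is finitely generated over $\Z[p]$), obtaining $\set{T}(R)\cong\set{T}(\Z[p])\otimes_{\Z[p]}R$ and $\set{T}_w(R)\cong\set{T}_w(\Z[p])\otimes_{\Z[p]}R$ for every $w$, compatibly with the projection maps, which become $\pi_w\otimes\operatorname{id}_R$ where $\pi_w\colon\set{T}(\Z[p])\to\set{T}_w(\Z[p])$ is the projection. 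I would then combine this with the adjunction identifying $R$-linear maps $\set{T}(\Z[p])\otimes_{\Z[p]}R\to R$ with $\Z[p]$-linear maps $\set{T}(\Z[p])\to R$, and with \cref{pairinglimit}(b), so that $\phi_R$ identifies $D(R)$ with the set of $\Z[p]$-linear maps $\varphi\colon\set{T}(\Z[p])\to R$ that factor through $\pi_w$ for some $w$; here one checks that the two ``factoring through $\set{T}_w$'' conditions correspond under the adjunction by restricting to the submodule $\set{T}(\Z[p])\otimes 1$. Unwinding the identifications, $f=\sum_n a_nq^n\in D(R)$ corresponds to the map $\varphi$ with $\varphi(T)=a_1(Tf)$ for all $T\in\set{T}(\Z[p])$; in particular $\varphi(T_n)=a_1(T_nf)=a_n(f)=a_n$ whenever $p\nmid n$, which pins down the correspondence as asserted.

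For the eigenform criterion I would work throughout with $\varphi=\phi_R(f)$, so $\varphi(T)=a_1(Tf)$. If $\varphi$ is a $\Z[p]$-algebra homomorphism, then $a_1(f)=\varphi(1)=1$, so $f$ is normalised; moreover, for a generator $X\in\{T_n:p\nmid n\}\cup\{U\}$ put $\lambda:=\varphi(X)$ (so $\lambda=a_n(f)$ if $X=T_n$, and $\lambda=a_p(f)$ if $X=U$, using $a_1(Uf)=a_p(f)$) and $g:=Xf-\lambda f\in D(R)$. For every $S=U^rT_m$ with $r\geq 0$ and $p\nmid m$, multiplicativity of $\varphi$ gives
\[ a_1(Sg)=a_1\bigl((SX)f\bigr)-\lambda\, a_1(Sf)=\varphi(SX)-\lambda\,\varphi(S)=\varphi(S)\lambda-\lambda\,\varphi(S)=0, \]
and since $a_1(U^rT_mg)=a_{mp^r}(g)$ this forces $g=0$ exactly as in the proof of \cref{duality}, i.e.\ $Xf=\lambda f$, which is precisely the eigenform relation for $X$. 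Conversely, suppose $f$ is a normalised eigenform, so $a_1(f)=1$, $T_nf=a_n(f)f$ for $p\nmid n$, and $Uf=a_p(f)f$. Then each generator acts on $f$ by a scalar, hence so does each $T$ in the $\Z[p]$-algebra $\set{T}_w(\Z[p])$ through which $\varphi$ factors, because sums and products of operators acting by scalars on $f$ again act by scalars on $f$; writing $Tf=c(T)f$ and taking $a_1$ shows $c(T)=a_1(Tf)=\varphi(T)$, so the scalar is unambiguous even if $D(R)$ carries $\Z[p]$-torsion. Then $\varphi(1)=a_1(f)=1$ and, for generators $X_1,\dots,X_k$,
\[ \varphi(X_1\cdots X_k)=a_1(X_1\cdots X_kf)=\varphi(X_1)\cdots\varphi(X_k)\,a_1(f)=\varphi(X_1)\cdots\varphi(X_k), \]
so $\varphi$, being additive, multiplicative and unital on a generating set of $\set{T}_w(\Z[p])$, is a $\Z[p]$-algebra homomorphism on $\set{T}_w(\Z[p])$ and hence on $\set{T}(\Z[p])$, through which it factors.

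I expect the routine bookkeeping --- compatibility of the isomorphisms of \cref{heckebasechange} with the projections $\pi_w$, the matching of the ``factoring through $\set{T}_w$'' conditions under the adjunction, and the fact that $\set{T}_w(\Z[p])$ is generated as a $\Z[p]$-algebra by the $T_n$ ($p\nmid n$) together with $U$, with no completion involved --- to be unproblematic. The one step that genuinely needs care is the converse direction of the criterion: since $D(R)$ may have $\Z[p]$-torsion, the eigenvalue $c(T)$ in $Tf=c(T)f$ is a priori ill-defined, and the argument hinges on replacing it by $a_1(Tf)$ and using $a_1(f)=1$; the accompanying small point is that, because $\varphi$ factors through the finite-level algebra $\set{T}_w(\Z[p])$, it is enough to check multiplicativity on generators and no continuity or limiting argument is needed.
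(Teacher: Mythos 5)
Your proof is correct and follows essentially the same route as the paper: base change plus tensor--hom adjunction plus the duality of Proposition~\ref{pairinglimit} to get the bijection, and direct computation with the pairing $(T,f)\mapsto a_1(Tf)$ for the eigenform criterion. The only differences are cosmetic --- you pin down the eigenvalue as $a_1(Tf)$ to avoid the potential torsion ambiguity in $D(R)$ and spell out the vanishing of all coefficients $a_{mp^r}$ of $Xf-\lambda f$, points the paper treats implicitly.
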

\begin{proof} By tensor-hom adjunction and \cref{heckebasechange}, we have isomorphisms
\[ \Hom[R]{\set{T}_w(R)}{R} \cong \Hom[\mathbb{Z}_p]{\set{T}_w(\Z[p])}{R}, \]
\[ \Hom[R, cont]{\set{T}(R)}{R} \cong \Hom[\mathbb{Z}_p, cont]{\set{T}(\Z[p])}{R}. \]
Now using \cref{pairinglimit} establishes the first claim.

For the second statement, suppose first that $f$ is a normalised eigenform. Take any two operators $T, T' \in \set{T}(\Z[p])$ such that $Tf = \lambda f$ and $T'f = \lambda'f$. We have $\varphi(TT') = a_1(TT'f) = a_1(\lambda\lambda'f) = \lambda \lambda' = \varphi(T)\varphi(T')$.

Conversely, suppose that $\varphi$ is a homomorphism. Then for any $T\in\set{T}(\Z[p])$ and any $n\geq1$ such that $p \nmid n$, we have
\[a_n(Tf) = \varphi(T_n T) = \varphi(T_n)\phi(T) = \varphi(T)a_n(f) \]
and similarly
\[a_p(Tf) = \varphi(UT) = \varphi(U)\phi(T) = \varphi(T)a_p(f), \]
therefore $Tf = \varphi(T)f$. In particular, $f = \varphi(1)f$, and $\varphi$ is a ring homomorphism.
\end{proof}
Because of  \cref{dcasmaps}, we will simply write $\set{T}_w$ (respectively $\set{T}$, $\set{T}^{pf}_w$, $\set{T}^{pf}$) for $\set{T}_w(\Z[p])$ (respectively $\set{T}(\Z[p])$, $\set{T}^{pf}_w(\Z[p])$, $\set{T}^{pf}(\Z[p])$) and identify divided congruences with coefficients in $R$ with continuous maps $\mor{\set{T}}{R}$. 

We would like to compare the Hecke algebra of $D(\Z[p])$ congruences with the Hecke algebra of $S(\Z[p])$. Let $\sh{H}_w$ (respectively $\sh{H}^{pf}_w$) denote the $\Z[p]$-algebra generated by the restrictions of elements of $\set{T}_w$ (respectively $\set{T}^{pf}_w)$ to $S_{\leq w}(\Z[p])$. We let
\[ \sh{H}^{pf} = \varprojlim_{w} \sh{H}^{pf}_w  \indent\mbox{and}\indent \sh{H} = \varprojlim_{w} \sh{H}_w. \]
\begin{proposition}\label{teqh} We have (continuous) isomorphisms 
\[ \set{T} \cong \sh{H} \indent \mbox{and} \indent \set{T}^{pf} \cong \sh{H}^{pf}. \]
\end{proposition}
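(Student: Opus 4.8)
The plan is to establish, for each weight bound $w$, that the natural restriction-of-action map $\set{T}_w \to \sh{H}_w$ (and likewise $\set{T}^{pf}_w \to \sh{H}^{pf}_w$) is an isomorphism of topological rings, and then to pass to the projective limit over $w$.

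First I would note that $S_{\leq w}(\Z[p])$ is a $\Z[p]$-submodule of $D_w(\Z[p])$ (indeed any $f \in S_{\leq w}(\Z[p])$ satisfies $p^{0}f \in S_{\leq w}(\Z[p])$), and that it is stable under the operators $T_n$ with $p \nmid n$ and under $U$: the former are the classical Hecke operators and preserve each $S_i(\Z[p])$, while $U = U_p$ carries $S_i(\Z[p])$ into itself because $p \mid N$, the operator $U_p$ acting on a weight-$i$ cusp form of level $N$ by $\sum a_n q^n \mapsto \sum a_{np}q^n$ and preserving integrality of $q$-expansions. Since these operators generate $\set{T}_w$ as a $\Z[p]$-algebra, every element of $\set{T}_w$ stabilises $S_{\leq w}(\Z[p])$, so restriction of the action defines a $\Z[p]$-algebra homomorphism $\set{T}_w \to \operatorname{End}_{\Z[p]}(S_{\leq w}(\Z[p]))$ whose image is precisely $\sh{H}_w$ by definition. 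Hence this map is surjective onto $\sh{H}_w$, and the identical argument applies to $\set{T}^{pf}_w \to \sh{H}^{pf}_w$.

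Next I would prove injectivity. Suppose $T \in \set{T}_w$ restricts to the zero operator on $S_{\leq w}(\Z[p])$ and let $f \in D_w(\Z[p])$ be arbitrary. By the definition of $D_w(\Z[p])$ there is an integer $t \geq 0$ with $p^{t}f \in S_{\leq w}(\Z[p])$, whence $p^{t}(Tf) = T(p^{t}f) = 0$; since $Tf \in D_w(\Z[p]) \subseteq \Z[p][\![q]\!]$ and $\Z[p][\![q]\!]$ is torsion-free, this forces $Tf = 0$. As $f$ was arbitrary, $T = 0$. Therefore $\set{T}_w \cong \sh{H}_w$ and $\set{T}^{pf}_w \cong \sh{H}^{pf}_w$; being isomorphisms of $\Z[p]$-algebras they are moreover homeomorphisms for the $p$-adic topologies.

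Finally I would check that for $w' > w$ the square relating the transition maps $\set{T}_{w'} \to \set{T}_w$ and $\sh{H}_{w'} \to \sh{H}_w$ to the two restriction isomorphisms commutes: all four arrows are restrictions of the action of an operator to a smaller module, and either composite sends $T \in \set{T}_{w'}$ to its restriction to $S_{\leq w}(\Z[p])$. Passing to $\varprojlim_w$ then yields the asserted continuous isomorphisms $\set{T} \cong \sh{H}$ and $\set{T}^{pf} \cong \sh{H}^{pf}$. The only step that is not purely formal is the first one, namely verifying that the Hecke operators — and in particular $U_p$ — genuinely stabilise $S_{\leq w}(\Z[p])$ inside $D_w(\Z[p])$; this is classical, and granting it the remainder of the argument uses nothing beyond torsion-freeness of $\Z[p][\![q]\!]$.
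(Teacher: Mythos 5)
Your proof is correct and takes essentially the same approach as the paper: surjectivity of the restriction map by definition, and injectivity by observing that $D_w(\Z[p])/S_{\leq w}(\Z[p])$ is torsion while $D_w(\Z[p]) \subseteq \Z[p][\![q]\!]$ is torsion-free. Your version unpacks the torsion argument element-by-element and adds the (correct) check that the Hecke operators, including $U$, stabilise $S_{\leq w}(\Z[p])$, but the underlying mechanism is identical to the paper's.
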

\begin{proof} For any weight $w$, we have surjective restriction maps
\[ \surjmor{\set{T}_w}{\sh{H}_w} \indent\mbox{and}\indent \surjmor{\set{T}^{pf}_w}{\sh{H}^{pf}_w}. \]
it is enough to show that these maps are isomorphisms. For this consider a $\Z[p]$-linear map $\mor[T]{D_w(\Z[p])}{D_w(\Z[p])}$ whose restriction to $S_{\leq w}(\Z[p])$ is $0$. Then $T$ factors through $D_w(\Z[p])/S_{\leq w}(\Z[p])$, which is a torsion module. However $D_w(\Z[p])$ is torsion-free, hence $T = 0$. 
\end{proof}

We now present a second definition of the notions of dc-weak, weak and strong eigenforms. 
\begin{definition}\label{def2}Suppose $R$ is finitely generated as a $\Z[p]$-module. \begin{enumerate}[1)]\item A dc-weak eigenform with coefficients  in $R$, of level $N$ and weight at most $w$ is a ring homomorphism $\mor[\varphi]{\set{T}}{R}$ which factors through $\set{T}_w$. 
\item A dc-weak eigenform with coefficients in $R$, of level $N$ and weight at most $w$ defined as in 1) is simply called weak of weight $w'$ if there exists a modular form $f = \sum a_n q^n \in S_{w'}(R)$ such that $\varphi(T_n) = a_n$ for all $n$ such that $p \nmid n$ and $\varphi(U) = a_p$. That is, a weak eigenform is a dc-weak eigenform that lives in a single weight.
\item A weak eigenform with coefficients in $R$, of level $N$ and weight $w$ defined as in 2) is called strong of weight $w''$ if $\varphi$ factors through $\set{T}_{w''}(\tilde{\sh{O}})$ for some finite extension $\tilde{\sh{O}}$ of $\sh{O}$.\end{enumerate}

When $R$ is a subring of $\Zmodbar{p}{n}$, we speak of a strong, weak or dc-weak eigenform mod $p^n$. 
\end{definition}

By \cref{dcasmaps}, \cref{def1} and \cref{def2} amount to the same when $R$ is finitely generated as a $\Z[p]$-module or a subring of $\Zmodbar{p}{n}$, and we can use them interchangeably. 

Clearly, we have inclusions
\[ \{ \mbox{strong eigenforms in $D_w(R)$}\} \subset \{ \mbox{weak eigenforms in $D_w(R)$} \}\]\[ \subset \{ \mbox{dc-weak eigenforms in $D_w(R)$} \}. \]
By composing a strong, weak, or dc-weak eigenform with the reduction map to the residue field $k$, we see that every strong, weak or dc-weak eigenform mod $p^n$ is a lift of some mod $p$ dc-weak eigenform. When $n = 1$, these sets coincide. More precisely, we have the following result. (see Lemme 6.11 of \cite{DS74}, and Lemma 16 of \cite{CKW13} for a proof in this setup).
\begin{lemma}[Deligne-Serre lifting lemma] A dc-weak eigenform mod $p$ of level $N$ and weight at most $w$ is also a strong eigenform mod $p$ of level $N$ and weight at most $w$. 
\end{lemma}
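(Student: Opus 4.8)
The plan is to recognise this as the Deligne--Serre lifting lemma applied to the graded space $S_{\leq w}(\Z[p]) = \bigoplus_{i=1}^w S_i(\Z[p])$ with its Hecke action, and then to note that the resulting lift automatically lives in a single weight.

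First I would unwind the hypothesis. By \cref{teqh} the restriction map $\set{T}_w \to \sh{H}_w$ is an isomorphism, so by \cref{def2} a dc-weak eigenform mod $p$ of level $N$ and weight at most $w$ is the same datum as a ring homomorphism $\mor[\varphi]{\sh{H}_w}{\Fc[p]}$, where $\sh{H}_w$ is the Hecke algebra acting faithfully on the finite free $\Z[p]$-module $M := S_{\leq w}(\Z[p])$. As a subring of $\operatorname{End}_{\Z[p]}(M)$, $\sh{H}_w$ is a finitely generated, hence free, $\Z[p]$-module, and it is integral over $\Z[p]$; the image of $\varphi$ is a finite subfield of $\Fc[p]$, and $\mathfrak{m} := \ker\varphi$ is a maximal ideal of $\sh{H}_w$.

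The lifting step is the heart of the matter. Since $\Z[p]$ is complete local and $\sh{H}_w$ is a finite $\Z[p]$-algebra, $\sh{H}_w$ is the product of its localisations at its finitely many maximal ideals; in particular $\sh{H}_{w,\mathfrak{m}}$ is a nonzero direct summand of $\sh{H}_w$, hence free of positive rank over $\Z[p]$, so $\sh{H}_{w,\mathfrak{m}}\otimes_{\Z[p]}\Q[p]$ is a nonzero finite $\Q[p]$-algebra. Choosing a maximal ideal of it with residue field a finite extension $F'$ of $\Q[p]$ and composing $\sh{H}_w \to \sh{H}_{w,\mathfrak{m}} \to F'$, the image lands in the ring of integers $\sh{O}'$ of $F'$ (since $\sh{H}_w$ is integral over $\Z[p]$), so we obtain $\mor[\psi]{\sh{H}_w}{\sh{O}'}$. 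Using that $\psi$ factors through the local ring $\sh{H}_{w,\mathfrak{m}}$, one checks that the reduction $\sh{H}_w \to \sh{O}'/\mathfrak{m}_{\sh{O}'}$ of $\psi$ has kernel $\mathfrak{m}$, so $\psi$ reduces to $\varphi$. Finally, the induced map $\psi_{F'}\colon \sh{H}_w\otimes_{\Z[p]}F'\to F'$ has as kernel a maximal ideal $\mathfrak{q}$, and since $\sh{H}_w\otimes_{\Z[p]}F'$ acts faithfully on the finite-dimensional $F'$-vector space $M\otimes_{\Z[p]}F' = \bigoplus_{i=1}^w S_i(F')$, the prime $\mathfrak{q}$ is associated to this module; equivalently, the genuine $\psi$-eigenspace of $\bigoplus_{i=1}^w S_i(F')$ is nonzero. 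Note that no semisimplicity of the operator $U$ is needed here.

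To finish, pick a nonzero $g$ in that eigenspace. Because each $T_n$ ($p \nmid n$) and $U$ preserves the weight grading, every homogeneous component $g_i \in S_i(F')$ of $g$ is again a $\psi$-eigenform; fix $i \leq w$ with $g_i \neq 0$. From $a_{mp^r}(g_i) = a_1(U^r T_m g_i) = \psi(U^r T_m)\,a_1(g_i)$ for $p\nmid m$ and $r \geq 0$ we get $a_1(g_i)\neq 0$ (otherwise $g_i = 0$), so $\tilde{f} := a_1(g_i)^{-1}g_i$ is a normalised Hecke eigenform of weight $i$ and level $N$, with $a_n(\tilde{f}) = \psi(T_n)$ for $p\nmid n$ and $a_p(\tilde{f}) = \psi(U)$, all lying in $\sh{O}'$, so that $\tilde{f}\in S_i(\sh{O}')$ by the $q$-expansion principle. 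Its reduction modulo $\mathfrak{m}_{\sh{O}'}$ has $q$-expansion coefficients $\varphi(U^r T_m)$ and therefore, viewed inside $D_w(\sh{O}'/\mathfrak{m}_{\sh{O}'})$, represents the given dc-weak eigenform; by part 3) of \cref{def1} this exhibits it as a strong eigenform mod $p$ of weight $i \leq w$, as desired. The one real difficulty is the lifting step --- passing from the ``mod $p$'' point $\mathfrak{m}$ to a characteristic-zero point above it together with an honest common eigenvector --- which is precisely the content of Lemme 6.11 of \cite{DS74}, made applicable here by the finiteness and freeness of $\sh{H}_w$ over $\Z[p]$; the descent to a single weight is then immediate from Hecke-stability of the grading.
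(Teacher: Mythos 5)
Your proof is correct and is essentially a careful reconstruction of the argument of Lemme~6.11 of \cite{DS74} (and its adaptation in Lemma~16 of \cite{CKW13}), which is exactly what the paper cites without reproducing: you pass to the finite free $\Z[p]$-algebra $\sh{H}_w$ acting faithfully on the graded module $S_{\leq w}(\Z[p])$, lift the maximal ideal $\idl{m}=\ker\varphi$ to a characteristic-zero point, extract a nonzero common eigenvector, and then use that the Hecke action preserves the weight grading to land in a single weight $\leq w$. The one point worth stating explicitly (you use it implicitly when saying $\mathfrak{q}$ is associated) is the standard Artinian-ring fact that a faithful finite module over $\sh{H}_w\otimes F'$ has nonzero $\mathfrak{q}$-torsion for every maximal ideal $\mathfrak{q}$; with that made explicit the argument is complete.
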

When $n \geq 2$, these inclusions might be proper. For example, at fixed weight and level, there are always only finitely many strong eigenforms mod $p^n$ at fixed weight and level, but there can be infinitely many weak eigenforms mod $p^n$ at the same weight and level. This was discovered by Calegari and Emerton in \cite{CE04}. See also Section 2.6 of \cite{KRW16} for a discussion of this example.

These inclusions might still be proper even if we remove the condition on the weight. There are examples of weak eigenforms of level $N$ which are not strong of level $N$ at any weight. For instance, as discussed in Section 2.5 of \cite{KRW16}, the mod $4$ reduction of the cuspform $E_4^3 \Delta + 2\Delta^3$ of weight $36$ and level $1$ is a weak eigenform mod $4$. But by a result of Hatada (\cite{Ha77}), every characteristic $0$ eigenform of level $1$ is congruent to $\Delta$, the unique cuspform of weight $12$ and level $1$. 

We still do not have any explicit example of a dc-weak eigenform which is not weak.

\section{Weight and nilpotence filtrations}
We keep the notation of the previous sections. Let $\set{T}^{sh}_w$ denote the $R$-subalgebra of endomorphisms of $D_w(R)$ generated by the operators $T_n$, $(n, pN_0) = 1$, and let \[\set{T}^{sh} = \varprojlim_w \set{T}^{sh}_w\] (the superscript ``sh" is introduced in \cite{Deo17} and stands for ``shallow"). The following theorem is well known.
\begin{theorem}\label{cnsl}The Hecke algebra $\set{T}$ is a $p$-adically complete Noetherian semilocal $\Z[p]$-algebra and factors into a product $\set{T} = \prod_{\idl{m}} \set{T}_{\idl{m}}$ where $\idl{m}$ runs over the finitely many maximal ideals of $\set{T}$ and $\set{T}_{\idl{m}}$ is the localisation of $\set{T}$ at $\idl{m}$. Each local component $\set{T}_{\idl{m}}$ is a complete Notherian local $\Z[p]$-algebra. 
\end{theorem}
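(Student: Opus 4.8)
The plan is to reduce everything to uniform statements about the finite-level algebras $\set{T}_w := \set{T}_w(\Z[p])$, exploiting the presentation $\set{T} = \varprojlim_w \set{T}_w$ with surjective transition maps. First I would record that each $\set{T}_w$ is module-finite over $\Z[p]$: by \cref{duality} it is the $\Z[p]$-linear dual of $D_w(\Z[p])$, a free $\Z[p]$-module of finite rank, and it is a $\Z[p]$-subalgebra of $\operatorname{End}_{\Z[p]}(D_w(\Z[p]))$. Hence $\set{T}_w$ is a complete Noetherian semilocal $\Z[p]$-algebra; being finite over the local ring $\Z[p]$, every maximal ideal of $\set{T}_w$ contains $p$, and $\bigcap_k p^k\set{T}_w = 0$ by Krull's intersection theorem. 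Moreover, the maximal ideals of $\set{T}_w$ are in natural bijection with the $\Gal(\Fc[p]/\F[p])$-orbits of systems of eigenvalues of $\{T_n : p\nmid n\}\cup\{U\}$ occurring on $D_w(\Fc[p])$, equivalently — by \cref{teqh} — on $S_{\leq w}(\Fc[p])$.

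Next I would show this set of maximal ideals is eventually constant in $w$. It is non-decreasing, since the restriction maps $\surjmor{\set{T}_{w+1}}{\set{T}_w}$ pull maximal ideals back to maximal ideals. It is uniformly bounded: by the Deligne--Serre lifting lemma every eigensystem occurring on $S_{\leq w}(\Fc[p])$ arises from a classical cuspidal eigenform of level $N$, and by Jochnowitz's theorem (\cite{Joc82}) every such eigensystem already occurs in weight at most $p^2+p$; so in all there are only finitely many such systems of Hecke eigenvalues (the $U$-eigenvalue being read off the coefficient $a_p$). Hence there are $w_0$ and a finite set $\idl{m}_1,\dots,\idl{m}_r$ that are exactly the maximal ideals of $\set{T}_w$, compatibly, for every $w\geq w_0$. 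For such $w$ one has $\set{T}_w = \prod_{i=1}^r (\set{T}_w)_{\idl{m}_i}$ compatibly with the transition maps, so $\set{T} = \prod_{i=1}^r \set{T}_{\idl{m}_i}$ with $\set{T}_{\idl{m}_i} := \varprojlim_w (\set{T}_w)_{\idl{m}_i}$ local, being an inverse limit of local rings along surjections. This shows at once that $\set{T}$ is semilocal with maximal ideals $\idl{m}_1,\dots,\idl{m}_r$ and that $\set{T}_{\idl{m}_i}$ is its localisation at $\idl{m}_i$. It also gives $p$-adic completeness: surjectivity of the transition maps makes the system Mittag--Leffler, so $\set{T}/p^k\set{T}\cong\varprojlim_w \set{T}_w/p^k\set{T}_w$; passing to the limit over $k$ and using that each $\set{T}_w$ is $p$-adically complete and separated yields $\set{T}\cong\varprojlim_k \set{T}/p^k\set{T}$ together with $\bigcap_k p^k\set{T}=0$.

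The remaining point — which I expect to be the main obstacle — is the Noetherianity of $\set{T}$, equivalently of each local factor $\set{T}_{\idl{m}}$; granting it, each $\set{T}_{\idl{m}}$ is automatically complete as a quotient of a complete Noetherian local ring, and the theorem follows. The route I would take is to exhibit $\set{T}_{\idl{m}}$ as a quotient of such a ring. The classical Galois representations attached to the cuspidal eigenforms contributing to $\set{T}_w$ assemble, by a standard gluing argument, into a continuous two-dimensional pseudo-representation (a Chenevier determinant) of $G_{\Q,Np}$ over $\set{T}_w$, compatibly in $w$, hence into one over $\set{T}_{\idl{m}}$ deforming the mod $p$ pseudo-representation attached to $\idl{m}$, with trace $T_\ell$ and determinant $T_\ell^2 - T_{\ell^2}$ at the Frobenius at $\ell\nmid Np$. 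Since $\set{T}_{\idl{m}}$ is topologically generated over $\Z[p]$ by these together with $U$ and the $T_\ell$ for $\ell\mid N_0$, it is a quotient of the associated universal deformation ring, enlarged to record $U$ and the local behaviour at the primes dividing $N_0$; and that ring is complete Noetherian local because $G_{\Q,Np}$ satisfies Mazur's finiteness condition $\Phi_p$. (A more hands-on alternative is to extract from the same Jochnowitz-type control of mod $p$ forms a weight-uniform Sturm-type bound, showing $\idl{m}$ is topologically finitely generated over $\Z[p]$, and deduce a continuous surjection $\surjmor{\Z[p][\![X_1,\dots,X_d]\!]}{\set{T}_{\idl{m}}}$; the care there is in matching topologies.) Either way the only genuinely non-formal inputs are the finiteness of mod $p$ cuspidal eigensystems of level $N$ (Jochnowitz) and the Noetherianity of the pertinent deformation ring; everything else is formal manipulation of the inverse limit.
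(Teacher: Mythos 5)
Your proposal follows essentially the same route as the paper's proof: finiteness of the maximal ideals via Jochnowitz's theorem, Noetherianity of the shallow (away-from-$Np$) local Hecke algebra via its realisation as a quotient of the universal deformation ring of the associated mod $p$ pseudo-representation, and then the observation that the full local Hecke algebra is topologically generated over the shallow one by $U$ and the $T_\ell$ for $\ell\mid N_0$. The paper states these steps very tersely (citing \cite{Kis09} for the deformation-theoretic Noetherianity), while you supply the formal inverse-limit and completeness bookkeeping and sketch the pseudo-representation gluing; the substance is identical.
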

\begin{proof}The maximal ideals of $\set{T}$ correspond to the $\Gal(\Fc[p]/\F[p])$-orbits of mod $p$ congruence classes of Hecke eigenforms of level $N$, of which there are only finitely many by a result of Jochnowitz (\cite{Joc82}). Let $\set{T}_{\idl{m}}$ be a local component of $\set{T}$ and denote by $\set{T}^{sh}_{\idl{m}}$ the corresponding local component of the shallow Hecke algebra. The theory of deformations of Galois (pseudo-) representations identifies $\set{T}^{sh}_{\idl{m}}$ with a quotient of the universal deformation ring of the corresponding mod $p$ Galois (pseudo-)representation. Thus $\set{T}^{sh}_{\idl{m}}$ is in particular topologically finitely generated as a $\Z[p]$-algebra and hence Noetherian (\cite{Kis09}). As $\set{T}_{\idl{m}}$ is topologically generated as a $\set{T}^{sh}_{\idl{m}}$-algebra by the operators $U$ and $T_{\ell}$ for $\ell | N_0$, it follows that $\set{T}$ is also Noetherian. 
\end{proof}

Let $\Tbar = \set{T}/p^n\set{T} \cong \set{T}(\Zmod{p}{n})$ (the last isomorphism follows by  \cref{heckebasechange}). We have a reduction map $\surjmor{\set{T}}{\Tbar}$. Thus by \cref{cnsl}, the mod $p^n$ Hecke algebra $\Tbar$ is also $p$-adically complete, Noetherian and semilocal with complete Noetherian local components. There is a one-to-one correspondence between the maximal ideals of $\Tbar$ and those of $\set{T}$ (obtained by reducing mod $p^n$).

For each $w \geq 0$, let $\Tbar_w = \set{T}_w(\Zmod{p}{n})$. Consider the projection map $\surjmor[pr_w]{\Tbar}{\Tbar_w}$. Since $\Tbar$ is semilocal, there exists an integer $c$ big enough so that, for any maximal ideal $\idl{m}$ of $\Tbar$, its image $pr_w(\idl{m})$ is a maximal ideal of $\Tbar_w$ whenever $w \geq c$. Each ring $\Tbar_w$ is $p$-adically complete, Noetherian and semilocal with complete Noetherian local components. Thus for each $w$ we can choose a labelling $\Tbar_w^{(i)}$, $1\leq i \leq r$, of the local components of $\Tbar_w$ such that the maximal ideal of $\Tbar_{w+1}^{(i)}$ maps to that of $\Tbar_w^{(i)}$. We get that
\[\Tbar \cong \varprojlim_{w\geq c} \Tbar_w = \varprojlim_{w\geq c}  \prod_{i=1}^r \Tbar_w^{(i)} = \prod_{i=1}^r \varprojlim_{w\geq c}  \Tbar_w^{(i)},\]
and so the local components of $\Tbar$ are given by
\[ \Tbar^{(i)} = \varprojlim_{w\geq c} \Tbar_w^{(i)} \]
for $1 \leq i \leq r$. 

There are two ways of viewing $\Tbar^{(i)}$ as a topological algebra. First, as $\Tbar^{(i)}_w$ is finite\footnote{Each $\Tbar_w$ is a finite ring (being a free $\Zmod{p}{n}$-module of finite rank by \cref{duality}).} for each $w$, we can give $\Tbar^{(i)}$ the profinite topology. Second, there is the natural topology generated by powers of the maximal ideal of $\Tbar^{(i)}_w$ which makes it a complete Noetherian semi-local ring. 

Let $R$ be a quotient of $\sh{O}$ in which $p^n = 0$. If we have a dc-weak eigenform $\mor[\varphi]{\set{T}}{R}$, then $\varphi$ must factor through $\Tbar_w^{(i)}$ for some $w$ and some $i$ because $R$ is a local ring with finite residue field. From now on, we let $\idl{m}$ denote the maximal ideal of $\set{T}$ corresponding to the local component $\Tbar^{(i)}$, and we write $\Tbar_{\idl{m}, w} = \Tbar_w^{(i)}$ and $\Tbar_{\idl{m}} = \Tbar^{(i)}$ and denote by $\clos{\idl{m}}$ the maximal ideal of $\Tbar_{\idl{m}}$. 

The ring $\Tbar_{\idl{m}, w}$ is Artinian, and it follows by Nakayama's lemma that there exists some integer $t \geq 0$ such that the image of $\clos{\idl{m}}^t$ in $\Tbar_{\idl{m}, w}$ is zero. In other words, the $\clos{\idl{m}}$-adic topology on $\Tbar_{\idl{m}}$ is finer than the profinite topology. But in our situation, these topologies are in fact the same.
\begin{proposition}\label{comptop} The profinite and $\idl{m}$-adic topologies on $\Tbar_{\idl{m}}$ coincide. 
\end{proposition}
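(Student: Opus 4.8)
The plan is to show the two topologies on $\Tbar_{\idl{m}}$ agree by exhibiting mutual refinements. One direction is already noted in the paragraph preceding the statement: since each $\Tbar_{\idl{m},w}$ is Artinian, some power $\clos{\idl{m}}^{t_w}$ dies in $\Tbar_{\idl{m},w}$, so every basic open of the profinite topology (a kernel of $\surjmor{\Tbar_{\idl{m}}}{\Tbar_{\idl{m},w}}$) contains $\clos{\idl{m}}^{t_w}\Tbar_{\idl{m}}$; hence the $\clos{\idl{m}}$-adic topology is finer than the profinite one. So the content is the reverse inclusion: every $\clos{\idl{m}}^{s}\Tbar_{\idl{m}}$ is open in the profinite topology, equivalently, for each $s$ there is a $w$ with $\ker(\surjmor{\Tbar_{\idl{m}}}{\Tbar_{\idl{m},w}}) \subseteq \clos{\idl{m}}^{s}\Tbar_{\idl{m}}$.

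First I would record that $\Tbar_{\idl{m}}$ is a complete Noetherian local ring (by \cref{cnsl} and the discussion following it) with finite residue field, and that it is $\clos{\idl{m}}$-adically separated, $\bigcap_s \clos{\idl{m}}^s = 0$, by Krull's intersection theorem. Next, the key observation: for each $s$, the quotient $\Tbar_{\idl{m}}/\clos{\idl{m}}^s$ is a finite ring (it is a module of finite length over $\Tbar_{\idl{m}}$ with finite residue field, hence finite as a set). Therefore $\clos{\idl{m}}^s$ is an open subgroup of $\Tbar_{\idl{m}}$ in the profinite topology — because the profinite topology is precisely the initial topology for the maps to all finite discrete quotients, and $\Tbar_{\idl{m}} \to \Tbar_{\idl{m}}/\clos{\idl{m}}^s$ is such a quotient map, its kernel being automatically open. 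Concretely, writing $\Tbar_{\idl{m}} = \varprojlim_{w} \Tbar_{\idl{m},w}$, the surjection onto the finite ring $\Tbar_{\idl{m}}/\clos{\idl{m}}^s$ must factor through some finite stage $\Tbar_{\idl{m},w}$ (a continuous map from a profinite ring to a finite discrete ring factors through a finite quotient), which gives exactly $\ker(\Tbar_{\idl{m}} \to \Tbar_{\idl{m},w}) \subseteq \clos{\idl{m}}^s$. Combining the two inclusions, the two topologies coincide.

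The step I expect to be the only real point requiring care is the claim that the continuous surjection $\surjmor{\Tbar_{\idl{m}}}{\Tbar_{\idl{m}}/\clos{\idl{m}}^s}$ factors through one of the finite stages $\Tbar_{\idl{m},w}$. This is where one uses that $\Tbar_{\idl{m}} = \varprojlim_w \Tbar_{\idl{m},w}$ as a projective limit of \emph{finite} rings (the transition maps being surjective) together with the fact that a continuous homomorphism from a profinite ring to a finite discrete ring has open kernel, and open subgroups of $\varprojlim_w \Tbar_{\idl{m},w}$ contain some $\ker(\Tbar_{\idl{m}} \to \Tbar_{\idl{m},w})$ by definition of the projective limit topology. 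Everything else is bookkeeping: that $\Tbar_{\idl{m}}/\clos{\idl{m}}^s$ is finite, and that $\clos{\idl{m}}$-adic separatedness lets one conclude the topologies are equal rather than merely comparable in one direction. I would also remark that this proposition is exactly what justifies freely passing between "$\varphi$ is continuous for the profinite topology" and "$\varphi$ is continuous for the $\clos{\idl{m}}$-adic topology" when manipulating dc-weak eigenforms $\mor{\Tbar_{\idl{m}}}{R}$ in the sequel.
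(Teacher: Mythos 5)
Your overall strategy is sound and is probably close in spirit to Medvedovsky's cited Proposition 2.9, but as written there is a real gap at the decisive step, and it is exactly the point where the Noetherian hypothesis --- which the paper flags as crucial --- actually does the work. You claim that $\clos{\idl{m}}^s$ is open in the profinite topology ``because the profinite topology is precisely the initial topology for the maps to all finite discrete quotients, and $\Tbar_{\idl{m}} \to \Tbar_{\idl{m}}/\clos{\idl{m}}^s$ is such a quotient map, its kernel being automatically open.'' This is not a theorem about profinite rings: a finite-index ideal of a profinite commutative ring need not be open. (In $\prod_{\set{N}} \F[p]$ with the product topology, any non-principal ultrafilter on $\set{N}$ cuts out a dense ideal of index $p$, which is therefore not open.) The finite discrete quotients built into the profinite topology on $\Tbar_{\idl{m}} = \varprojlim_w \Tbar_{\idl{m},w}$ are precisely the quotients by \emph{open} ideals, so asserting that $\Tbar_{\idl{m}}/\clos{\idl{m}}^s$ is among them is exactly what you need to prove; it does not follow from finiteness of the quotient alone.

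The way to close the gap is to first show that $\clos{\idl{m}}^s$ is \emph{closed}, and this is where Noetherianity enters. Since $\Tbar_{\idl{m}}$ is Noetherian, $\clos{\idl{m}}$ is finitely generated, say $\clos{\idl{m}} = (x_1,\dots,x_k)$, and then $\clos{\idl{m}}^s = \sum_{i_1,\dots,i_s} x_{i_1}\cdots x_{i_s}\Tbar_{\idl{m}}$ is a \emph{finite} sum of continuous images of the compact space $\Tbar_{\idl{m}}$ under the maps $t\mapsto x_{i_1}\cdots x_{i_s}\,t$, hence compact, hence closed in the Hausdorff profinite topology. Combined with your observation that $\Tbar_{\idl{m}}/\clos{\idl{m}}^s$ is finite --- which again uses Noetherianity, to guarantee each $\clos{\idl{m}}^j/\clos{\idl{m}}^{j+1}$ is finite-dimensional over the finite residue field --- this makes $\clos{\idl{m}}^s$ a closed subgroup of finite index in a compact group; its complement is a finite union of closed cosets, so $\clos{\idl{m}}^s$ is open. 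From there the rest of your argument goes through: an open ideal of $\varprojlim_w \Tbar_{\idl{m},w}$ contains $\ker(\Tbar_{\idl{m}}\to\Tbar_{\idl{m},w})$ for some $w$ by definition of the projective limit topology, and the easy direction you recalled from the paragraph preceding the statement handles the other inclusion. So the argument is salvageable, but as written it invokes Noetherianity only for the finiteness of the quotient and not for the closedness of $\clos{\idl{m}}^s$, which is where the real content lies.
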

\begin{proof} The profinite topology on $\Tbar_{\idl{m}}$ is generated by a basis of open neighbourhoods of $0$ consisting of the kernels of the projection maps $\surjmor{\Tbar_{\idl{m}}}{\Tbar_{w, \idl{m}}}$. The $\clos{\idl{m}}$-adic topology has as a basis of open neighbourhoods of $0$ the powers $\clos{\idl{m}}^t$ of the maximal ideal. Thus the statement of the proposition is equivalent to the statement that for any $w$ there exists some $t$ such that $\clos{\idl{m}}^t \subset \ker (\surjmor{\Tbar_{\idl{m}}}{\Tbar_{w, \idl{m}}})$ and for any $t'$ there exists a $w'$ such that $\ker (\surjmor{\Tbar_{\idl{m}}}{\Tbar_{w', \idl{m}}}) \subset \clos{\idl{m}}^{t'}$. A proof can be found in Medvedovsky's PhD thesis (\cite{Med15}, Proposition 2.9), and crucially uses the fact that $\Tbar_{\idl{m}}$ is Noetherian. \end{proof}
We will now define the weight and the nilpotence filtrations. 
\begin{definition} Let $\varphi$ be a dc-weak eigenform with coefficients in $R$. The weight filtration $\omega(\varphi)$ of $\varphi$ is the smallest weight $w$ such that $\varphi$ factors through ${\Tbar}_{\idl{m},w}$. The nilpotence filtration $\nu(\varphi)$ of $\varphi$ is the smallest $t$ such that $\varphi(\idl{m}^t) = 0$. \end{definition}
Because the profinite and the $\clos{\idl{m}}$-adic topologies on $\Tmbar$ agree, these two filtrations control each other, and we can relate them as follows.
\begin{corollary}\label{compfilt} There exists non-decreasing functions $\gamma, \delta: \mor{\set{N}}{\set{N}}$ such that, for any $\varphi$ as above, 
\[ \gamma(\nu(\varphi)) \geq \omega(\varphi) \indent\mbox{and}\indent \delta(\omega(\varphi)) \geq \nu(\varphi).\] 
\end{corollary}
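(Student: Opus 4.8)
The plan is to translate both filtrations into containments among a single nested family of ideals of $\Tmbar$ together with the powers $\clos{\idl{m}}^t$ of its maximal ideal, at which point \cref{comptop} yields $\gamma$ and $\delta$ directly. First I would recall that any dc-weak eigenform $\varphi$ with coefficients in a quotient $R$ of $\sh{O}$ with $p^n = 0$ factors as $\surjmor{\set{T}}{\Tbar}$ followed by $\mor{\Tbar}{\Tmbar}$ and an induced homomorphism $\mor[\bar\varphi]{\Tmbar}{R}$, and that $\omega(\varphi)$ and $\nu(\varphi)$ depend only on $\bar\varphi$. Writing $J_w = \ker(\surjmor{\Tmbar}{\Tbar_{\idl{m},w}})$ for $w \geq c$, these ideals satisfy $J_{w+1} \subseteq J_w$ and $\bigcap_w J_w = 0$ because $\Tmbar = \varprojlim_w \Tbar_{\idl{m},w}$; since $\Tbar_{\idl{m},w} = \Tmbar/J_w$, the map $\bar\varphi$ factors through $\Tbar_{\idl{m},w}$ precisely when $J_w \subseteq \ker\bar\varphi$. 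Using moreover that the image of $\idl{m}^t$ in $\Tmbar$ generates $\clos{\idl{m}}^t$, one gets
\[ \omega(\varphi) = \min\{w \geq c : J_w \subseteq \ker\bar\varphi\}, \qquad \nu(\varphi) = \min\{t : \clos{\idl{m}}^t \subseteq \ker\bar\varphi\}. \]

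Next I would set
\[ \gamma(t) := \min\{w \geq c : J_w \subseteq \clos{\idl{m}}^t\}, \qquad \delta(w) := \min\{t : \clos{\idl{m}}^t \subseteq J_w\} \]
for $t \in \set{N}$ and $w \geq c$, extending $\delta$ to all of $\set{N}$ by $\delta(w) := \delta(c)$ for $w < c$. Each minimum is taken over a non-empty set, and this is exactly the content of \cref{comptop} (equality of the profinite and $\clos{\idl{m}}$-adic topologies on $\Tmbar$): for every $w$ some power $\clos{\idl{m}}^t$ lies in $J_w$, and for every $t$ some $J_{w'}$ lies in $\clos{\idl{m}}^t$. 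Since $J_w$ is decreasing in $w$ and $\clos{\idl{m}}^t$ is decreasing in $t$, the sets $\{w : J_w \subseteq \clos{\idl{m}}^t\}$ and $\{t : \clos{\idl{m}}^t \subseteq J_w\}$ shrink as $t$, respectively $w$, grows, so $\gamma$ and $\delta$ are non-decreasing. Finally, if $\omega(\varphi) = w$ then $J_w \subseteq \ker\bar\varphi$, so $\clos{\idl{m}}^{\delta(w)} \subseteq J_w \subseteq \ker\bar\varphi$ gives $\nu(\varphi) \leq \delta(w) = \delta(\omega(\varphi))$; symmetrically, if $\nu(\varphi) = t$ then $\clos{\idl{m}}^t \subseteq \ker\bar\varphi$, so $J_{\gamma(t)} \subseteq \clos{\idl{m}}^t \subseteq \ker\bar\varphi$ gives $\omega(\varphi) \leq \gamma(t) = \gamma(\nu(\varphi))$.

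I do not expect a genuine obstacle here. All the substance — that every $J_w$ contains a power of $\clos{\idl{m}}$ and every power $\clos{\idl{m}}^t$ contains some $J_{w'}$, which rests on $\Tmbar$ being Noetherian — is precisely \cref{comptop}, and the remainder is the bookkeeping above. The only points demanding a little care are the threshold $c$ (so that $\Tbar_{\idl{m},w}$, and hence $\omega(\varphi)$, are defined) and the harmless extension of $\gamma$ and $\delta$ to all of $\set{N}$ preserving monotonicity; and if one wants a single pair $\gamma,\delta$ valid simultaneously for eigenforms attached to any of the maximal ideals of $\set{T}$, one takes the maximum over the finitely many maximal ideals, which again preserves monotonicity.
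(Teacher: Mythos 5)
Your proof is correct and follows essentially the same route as the paper: the paper's proof simply writes down $\delta(w) := \inf\{t : \clos{\idl{m}}^t \subset \ker(\surjmor{\Tbar_{\idl{m}}}{\Tbar_{\idl{m},w}})\}$ and $\gamma(t) := \inf\{w : \ker(\surjmor{\Tbar_{\idl{m}}}{\Tbar_{\idl{m},w}}) \subset \clos{\idl{m}}^t\}$, citing \cref{comptop} for the finiteness, and leaves the verification of monotonicity and the two inequalities to the reader. You have supplied exactly that verification, plus the (correctly handled) bookkeeping about the threshold $c$, so no further changes are needed.
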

\begin{proof} By \cref{comptop}, we can define $\gamma$ and $\delta$ by
\[ \delta(w) := \inf \{ t : \clos{\idl{m}}^t \subset \ker \left(\surjmor{\Tbar_{\idl{m}}}{\Tbar_{\idl{m},w}}\right) \}, \]
\[ \gamma(t) := \inf \{ w :  \ker \left(\surjmor{\Tbar_{\idl{m}}}{\Tbar_{\idl{m},w}}\right) \subset \clos{\idl{m}}^t \}. \]
\end{proof}
We conclude that in order to find a dc-weak eigenform $\varphi$ such that $\omega(\varphi)$ is very large, it is enough to make $\nu(\varphi)$ very large. 

\section{dc-weak eigenforms of large filtrations}
Let $\clos{f}$ be an eigenform of level $N = p^r N_0$ ($p \nmid N_0$ and $r \geq 1$) with coefficients in $\F[p]$ and $\clos{\rho}$ the Galois representation attached to it. We say that $\clos{\rho}$ is new away from $p$ if its away-from-$p$ conductor is $N_0$. Let $\mor[\varphi]{\set{T}}{\F[p]}$ be the associated algebra homomorphism, and $\idl{m} = \ker \varphi$ the associated maximal ideal. The natural map $\injmor{\set{T}^{pf}}{\set{T}}$ pulls back $\idl{m}$ to a maximal ideal $\idl{m}^\ast$ of $\set{T}^{pf}$, and we let $\set{T}^{pf}_{\idl{m}}$ denote the localisation of $\set{T}^{pf}$ at $\idl{m}^{\ast}$. The key ingredient that we will use is the following result.
\begin{theorem}\label{gmthm} Assume that $\clos{\rho}$ is irreducible, new away from $p$, and that its deformation problem is unobstructed. Then the Hecke algebra $\set{T}^{pf}_{\idl{m}}$ is isomorphic to a finite integral extension of $\Z[p][\![X,Y,Z]\!]$ (where $X$, $Y$, $Z$ are parameters). 
\end{theorem}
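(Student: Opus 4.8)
The plan is to identify the Hecke algebra $\set{T}^{pf}_{\idl{m}}$ with a deformation ring and then read off its structure from the unobstructedness hypothesis. First I would recall that, by the theory of pseudo-deformations (Chenevier, Bellaïche–Khare), the shallow Hecke algebra $\set{T}^{sh}_{\idl{m}}$ is a quotient of the universal deformation ring $R^{\mathrm{ps}}$ of the residual pseudo-representation $\bar{\rho}$ (as already invoked in the proof of \cref{cnsl}); since $\bar\rho$ is irreducible, this pseudo-deformation ring coincides with the usual universal deformation ring $R_{\bar\rho}$ of $\bar\rho$ as a genuine representation. Under the hypothesis that the deformation problem of $\bar\rho$ is unobstructed — i.e. $H^2(G_{\Q,S},\mathrm{ad}\,\bar\rho)=0$ and $\dim H^1(G_{\Q,S},\mathrm{ad}\,\bar\rho)=3$, which for $\GL_2$ over $\Q$ is the generic expectation (Mazur, Weston) — the ring $R_{\bar\rho}$ is a power series ring $\Z[p][\![X,Y,Z]\!]$ in three variables. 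The key input beyond this is an $R=\mathbb{T}$ type statement, adapted to the partially full level-$N$ setting: because $\bar\rho$ is new away from $p$ (its away-from-$p$ conductor is exactly $N_0$), the natural surjection $R_{\bar\rho}\twoheadrightarrow \set{T}^{sh}_{\idl{m}}$ is in fact an isomorphism — there are no extra local conditions at primes dividing $N_0$ to impose, and the relevant Galois cohomology obstruction again vanishes by unobstructedness. This gives $\set{T}^{sh}_{\idl{m}}\cong \Z[p][\![X,Y,Z]\!]$.

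Next I would pass from $\set{T}^{sh}_{\idl{m}}$ to $\set{T}^{pf}_{\idl{m}}$. The algebra $\set{T}^{pf}$ is generated over $\set{T}^{sh}$ by the finitely many extra Hecke operators $T_\ell$ for $\ell\mid N_0$ (the operators at $p$ are excluded since we are in the ``pf'' rather than the full algebra). Each such $T_\ell$ satisfies a monic polynomial relation over $\set{T}^{sh}_{\idl{m}}$: concretely, on the relevant space of divided congruences the operator $T_\ell$ acts with a characteristic polynomial whose coefficients lie in $\set{T}^{sh}_{\idl{m}}$, so $\set{T}^{pf}_{\idl{m}}$ is a finitely generated $\set{T}^{sh}_{\idl{m}}$-module and hence a finite (integral) extension of $\Z[p][\![X,Y,Z]\!]$. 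Combining the two paragraphs yields the claim.

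The main obstacle I expect is making precise and justifying the isomorphism $R_{\bar\rho}\xrightarrow{\sim}\set{T}^{sh}_{\idl{m}}$ in the divided-congruence / $p$-in-the-level setting, rather than the more familiar setting of classical newforms of level prime to $p$. One must check that every deformation of $\bar\rho$ unramified outside $Np$ (with no condition imposed at $p$, matching the fact that divided congruences of level $Np^r$ see all such deformations) is captured by a point of $\set{T}^{sh}_{\idl{m}}$, i.e. that the Galois pseudo-representation carried by $\set{T}^{sh}_{\idl{m}}$ is the universal one; this is exactly where the new-away-from-$p$ hypothesis and unobstructedness are used, and where I would lean on the references \cite{BK15}, \cite{Deo17} (and the pseudo-deformation formalism of \cite{Che14}) to supply the comparison. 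A secondary, more routine point is verifying that the $T_\ell$, $\ell\mid N_0$, are genuinely integral over $\set{T}^{sh}_{\idl{m}}$ with an explicit monic relation; this should follow from finiteness of $D_w(\Z[p])$ over $\Z[p]$ together with the duality of \cref{duality}, taking limits carefully as in \cref{heckebasechange}.
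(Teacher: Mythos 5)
Your two-step strategy (first show $\set{T}^{sh}_{\idl{m}}$ is a power series ring, then show $\set{T}^{pf}_{\idl{m}}$ is finite over it) is the same decomposition the paper uses, and your sketch of the first step is essentially the Gouv\^ea--Mazur argument (which the paper cites via Yamagami \cite{Yam03}, together with \cref{teqh} to pass between the $D(\Z[p])$-Hecke algebra and the $S(\Z[p])$-Hecke algebra). But there are two problems with how you distribute the hypotheses and justify the second step.

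First, you invoke ``new away from $p$'' in the $R=\set{T}$ step, saying it ensures there are ``no extra local conditions at primes dividing $N_0$ to impose.'' That is a misattribution: the Gouv\^ea--Mazur/Yamagami theorem that $\set{T}^{sh}_{\idl{m}}$ is $\Z[p][\![X,Y,Z]\!]$ needs only irreducibility and unobstructedness; the shallow algebra already corresponds to the unrestricted (away from $p$) deformation problem, and the ``new away from $p$'' hypothesis does no work there. In the paper it is used exclusively in the second step.

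Second, and this is the genuine gap, your justification of the integrality of $T_\ell$ (for $\ell\mid N_0$) over $\set{T}^{sh}_{\idl{m}}$ does not work as stated, and you are wrong to call it routine. You write that $T_\ell$ ``acts with a characteristic polynomial whose coefficients lie in $\set{T}^{sh}_{\idl{m}}$'' on ``the relevant space of divided congruences.'' But $D(\Z[p])$ is not of finite rank; $T_\ell$ acting on $D_w(\Z[p])$ has a characteristic polynomial whose degree grows with $w$, so finiteness of each $D_w$ plus duality (your proposed fix) only gives weight-by-weight annihilating polynomials of unbounded degree, not a single monic relation over $\set{T}^{sh}_{\idl{m}}$. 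The actual finiteness statement is Proposition 20 of \cite{Deo17} (the level-$N$ analogue of Proposition 17 of \cite{BK15}), and it is proved Galois-theoretically: precisely because $\clos{\rho}$ is new away from $p$, the local (pseudo-)representation at each $\ell\mid N_0$ is constrained enough that $T_\ell$ is pinned down integrally by the traces $T_q$, $q\nmid pN_0$. Without that hypothesis the statement is false in general, and without that Galois input your module-theoretic route does not close.
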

\begin{proof} Let $\set{T}_{\idl{m}}^{sh}$ be the shallow subalgebra of $\set{T}^{pf}_\idl{m}$. Combining a result of Yamagami (\cite{Yam03}), generalising\footnote{The Gouv\^ea-Mazur result assumes that $f$ has coefficients in $\Z[p]$ and is of non-critical slope. Yamamagi's result allows the coefficients of $f$ to lie in any finite extension of $\Z[p]$ and removes restrictions on slope.} results of Gouv\^ea-Mazur (\cite{GM98}), together with \cref{teqh}, we find that $\set{T}^{sh}_{\idl{m}}$ is a power series ring in three parameters. Since $\clos{\rho}$ is new away from $p$, Proposition 20 of \cite{Deo17} tells us that $\set{T}^{pf}_{\idl{m}}$ is a finite integral extension of $\set{T}_ {\idl{m}}^{sh}$. \end{proof}
\begin{corollary}\label{heckeps} Assume the conditions of \cref{gmthm}. Assume additionally that $U(\clos{f})=0$. Then the algebra $\set{T}_{\idl{m}}$ is isomorphic to a finite integral extension of $\Z[p][\![X,Y,Z,U]\!]$. 
\end{corollary}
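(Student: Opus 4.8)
The plan is to deduce the structure of $\set{T}_{\idl{m}}$ from that of $\set{T}^{pf}_{\idl{m}}$ by adjoining the single operator $U$, using the extra hypothesis $U(\clos{f}) = 0$. By \cref{gmthm} we know $\set{T}^{pf}_{\idl{m}}$ is a finite integral extension of $\Z[p][\![X,Y,Z]\!]$, and $\set{T}_{\idl{m}}$ is generated over $\set{T}^{pf}_{\idl{m}}$ by $U$ (this is immediate from the definitions of $\set{T}$ and $\set{T}^{pf}$, since the former adjoins $U$ to the latter, and localisation commutes with this). So $\set{T}_{\idl{m}} = \set{T}^{pf}_{\idl{m}}[U]$, a quotient of the polynomial ring $\set{T}^{pf}_{\idl{m}}[Z']$ by the kernel of the evaluation map $Z' \mapsto U$. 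The crux of the argument is to identify this kernel, or at least to show that $U$ is ``new'' in the sense that it is not integral over $\set{T}^{pf}_{\idl{m}}$ — equivalently that $\set{T}^{pf}_{\idl{m}}[U] \cong \set{T}^{pf}_{\idl{m}}[\![U]\!]$ after completion, so that $\set{T}_{\idl{m}}$ becomes a finite integral extension of $\Z[p][\![X,Y,Z,U]\!]$.

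First I would record that $\set{T}_{\idl{m}}$ is a complete Noetherian local ring (by \cref{cnsl}, or rather its analogue for the localisation) and that the residue field is $\F[p]$, with $U$ mapping to $U(\clos{f}) = 0$ in the residue field by the added hypothesis; hence $U$ lies in the maximal ideal $\idl{m}$. Thus the surjection $\set{T}^{pf}_{\idl{m}}[\![Z']\!] \twoheadrightarrow \set{T}_{\idl{m}}$ sending $Z' \mapsto U$ is a map of complete local rings over $\set{T}^{pf}_{\idl{m}}$. Composing with the finite integral structure map $\Z[p][\![X,Y,Z]\!] \hookrightarrow \set{T}^{pf}_{\idl{m}}$ we get a finite (hence continuous, by completeness and the topological Nakayama lemma) surjection $\Z[p][\![X,Y,Z,Z']\!] \twoheadrightarrow \set{T}_{\idl{m}}$ where $Z'$ is a fourth parameter mapping to $U$. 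The content of the corollary is then precisely that this map is \emph{injective}, i.e.\ that $\set{T}_{\idl{m}}$ is a domain of the expected Krull dimension $5$.

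The main obstacle, and the step that needs the substance of the argument, is showing that adjoining $U$ genuinely increases the dimension, i.e.\ that $\set{T}_{\idl{m}}$ is not a finite $\set{T}^{pf}_{\idl{m}}$-module. I would argue via the interpretation of $\set{T}^{pf}_{\idl{m}}$ as (a finite integral extension of) a deformation ring, where the three parameters $X, Y, Z$ account for the unobstructed deformation together with the $p$-level structure, while the $U$-eigenvalue on a $p$-stabilisation is an additional free parameter — concretely, for each $p$-adic deformation there is a genuine one-parameter family of choices of $U_p$-eigenvalue (the two roots of the Hecke polynomial $X^2 - a_p X + \text{(nebentypus)}$ degenerate, in the divided-congruence / generalised-$p$-adic-modular-forms setting, to a free variable when $p \mid N$). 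Appealing to \cref{teqh} and the explicit description of the $U$-action on $D(\Z[p])$ — in particular that $a_1(U^r T_n f) = a_{np^r}(f)$ shows the operators $U^r T_n$, $p \nmid n$, $r \geq 0$, are $\Z[p]$-linearly independent as functionals — one sees that $U$ satisfies no polynomial relation over $\set{T}^{pf}_{\idl{m}}$. Hence the completed ring $\set{T}^{pf}_{\idl{m}}[\![U]\!]$ injects into $\set{T}_{\idl{m}}$, the surjection above from $\set{T}^{pf}_{\idl{m}}[\![Z']\!]$ is an isomorphism, and combining with \cref{gmthm} gives that $\set{T}_{\idl{m}}$ is a finite integral extension of $\Z[p][\![X,Y,Z,U]\!]$, as claimed. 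The hypothesis $U(\clos{f}) = 0$ enters both to place $U$ in the maximal ideal (so that the power series variable $U$ is topologically nilpotent and the ring is genuinely local at the relevant point) and to pin down which local component we are working in.
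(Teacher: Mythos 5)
Your plan coincides with the paper's: both reduce the statement to the key isomorphism of $\set{T}^{pf}_{\idl{m}}$-algebras
\[
\set{T}^{pf}_{\idl{m}}[\![U]\!] \;\cong\; \set{T}_{\idl{m}},
\]
and then invoke \cref{gmthm}. The hypothesis $U(\clos{f})=0$ enters in both to put $U$ in the maximal ideal, so that the surjection $\set{T}^{pf}_{\idl{m}}[\![U]\!]\twoheadrightarrow \set{T}_{\idl{m}}$ is a map of complete local rings. The difference is that the paper obtains this isomorphism by citing Proposition~17 of \cite{BK15} (transported to level $N$), whereas you attempt to re-derive it, and that is where your argument has a genuine gap.

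The gap is in the step where you claim that the identity $a_1(U^r T_n f) = a_{np^r}(f)$, which shows the operators $U^r T_n$ ($p\nmid n$, $r\geq 0$) are $\Z[p]$-linearly independent as functionals on $D(\Z[p])$, ``shows that $U$ satisfies no polynomial relation over $\set{T}^{pf}_{\idl{m}}$.'' This does not follow, for two reasons. First, the linear independence you quote holds in the full Hecke algebra $\set{T}$, but $\set{T}_{\idl{m}}$ is a direct factor of $\set{T}$ (by \cref{cnsl}), and elements that are independent in a product can become dependent in a single factor; one must show independence on the $\idl{m}$-local part of $D(\Z[p])$, which is precisely what the cited Proposition~17 of \cite{BK15} establishes by analysing the structure of the $\idl{m}$-generalised eigenspace. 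Second, even granting linear independence in $\set{T}_{\idl{m}}$, a polynomial relation $\sum_i a_i U^i = 0$ with $a_i\in\set{T}^{pf}_{\idl{m}}$ is not a finite $\Z[p]$-linear relation among the $U^r T_n$: each $a_i$ is only a topological limit of $\Z[p]$-linear combinations of $T_n$'s, so the expansion produces a ``doubly infinite'' sum whose convergence and rearrangement need to be controlled before you can invoke linear independence. The remaining steps of your proposal (placing $U$ in the maximal ideal via $U(\clos{f})=0$, reducing to surjectivity-plus-no-relation for $\Z[p][\![X,Y,Z,U]\!]\twoheadrightarrow\set{T}_{\idl{m}}$) are sound, but they stand or fall with the missing input that the paper supplies by citation.
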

\begin{proof} By imitating the proof of Proposition 17 of \cite{BK15} in the case of level $N$, we obtain a natural isomorphism of $\set{T}^{pf}_{\idl{m}}$-algebras $\set{T}^{pf}_{\idl{m}}[\![U]\!] \cong \set{T}_{\idl{m}}$. Combining this with \cref{gmthm} gives the required isomorphism.
\end{proof}

\begin{proposition}\label{constprop} Assume the conditions of \cref{heckeps}. Let $n \geq 2$. For any integer $d \geq 0$, there exists a dc-weak eigenform $\varphi_n$ mod $p^n$ of level $p$ lifting $\clos{f}$ and such that $\omega(\varphi_n) > d$. 
\end{proposition}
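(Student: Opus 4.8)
The plan is to feed the explicit description of $\set{T}_{\idl{m}}$ from \cref{heckeps} into the filtration comparison of \cref{compfilt}. Concretely, by \cref{compfilt} there is a non-decreasing $\delta\colon\mor{\set{N}}{\set{N}}$ with $\delta(\omega(\varphi))\geq\nu(\varphi)$ for every dc-weak eigenform $\varphi$, so it suffices, given $d$, to construct $\varphi_{n}$ lifting $\clos{f}$ with $\nu(\varphi_{n})>\delta(d)$: then $\delta(\omega(\varphi_{n}))\geq\nu(\varphi_{n})>\delta(d)$ and, $\delta$ being non-decreasing, $\omega(\varphi_{n})>d$. Note that with the fixed coefficient ring $\Zmod{p}{n}$ the nilpotence filtration is bounded by $n$ — the image of $\clos{\idl{m}}$ lies in $(p)$, so the image of $\clos{\idl{m}}^{\,n}$ lies in $(p^{n})=0$ — so it will be essential to work with ramified coefficient rings.

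Next I would fix a large integer $e$, let $\sh{O}$ be the totally ramified extension of $\Z[p]$ of degree $e$ with uniformiser $\pi$ satisfying $\pi^{e}=p$, and set $R=\sh{O}/\pi^{e(n-1)+1}$; this is a quotient of $\sh{O}$ containing $\Zmod{p}{n}$ in which $p^{n}=0$ but $p^{n-1}\neq0$, and whose maximal ideal $(\pi)$ has nilpotency index exactly $e(n-1)+1$. Because we are in level $p$ we have $N_{0}=1$, hence $\set{T}^{pf}=\set{T}^{sh}$, so the proof of \cref{gmthm} gives $\set{T}^{pf}_{\idl{m}}=\set{T}^{sh}_{\idl{m}}\cong\Z[p][\![X,Y,Z]\!]$, and \cref{heckeps} then yields
\[ \set{T}_{\idl{m}}\;\cong\;\set{T}^{pf}_{\idl{m}}[\![U]\!]\;\cong\;\Z[p][\![X,Y,Z,U]\!], \]
under which $\clos{\idl{m}}$ corresponds to $(p,X,Y,Z,U)$ and the homomorphism attached to $\clos{f}$ is the augmentation killing $X,Y,Z,U$ and $p$. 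I then define $\varphi_{n}\colon\set{T}_{\idl{m}}\to R$ by $X\mapsto\pi$ and $Y,Z,U\mapsto0$, and extend it to $\set{T}=\prod_{\idl{m}'}\set{T}_{\idl{m}'}$ through the projection onto the $\idl{m}$-component.

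To finish I would check that $\varphi_{n}$ is a bona fide dc-weak eigenform and pin down $\nu(\varphi_{n})$. A monomial in $p,X,Y,Z,U$ of total degree $e(n-1)+1$ maps under $\varphi_{n}$ either to $0$ (if it involves $Y$, $Z$ or $U$) or to $\pi^{ea+b}$ with $a+b=e(n-1)+1$, hence to $0$ since $ea+b\geq a+b$; so $(p,X,Y,Z,U)^{e(n-1)+1}\subseteq\ker\varphi_{n}$, whence $\varphi_{n}$ factors through the finite ring $\set{T}_{\idl{m}}/(p,X,Y,Z,U)^{e(n-1)+1}$ and therefore through $\Tbar_{\idl{m},w}$ for some $w$ — so $\varphi_{n}$ is indeed a dc-weak eigenform mod $p^{n}$ of level $p$, and reducing it modulo $(\pi)$ returns the augmentation, so it lifts $\clos{f}$. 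Since $\varphi_{n}(X^{e(n-1)})=\pi^{e(n-1)}\neq0$ in $R$, the image of $\clos{\idl{m}}^{\,e(n-1)}$ is nonzero, while by the estimate just made the image of $\clos{\idl{m}}^{\,e(n-1)+1}$ vanishes; thus $\nu(\varphi_{n})=e(n-1)+1$. Choosing $e$ with $e(n-1)+1>\delta(d)$ — possible because $n-1\geq1$ — concludes the proof.

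The genuinely load-bearing ingredients are \cref{heckeps}, which supplies the explicit power-series structure of $\set{T}_{\idl{m}}$ that lets the image of the maximal ideal be pushed into an arbitrarily high power of $(\pi)$, and \cref{compfilt}, which turns the nilpotence-filtration bound into the desired weight-filtration bound. The step I expect to need the most care — though it is more bookkeeping than a real difficulty — is confirming that $\varphi_{n}$, written down from the abstract presentation, is actually a dc-weak eigenform, i.e. continuous and factoring through some finite-weight $\set{T}_{w}$; this uses the monomial estimate above together with the coincidence of the profinite and $\clos{\idl{m}}$-adic topologies on $\Tbar_{\idl{m}}$ established in \cref{comptop}.
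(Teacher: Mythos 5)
Your proof is correct and takes essentially the same approach as the paper's: reduce via \cref{compfilt} to producing a dc-weak eigenform with large nilpotence filtration, and build one by sending a power-series parameter of $\set{T}_{\idl{m}}$ to the uniformiser $\pi$ of a highly ramified extension, with $e$ chosen large. The one genuine refinement you add is the observation that $N_0=1$ forces $\set{T}^{pf}_{\idl{m}}=\set{T}^{sh}_{\idl{m}}$, so \cref{gmthm} and \cref{heckeps} give $\set{T}_{\idl{m}}\cong\Z[p][\![X,Y,Z,U]\!]$ \emph{exactly} rather than merely a finite integral extension thereof; this lets you define $\varphi_n$ directly (rather than extending a map along the finite integral extension, as in the paper) and check explicitly via \cref{comptop} that $\varphi_n$ factors through some $\Tbar_{\idl{m},w}$ and hence really is a dc-weak eigenform, a point the paper leaves implicit.
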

\begin{proof} By \cref{compfilt}, it suffices to exhibit a dc-weak eigenform mod $p^n$ whose nilpotence filtration is very large. Let $\sh{O}$ be the ring of integers of a finite extension of $\Q[p]$ with ramification index $e \gg 0$. By \cref{heckeps},  any continuous map
\[ \mor[\varphi_n]{\left(\Zmod{p}{n}\right)[\![X,Y,Z,U]\!]}{\sh{O}/\pi^{e(n-1)+1}\sh{O}} \]
can be extended to a map
\[ \mor[\tilde{\varphi}_n]{\set{T}_{\idl{m}}}{\Zmodbar{p}{n}}. \]
We can pick $\varphi_n$ such that $\pi \in \tilde{\varphi}_n(\idl{m})$ (for example, by sending one of the parameters to $\pi$). Then we would have that $\nu(\tilde{\varphi_n}) > e$, since $\tilde{\varphi}_n(\idl{m}^e)$ would contain $\pi^e = p$, and $p \not = 0$ because we assumed $n\geq 2$. Therefore we only need to pick a suitably large value for $e$, which we certainly can do.
\end{proof}
Thus we now have the main result which we will prove by exhibiting an explicit construction.
\begin{theorem}\label{mainthm} Suppose $p \geq 13$. Then for all integers $n\geq 2$, $d \geq 0$, there exists a mod $p^n$ dc-weak eigenform $f \in D_{w}(\Zmodbar{p}{n})$, such $w > d$ and $f \not \in D_{w'}(\Zmodbar{p}{n})$ for any $w' < w$. 
\end{theorem}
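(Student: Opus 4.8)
The plan is to deduce the theorem from \cref{constprop}, which already produces dc-weak eigenforms mod $p^n$ of level $p$ with arbitrarily large weight filtration; what remains is to exhibit a mod $p$ eigenform $\clos f$ of level $p$ satisfying the hypotheses of \cref{heckeps}, and then to translate the statement ``$\omega(\varphi_n)$ is large'' into ``the divided congruence attached to $\varphi_n$ does not lie in $D_{w'}$ for any smaller $w'$''.

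First I would produce $\clos f$. Take a cuspidal eigenform $f$ of level $1$ that is ordinary at $p$ — concretely $f = \Delta$, the weight $12$ cusp form, since $\tau(p)\not\equiv 0\pmod p$ for all primes $p\ge 13$ that arise (and in any case one may substitute another level-$1$ form). Let $\clos f$ be the reduction mod $p$ of the critical $p$-stabilisation $f(q)-\alpha f(q^p)$, where $\alpha$ is the unit root of the Hecke polynomial $X^2-a_p(f)X+p^{k-1}$. Then $\clos f$ is a mod $p$ eigenform of level $N=p$ (so $N_0=1$, $r=1$) with $U\clos f=0$, because the non-unit root of the Hecke polynomial reduces to $0$; its attached Galois representation is $\clos\rho=\clos\rho_{f,p}$, unramified outside $p$, so its away-from-$p$ conductor is $N_0=1$ and $\clos\rho$ is new away from $p$. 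The remaining two hypotheses — $\clos\rho$ absolutely irreducible with unobstructed deformation problem — hold for $f=\Delta$ for every $p\ge 13$ outside a finite set (irreducibility fails only at $p=691$; unobstructedness uses that $p\ge 13$ places $\Delta$ in the Fontaine--Laffaille range $k-1\le p-1$, together with the vanishing criteria of Mazur and Weston), and at each of those finitely many exceptional primes I would instead take a level-$1$ eigenform of another small weight whose mod $p$ representation is irreducible and unobstructed. This is the step I expect to be the only genuine work, and it is what pins the hypothesis to $p\ge 13$.

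With $\clos f$ in hand, fix $n\ge 2$ and let $d\ge 0$ be as in the statement. \cref{heckeps} identifies $\set{T}_{\idl m}$ with a finite integral extension of $\Z[p][\![X,Y,Z,U]\!]$; taking $\sh O$ sufficiently ramified with ramification index $e$ and extending the map that sends the four parameters to $0,0,0,\pi$, one obtains (this is the explicit construction, carried out in the proof of \cref{constprop}) a dc-weak eigenform $\varphi_n\colon\mor{\set{T}}{R}$ with $R=\sh O/\pi^{e(n-1)+1}\sh O$, lifting $\clos f$, such that $\varphi_n(\idl m^{\,e})$ contains $p\ne 0$ and hence $\nu(\varphi_n)>e$; since $e$ may be taken arbitrarily large and the comparison functions of \cref{compfilt} are fixed, this forces $\omega(\varphi_n)>d$.

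Finally I would translate back. By \cref{dcasmaps} and the base-change isomorphism \cref{dcbasechange}, $\varphi_n$ corresponds to a divided congruence $f\in D(R)$, which I view inside $D(\Zmodbar{p}{n})$ via $R\hookrightarrow\Zmodbar{p}{n}$ (this inclusion of modules is injective because $D(\Z[p])$ is free over $\Z[p]$). For any $w'$, membership $f\in D_{w'}(\Zmodbar{p}{n})$ is equivalent to $\varphi_n$ factoring through $\set{T}_{w'}$, equivalently — localising at $\idl m$, as $\varphi_n$ has image in the local ring $R$ — through $\Tbar_{\idl m, w'}$; here one uses that the weight filtration is unchanged under the base change $R\hookrightarrow\Zmodbar{p}{n}$, which holds because $D(\Z[p])/D_{w'}(\Z[p])$ is torsion-free over $\Z[p]$. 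Consequently $w:=\omega(\varphi_n)$ is precisely the weight filtration of $f$: one has $f\in D_w(\Zmodbar{p}{n})$, $f\notin D_{w'}(\Zmodbar{p}{n})$ for every $w'<w$, and $w>d$, which is the assertion. Apart from the construction of $\clos f$, the argument is pure bookkeeping with the duality and base-change results established earlier together with the filtration comparison \cref{compfilt}.
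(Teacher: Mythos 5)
Your overall strategy matches the paper's: feed a suitable mod $p$ eigenform into \cref{constprop} and track the weight filtration. The differences are worth recording, and one of them is a genuine gap.

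First, the mod $p$ eigenform. The paper uses $\theta^{p-1}\clos{\Delta}$ (and $\theta^{p-1}\clos{\Delta}_{16}$ when $p=691$), while you use the mod $p$ reduction of the critical $p$-stabilisation $\Delta(q)-\alpha\Delta(q^p)$. These give the \emph{same} mod $p$ form: both have $q$-expansion $\sum_{p\nmid n}\tau(n)q^n\bmod p$. However, your justification relies on ``$\tau(p)\not\equiv 0\pmod p$ for all primes $p\ge 13$ that arise,'' and this is false as a blanket statement — whether $p\mid\tau(p)$ is not settled in general, and $p=2411$ is a known counterexample. The paper's formulation via $\theta^{p-1}$ avoids the issue entirely, since $\theta$ always produces a form with $U=0$ regardless of ordinarity. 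You do hedge (``in any case one may substitute another level-1 form''), but the paper pins down the exceptional case precisely: for $p\ge 13$ the only failure of irreducibility/unobstructedness for $\Delta$ is $p=691$, where $\Delta_{16}$ is used instead.

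Second, and this is the genuine gap: your $\clos f$ lives at level $N=p$, so \cref{constprop} produces a dc-weak eigenform of level $p$, but the theorem concerns $D_w(\Zmodbar{p}{n})$ at the ambient level $N=p^rN_0$ fixed in the introduction. A level-$p$ eigensystem is not automatically an eigensystem for the level-$N$ Hecke operators $T_\ell$, $\ell\mid N_0$. The paper closes this by composing $\varphi$ with the restriction surjection $\surjmor{\set{T}_{\idl m}'}{\set{T}_{\idl m}}$ from the level-$N$ local Hecke algebra to the level-$p^r$ one; since this surjection carries $\idl m'$ onto $\idl m$, the nilpotence filtration (and hence the lower bound on the weight filtration) is preserved. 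Your argument as written only establishes the theorem when $N=p$.

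Finally, your closing paragraph — translating $\omega(\varphi_n)$ into ``$f\in D_w(\Zmodbar{p}{n})$ but $f\notin D_{w'}(\Zmodbar{p}{n})$ for $w'<w$'' via flatness of $D(\Z[p])/D_{w'}(\Z[p])$ — is correct and is a useful explicit account of a step the paper leaves implicit.
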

\begin{proof} Write $N = p^r N_0$ where $r \geq 1$ and $p \nmid N_0$. First suppose that $p \not = 691$, and let $\clos{\Delta}$ be the mod $p$ reduction of the unique normalised cuspform of level $1$ and weight $12$ and $\clos{\rho}$ the mod $p$ representation attached to it. Consider the mod $p$ eigenform $\theta^{p-1} \clos{\Delta}$. The mod $p$ representation attached to $\theta^{p-1} \clos{\Delta}$ is isomorphic to $\clos{\rho}$. Let $\set{T}_{\idl{m}}$ (respectively $\set{T}_{\idl{m}}'$) be the local component of the level $p^r$ (respectively level $N$) Hecke algebra corresponding to $\theta^{p-1}\clos{\Delta}$. 

The representation $\clos{\rho}$ is absolutely irreducible and its deformation problem is unobstructed (see Section 5.4 of \cite{Wes04} and Appendix F of \cite{Med15}). By \cref{noplevel}, $\clos{\rho}$ is also unobstructed in level $p^r$. Moreover, $U(\theta^{p-1}\clos{\Delta}) = 0$. Using the construction of \cref{constprop}, we can choose a finite extension of $\Q[p]$ with ring of integers $\sh{O}$ and arbitrarily large ramification index $e$ and produce a level $p^r$ dc-weak eigenform
\[ \mor[\varphi]{\set{T}_{\idl{m}}}{\sh{O}/\pi^{e(n-1)+1}\sh{O}} \]
such that $\pi \in \varphi(\idl{m})$. Composing $\varphi$ with the surjection 
$\surjmor{\set{T}_{\idl{m}}'}{\set{T}_{\idl{m}}} $ (induced by restriction) we obtain a level $N$ dc-weak eigenform \[\mor[\tilde{\varphi}]{\set{T}_{\idl{m}}'}{\sh{O}/\pi^{e(n-1)+1}\sh{O}}\] with arbitrarily large nilpotence filtration and hence arbitrarily large weight filtration.

If $p = 691$, we repeat the above argument replacing $\Delta$ by $\Delta_{16}$, the unique normalised cuspform of level $1$ and weight $16$.\end{proof}

\section{When is a dc-weak eigenform weak?}
In \cite{CKW13}, a sufficient condition is given for when a dc-weak eigenform is weak. More precisely, let $R$ be a quotient of $\sh{O}$ such that $\Zmod{p}{n} \subset R$. Suppose $f$ is a dc-weak eigenform with coefficients in $R$ which is in the image of $S(R)$, so that $f = \sum_i f_i$ where $f_i \in S_{i}(R)$. Then it is shown in \cite{CKW13}, Section 5.2, under the assumption that the $f_i$'s are linearly independent mod $p$, that $f$ is a weak eigenform. In this section, we will give another result in this direction which does not involve linear independence mod $p$.

\begin{lemma}\label{grouphecke} The homomorphism $\mor[\eta]{\units{\Z[p]}}{\End[R]{D(R)}}$, defined by $\eta(x)f = [x]f$, $f \in D(R)$, takes values in the subalgebra $\set{T}(R)$.
\end{lemma}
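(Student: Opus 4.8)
The plan is to show that each operator $[x]$ for $x \in \units{\Z[p]}$ can be realized as a limit of polynomials in the Hecke operators $T_n$ ($p \nmid n$) and $U$, hence lies in the $\pi$-adically closed subalgebra $\set{T}(R)$. First I would recall that on a cusp form $g \in S_w(R)$, the diamond-type operator $\braket{a}$ for $a \in \units{(\Z/N\Z)}$ already lies in the image of the Hecke algebra, and more to the point, for $a \in \units{\Z[p]}$ the operator $[x]$ acts on the weight-$w$ piece simply as multiplication by $x^w$. So the content of the lemma is that the ``weight-grading'' operator, which on $S_w(R)$ scales by $x^w$, is a Hecke operator. Because $\set{T}(R) = \varprojlim_w \set{T}_w(R)$ and each $\set{T}_w(R)$ acts faithfully on $D_w(R)$ by \cref{duality}, it suffices to exhibit, for each $w$, an element of $\set{T}_w(R)$ acting as $[x]$ on $D_w(R)$, compatibly under the restriction maps.

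The key step is to produce such an element. I would use the operator $\braket{p}$ and the classical identity relating the weight to the nebentypus action: in level $N = p^r N_0$, the full diamond operator $\braket{d}$ for $d \in \units{(\Z/N\Z)}$ can be decomposed via CRT into a prime-to-$p$ part and a $p$-part $\braket{d}_p$, and on $S_w(R)$ one has $\braket{d}_p$ acting through the character, while $[x]$ interpolates the weight. The cleanest route: for $x \in \units{\Z[p]}$, approximate $x$ $p$-adically by integers $x_j$ with $x_j \equiv x \pmod{p^{N_j}}$ and $x_j \equiv 1 \pmod{N_0}$, and consider the operators built from $\braket{x_j}$ together with the operator $S_\ell$ (whose definition $a_n(S_\ell f) = a_n(\ell^{-2}[\ell]\braket{\ell} f)$ already expresses $[\ell]\braket{\ell}$ in terms of $\ell^2 S_\ell$, a Hecke operator). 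Since $[\ell]\braket{\ell} = \ell^2 S_\ell \in \set{T}(R)$ for every prime $\ell \neq p$, and since the multiplicative group generated by such $\ell$ (for $\ell$ ranging over primes $\equiv 1 \pmod{N_0}$, say) is $p$-adically dense in $\units{\Z[p]}$ after dividing out the finite diamond part, I can write $[x]$ as a $\pi$-adic limit $\lim_j [\ell_1]^{a_1^{(j)}}\cdots[\ell_k]^{a_k^{(j)}} \braket{\ast}^{-1}$ of products of operators already known to lie in $\set{T}(R)$. The limit exists and stabilizes on each $D_w(R)$ because on the weight-$w$ piece everything is just multiplication by a unit depending continuously on the exponents, and $D_w(R)$ is finitely generated; hence the limit lies in $\set{T}_w(R)$, and compatibility across $w$ is automatic since the restriction maps send $T_n \mapsto T_n$, $U \mapsto U$.

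The main obstacle I anticipate is handling the prime-to-$p$ part of the level cleanly: the operator $[x]$ only sees the weight, not the nebentypus, so I must make sure that the integer approximations $x_j$ I choose are $\equiv 1$ modulo $N_0$ (so that the prime-to-$p$ diamond contribution is trivial) while still being $p$-adically dense — this is possible by CRT, but it requires being careful that the primes $\ell$ used to generate the approximations can be taken $\equiv 1 \pmod{N_0}$, which follows from Dirichlet. A secondary technical point is justifying that the $\pi$-adic limit of Hecke operators converges in $\set{T}(R)$: this is where I would invoke that $\set{T}_w(R)$ is a finitely generated $R$-module (by \cref{duality} and \cref{heckebasechange}), hence $\pi$-adically complete, so Cauchy sequences converge, and that the projective limit topology on $\set{T}(R)$ makes the compatible system of limits into a genuine element. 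Once that is in place, the homomorphism property of $\eta$ is immediate from $[x][y] = [xy]$ on $q$-expansions, and the lemma follows.
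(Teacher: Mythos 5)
The paper's ``proof'' is a one-line citation to Lemma~5 of Deo's paper, so you are supplying an argument where the paper gives none; that is worth doing, but your sketch as written has a genuine gap.

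The gap concerns the $p$-part of the diamond operator. The recursion $T_{\ell^2} = T_\ell^2 - \ell S_\ell$ gives you $\ell S_\ell = T_\ell^2 - T_{\ell^2} \in \set{T}(R)$, and hence $[\ell]\braket{\ell} = \ell^2 S_\ell \in \set{T}(R)$ for primes $\ell \nmid pN$. But $\braket{\ell}$ here is the full $\Gamma_1(N)$ diamond for $N = p^rN_0$, so under the CRT decomposition $\units{(\Z/N\Z)} \cong \units{(\Z/p^r\Z)} \times \units{(\Z/N_0\Z)}$ it factors as $\braket{\ell} = \braket{\ell}_p\,\braket{\ell}_{N_0}$. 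Choosing $\ell \equiv 1 \pmod{N_0}$ trivializes $\braket{\ell}_{N_0}$ but does nothing to $\braket{\ell}_p$, and you certainly cannot also impose $\ell \equiv 1 \pmod{p^r}$ while keeping the primes $p$-adically dense in $\units{\Z[p]}$. So the $\pi$-adic limits you form only exhibit operators of the shape $[x]\braket{x}_p$ (for $x$ in the closure of your prime set) as elements of $\set{T}(R)$; that is precisely Katz's weight-character action, not the weight-only operator $[x]$ of the paper. Your phrase ``after dividing out the finite diamond part'' and the $\braket{\ast}^{-1}$ appearing in the claimed limit quietly assume that $\braket{a} \in \set{T}(R)$ for $a \in \units{(\Z/p^r\Z)}$, which you never establish --- and which is, for the same structural reason, essentially as hard as the statement you are trying to prove. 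Any repaired argument along these lines must produce a separate mechanism (e.g.\ via the Galois/pseudo-deformation description of the local components of $\set{T}$, or a control-theorem argument comparing with tame level $N_0$ as in Remark~\ref{noplevel}) to isolate either $[x]$ or $\braket{a}_p$ from the product $[\ell]\braket{\ell}$; density of primes alone cannot do it, because the primes $(\ell, \ell \bmod p^r, \ell \bmod N_0)$ only fill out the ``diagonal-at-$p$'' closed subgroup of $\units{\Z[p]} \times \units{(\Z/N\Z)}$, not the full group. The secondary technical points you raise (completeness of $\set{T}_w(R)$, compatibility across weights, the homomorphism property of $\eta$) are fine, but they do not touch the central obstruction.
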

\begin{proof} See Lemma 5 of \cite{Deo17}. \end{proof}

\begin{theorem}\label{dcisweak}Let $R$ be a quotient of $\sh{O}$ that contains $\Zmod{p}{n}$. Let $f \in S(R)$ be a (normalised) dc-weak eigenform with coefficients in $R$. Then $f$ is weak if and only if the eigenvalue of $f$ under the action of $\gamma = 1+p \in \units{\Z[p]}$ lies in $\Zmod{p}{n}$. 
\end{theorem}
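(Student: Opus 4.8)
The plan is to exploit the operator $[\gamma]$ with $\gamma = 1+p$, which by \cref{grouphecke} lies in $\set{T}(R)$, and to use the structure of $\units{\Z[p]} \cong (1+p\Z[p]) \times \mu_{p-1}$ together with the fact that a weak eigenform lives in a single weight to pin down its eigenvalue. One direction is immediate: if $f \in S_{w'}(R)$ is a weak eigenform, then by the very definition of the operator $[x]$ on $D(R)$ (acting by $x^{w_i}$ on the weight-$w_i$ component), we have $[x]f = x^{w'}f$ for all $x \in \units{\Z[p]}$; in particular the eigenvalue of $f$ under $[\gamma]$ is $\gamma^{w'} = (1+p)^{w'}$, which certainly lies in $\Zmod{p}{n} \subset R$. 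So the content is the converse.

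For the converse, suppose $f = \sum_i f_i$ with $f_i \in S_i(R)$, and let $\lambda \in R$ be the eigenvalue of $f$ under $[\gamma]$, assumed to lie in $\Zmod{p}{n}$. The key point is that $[\gamma]$ acts on the weight-$i$ piece $f_i$ by the scalar $(1+p)^i$, so the eigenform equation $[\gamma]f = \lambda f$ forces $(1+p)^i f_i = \lambda f_i$ for each $i$ (here one must be slightly careful: the $f_i$ are the components in the direct sum decomposition $S(\sh{O}) = \bigoplus S_i(\sh{O})$, pushed to $R$, but the operator $[\gamma]$ respects this grading on $S(\sh{O})$ and hence its action on $D(R)$ is diagonal with respect to the images of the graded pieces). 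Thus $f_i \neq 0$ forces $(1+p)^i \equiv \lambda \pmod{p^n}$ in $R$. Since $\lambda \in \Zmod{p}{n}$ and $p \geq 5$, the element $1+p$ is a topological generator of $1+p\Z[p]$, and the map $i \mapsto (1+p)^i \bmod p^n$ factors through $\Z/p^{n-1}\Z$ and is injective there; so all indices $i$ with $f_i \neq 0$ lie in a single residue class $i_0 \bmod p^{n-1}$. Then, exactly as in the Remark following \cref{def1}, the congruence $E_{p-1}^{p^{n-1}} \equiv 1 \pmod{p^n}$ lets us multiply each $f_i$ by an appropriate power of $E_{p-1}^{p^{n-1}}$ without changing it mod $p^n$, thereby moving every component up to a common weight $w'$; collecting them gives $f \in S_{w'}(R)$, i.e.\ $f$ is weak.

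The main obstacle I expect is justifying the diagonalisation step rigorously over $R$ rather than over $\sh{O}$: one needs that the operator $[\gamma]$ on $D(R)$ genuinely acts on the image of $f_i \in S_i(R)$ by the scalar $(1+p)^i$, and that the decomposition $f = \sum_i f_i$ one starts from (which exists because $f \in S(R)$, hence by definition $f$ is an $R$-linear combination of images of elements of the $S_i(\sh{O})$) can be chosen compatibly with this action — the subtlety being that $\sum_i S_i(R)$ is not a direct sum in general. I would handle this by lifting: write a preimage $\tilde f = \sum_i \tilde f_i \in S(\sh{O}) = \bigoplus_i S_i(\sh{O})$ (after possibly enlarging, a genuine direct sum), apply $[\gamma]$ there where everything is honestly graded, and only then reduce mod the kernel of $\sh{O} \twoheadrightarrow R$; the eigenform condition on $f$ then translates into the statement that $\sum_i ((1+p)^i - \lambda)\tilde f_i$ maps to $0$ in $R[\![q]\!]$, and since the $\tilde f_i$ are $\sh{O}$-linearly independent this gives $(1+p)^i \equiv \lambda \pmod{\ker}$ for each $i$ with $\tilde f_i \not\equiv 0$. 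A secondary point to check is that once the surviving weights are congruent mod $p^{n-1}(p-1)$ — not merely mod $p^{n-1}$, which is what the $[\gamma]$-argument gives on the nose — one may also need the diamond-operator or $\theta$-operator normalisation to align the remaining $(p-1)$-part; but since we are free to multiply by $E_{p-1}$ (weight $p-1$, trivial nebentypus, $\equiv 1 \bmod p$ up to the $p^{n-1}$-th power relation) this costs nothing, and the argument closes.
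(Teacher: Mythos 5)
There is a genuine gap in the converse direction, precisely at the point you flag as the ``main obstacle.'' Your central claim is that the eigenform equation $[\gamma]f = \lambda f$ forces $(1+p)^i f_i = \lambda f_i$ for each $i$, and your proposed remedy is to lift $f$ to $\tilde f = \sum_i \tilde f_i \in S(\sh{O}) = \bigoplus_i S_i(\sh{O})$ and invoke $\sh{O}$-linear independence of the graded pieces. But this does not close the gap: the eigenform condition only tells you that $\sum_i \bigl((1+p)^i - \lambda\bigr)\tilde f_i$ lies in the kernel of $S(\sh{O}) \to R[\![q]\!]$, and that kernel is \emph{not} graded by weight. Cross-weight congruences between cusp forms of distinct weights mean a sum can map to zero in $R[\![q]\!]$ without each weight component doing so individually --- this is exactly the phenomenon, recalled at the start of Section 2, that makes $\sum_i S_i(R)$ fail to be a direct sum. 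So you cannot conclude $(1+p)^i \equiv \lambda \pmod{\ker}$ weight by weight, and the ensuing steps (all surviving $i$ in one residue class mod $p^{n-1}$, then multiply by powers of $E_{p-1}^{p^{n-1}}$ to align) rest on a false premise. Your remark about needing to ``align the remaining $(p-1)$-part'' also underestimates a second independent constraint that the paper treats explicitly.

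The paper's proof takes a fundamentally different route that avoids diagonalisation altogether. It uses the eigenvalue of $[\gamma]$ \emph{together with} the eigenvalue of $[\zeta]$ for a primitive $(p-1)$st root of unity $\zeta \in \units{\Z[p]}$; the hypothesis $\lambda \in \Zmod{p}{n}$ combined with $[\gamma^{p^{n-1}}]f = f$ pins $\lambda = (1+p)^\beta$, and CRT then produces a single integer $w$ with $[x]f = x^w f$ for \emph{all} $x \in \units{\Z[p]}$. From there the paper passes to Katz's moduli-theoretic description of divided congruences (rules on triples $(E/A,\alpha_N,\Phi)$) to build a genuine $p$-adic modular form $g$ of weight $w$ with the same $q$-expansion as $f$, and then invokes Proposition 2.7.2 of Katz to produce a true modular form of some weight $w' \geq w$ with that $q$-expansion. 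That theorem of Katz is the deep input your elementary argument is missing, and it cannot be replaced by the linear-algebra reasoning you propose.
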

\begin{proof} If $f$ is weak of weight $w$, then $[\gamma]f = \gamma^w f$, so the eigenvalue of $f$ under the action of $\gamma$ lies in $\Zmod{p}{n}$.

We now show the converse. Write $f = \sum_i f_i$ where $f_i \in S_{i}(R)$. Since $f$ is a dc-weak eigenform mod $p^n$, $f$ is in particular an eigenvector for the action of a primitive $(p-1)$st root of unity $\zeta \in \units{\Z[p]}$. Since $a_1(f) = 1$ and $[\zeta^{p-1}]f = f$, we have
\[ [\zeta] f = \zeta^{\alpha} f \]
for some $\alpha \in \Zmod{(p-1)}{}$. 

Let $\gamma = 1+p \in \units{\Z[p]}$. Since $f$ is a dc-weak eigenform, $f$ is in particular an eigenvector for the action of $\gamma$, and we have assumed that
\[ [\gamma]f = \lambda f\]
where $\lambda \in \Zmod{p}{n}$. Since $f \in S(R)$ and $p^n R = 0$, we have
\[ [x]f = \sum_i x^i f_i = \sum_i f_i = f\] for all $x \in 1+p^n\Z[p]$, so in particular $[\gamma^{p^{n-1}}]f = f$. Since $a_1(f) = 1$, it follows that $\lambda^{p^{n-1}} = 1$. Thus $\lambda = (1+p)^\beta$ for some $\beta \in \Zmod{p}{n-1}$. 

The elements $\zeta$ and $\gamma$ topologically generate $\units{\Z[p]}$. That is, for an element $x \in \units{\Z[p]}$, we may write $x \equiv \zeta^t \gamma^s \pmod{p^n}$ for some $t \in \Zmod{(p-1)}{}$ and some $s \in \Zmod{p}{n-1}$. Then
\[ [x]f = [\zeta^t][\gamma^s]f = \zeta^{t\alpha}\gamma^{s\beta} f = x^w f \] where $w \in \Z$ is a non-negative integer chosen by the Chinese Remainder Theorem to satisfy
\begin{itemize}
\item $w \equiv \alpha \pmod{p-1}$, and
\item $w \equiv \beta \pmod{p^{n-1}}$.
\end{itemize}

We now argue exactly as in the second part of the proof of Proposition A1.6 in \cite{Kat75}. We present only a sketch, and we refer to \cite{Kat73} and \cite{Kat75} for details. Even though in these references Katz works over with the full level structure $\Gamma(N)$, the results carry over to $\Gamma_1(N)$-level structure. For details on that point, see Chapter 1 of \cite{Gou88}. 

Let $V_{n,0}$ denote the $R$-algebra of functions on the moduli space of pairs $(E/A, \iota_N)$ where $E/A$ is an elliptic curve over an $R$-algebra $A$ with ordinary reduction and $\iota_N$ is a level $N$ structure on $E$. Then for a specific $R$-algebra $A$, the $R$-algebra morphism $V_{n,0}\rightarrow A$ classifying pairs $(E/A, \iota_N)$ gives $A$ the structure of a $V_{n,0}$-algebra. Moreover, by Katz' moduli interpretation of divided congruences, $V_{n,0} \subset D(R)$. We let $A' = A\otimes_{V_{n,0}} D(R)$.

Since $p$ is nilpotent in $R$, we find by Katz' moduli interpretation that $f$ is a rule which to each situation
\[ \xymatrix{ (E/A, \alpha_N, \Phi) \ar[d] \\ \Spec(A)  } \]
where
\[ \begin{cases} A $ is an $R$-algebra,$ \\
(E, \alpha_N) $ is an elliptic curve with a level $N$ structure over $R, \\
\Phi $ is an isomorphism of formal groups $ \isomor{\widehat{E}}{\widehat{\set{G}}_m}, $ and $ \\
\end{cases} \]
assigns an element $f(E/A, \alpha_N, \Phi)$ of $A$ which depends only on the isomorphism class of $(E/A, \alpha_N, \Phi)$ and whose formation commutes with base change of $R$-algebras. 

Let $E/A$ be an elliptic curve with ordinary reduction, $\iota_N$ a level $N$ structure on $E$, and $\omega$ a nowhere-vanishing invariant differential. Since $E$ has ordinary reduction, there exists over $A'$ an isomorphism $\Phi: \isomor{\widehat{E}}{\widehat{\set{G}}_m}$, and thus 
\[ \omega = c\Phi^{\ast}\left( \frac{dT}{T+1} \right) \]
where $c \in A'$ and $dT/(T+1)$ is the canonical invariant differential on $\widehat{\set{G}}_m$. 

We define the element
\[ g(E/A, \iota_N, \omega) = c^{-w}f(E/A, \iota_N, \Phi) \in A'. \]
The definition of this element does not depend on the choice of the isomorphism $\Phi$, and this independence implies that
\[ g(E/A, \iota_N, \omega) \in A. \]
Thus the rule $g$ is a $p$-adic modular form over $R$, and its $q$-expansion is the same as the $q$-expansion of $f$. By Proposition 2.7.2 of \cite{Kat73}, there exists a true modular form $\tilde{g}$ of level $N$ and weight $w' \geq w$ whose $q$-expansion is the same as that of $g$, hence the same as that of $f$. This concludes the proof. 
\end{proof}

We immediately get the following result.
\begin{corollary} Suppose that $f \in S(\Zmod{p}{n})$. Then $f$ is a dc-weak eigenform if and only if it is weak.
\end{corollary}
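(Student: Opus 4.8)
The plan is to obtain this as an immediate consequence of \cref{dcisweak}. One direction requires no work: a weak eigenform is, by \cref{def1} (equivalently \cref{def2}), in particular a dc-weak eigenform, so weak eigenforms are a fortiori dc-weak eigenforms.

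For the converse, suppose $f \in S(\Zmod{p}{n})$ is a dc-weak eigenform. First I would check that the coefficient ring $R = \Zmod{p}{n}$ satisfies the hypotheses of \cref{dcisweak}: indeed $\Zmod{p}{n}$ is the quotient $\Z[p]/p^n\Z[p]$ of $\sh{O} = \Z[p]$, and it trivially contains $\Zmod{p}{n}$. The eigenvalue of $f$ under the operator $[\gamma]$ with $\gamma = 1+p \in \units{\Z[p]}$ is, by definition of an eigenform, an element of the coefficient ring, which here is exactly $\Zmod{p}{n}$; hence it lies in $\Zmod{p}{n}$ automatically. Therefore the criterion in \cref{dcisweak} is satisfied and $f$ is weak.

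There is essentially no obstacle here: all the content sits in \cref{dcisweak}, and this corollary merely records the observation that over the coefficient ring $\Zmod{p}{n}$ the eigenvalue condition appearing in that theorem is vacuous. The only point worth spelling out is that $\Zmod{p}{n}$ is an admissible coefficient ring for \cref{dcisweak}, which is immediate.
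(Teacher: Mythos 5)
Your proof is correct and matches the paper's intended (implicit) argument: the paper states the corollary is immediate from \cref{dcisweak}, and you have correctly observed that with $R=\Zmod{p}{n}$ (a quotient of $\sh{O}=\Z[p]$ containing $\Zmod{p}{n}$) the eigenvalue of $[\gamma]$ automatically lies in $\Zmod{p}{n}$, so the criterion is vacuous.
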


With a little bit more work, we get the following result for dc-weak eigenforms mod $p^2$ with totally ramified coefficients.
\begin{corollary} Suppose that $\sh{O}$ is totally ramified of ramification index $e$. Let $R = \sh{O}/\pi^{e+1}$, and $f \in S(R)$. Then $f$ is a dc-weak eigenform if and only if $f$ is weak.
\end{corollary}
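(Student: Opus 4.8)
The plan is to reduce everything to \cref{dcisweak} applied with $n=2$. First one checks that $R=\sh{O}/\pi^{e+1}$ really does contain $\Zmod{p}{2}$: since $\sh{O}$ is totally ramified of degree $e$ over $\Z[p]$, the $\pi$-adic valuation of a nonzero rational integer is a positive multiple of $e$, so $\Z[p]\cap\pi^{e+1}\sh{O}=p^2\Z[p]$ and the image of $\Z[p]$ in $R$ is the prime subring $\Z[p]/p^2\Z[p]\cong\Zmod{p}{2}$. A weak eigenform is by definition dc-weak, so it then suffices to show that every normalised dc-weak eigenform $f\in S(R)$ has the property that the eigenvalue of $\gamma=1+p$ acting on $f$ lies in $\Zmod{p}{2}\subset R$; \cref{dcisweak} will then give that $f$ is weak.

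To compute that eigenvalue I would write $f=\sum_i f_i$ with $f_i\in S_i(R)$, a finite sum. Because $v_\pi(p^2)=2e\ge e+1$, we have $p^2=0$ in $R$, so the binomial expansion of $(1+p)^i$ collapses to $1+ip$ in $R$; hence $[\gamma]$ acts on $S_i(R)$ as multiplication by $1+ip$. Since $f$ is a dc-weak eigenform it is an eigenvector for $[\gamma]\in\set{T}(R)$ (\cref{grouphecke}), and comparing $q^1$-coefficients in $[\gamma]f=\sum_i(1+ip)f_i$ while using $\sum_i a_1(f_i)=a_1(f)=1$ shows that this eigenvalue is
\[ \lambda \;=\; a_1([\gamma]f) \;=\; \sum_i(1+ip)\,a_1(f_i) \;=\; 1+p\sum_i i\,a_1(f_i) \;\in\; 1+pR. \]
It remains to check $1+pR\subseteq\Zmod{p}{2}$: writing $p=u\pi^e$ with $u\in\units{\sh{O}}$ gives $pR=\pi^e R$, a set of $p$ elements, and a short valuation computation shows the $p$ rational integers $0,p,2p,\dots,(p-1)p$ are pairwise incongruent modulo $\pi^{e+1}$ and hence exhaust $pR$; thus $pR$, and so $1+pR$, lies in the prime subring $\Zmod{p}{2}$ of $R$. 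Therefore $\lambda\in\Zmod{p}{2}$ and we conclude by \cref{dcisweak}. The converse implication (weak $\Rightarrow$ dc-weak) is immediate from the definitions.

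I do not anticipate a genuine obstacle here: all the substance is already contained in \cref{dcisweak}, and the ``little bit more work'' referred to in the statement is precisely the elementary bookkeeping above — namely that for these particular mod $p^2$ coefficient rings the obstruction ``$\lambda\notin\Zmod{p}{n}$'' cannot occur, because $f$ being an honest sum of modular forms over $R$ forces $\lambda\equiv 1\pmod p$. The only points asking for any care are the two valuation identities $\Z[p]\cap\pi^{e+1}\sh{O}=p^2\Z[p]$ and $pR\subseteq\Zmod{p}{2}$, both of which are immediate consequences of $\sh{O}$ being totally ramified of degree $e$.
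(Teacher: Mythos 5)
Your proof is correct and follows essentially the same route as the paper's: both reduce to Theorem \ref{dcisweak} with $n=2$, compute the $[\gamma]$-eigenvalue as $\lambda = 1+pc$ using $p^2R=0$, and then invoke total ramification (residue field $\F[p]$) to place $\lambda$ in $\Zmod{p}{2}$. Your element-counting argument that $pR$ is exhausted by the images of $0, p, \dots, (p-1)p$ is just a more explicit version of the paper's step of lifting $c \bmod \pi$ to an integer $c'$ and multiplying by $p$.
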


\begin{proof} Let $\gamma = 1+p$ and let $\lambda$ be the eigenvalue of $[\gamma]$ acting on $f$. Write $f = \sum_i f_i$ where $f_i \in S_i(R)$. Let $b_i = a_1(f_i)$. Since $f$ is a dc-weak eigenform, we have $\sum_i b_i = 1$. Moreover, we have $\sum_i \gamma^i b_i = \lambda$. Since $p^2R = 0$, we get
\[ \lambda = 1 + pc\]
for some $c \in R$. Since $\sh{O}$ is totally ramified, its residue field is $\F[p]$, hence there exists $c' \in \Z[p]$ such that $c \equiv c' \pmod{\pi}$. Multiplying by $p$, we get $pc \equiv pc' \pmod{\pi^{e+1}}$ hence $\lambda \in \Zmod{p}{2}$. We then apply \cref{dcisweak}.
\end{proof}

\appendix
\section{Tensor product and projective limit}

In the proof of  \cref{heckebasechange}, we needed to interchange tensor product and projective limit. For the sake of completeness, we provide a technical lemma that permits this interchange, since we could not find this result explicitly in the literature. 
\begin{lemma}\label{tensorlimit} Let $R$ be a topological ring, $B$ a finitely generated $R$-module, and 
\[\{\surjmor[\pi_{i,j}]{A_i}{A_j}\}_{i,j\geq 0}\] a projective system of compact Hausdorff $R$-modules such that the transition maps $\pi_{i,j}$ are continuous and surjective. Suppose further that $\Tor[R]{A_t}{B} = 0$ for all $t\geq 0$. Then
\[ \varprojlim_t \left(A_t \otimes_R B\right) \cong \left(\varprojlim_t A_t\right)\otimes_R B. \]
\end{lemma}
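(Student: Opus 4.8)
The plan is to reduce the statement to the fact that the functor $-\otimes_R B$ is exact on the relevant modules (which is exactly what the vanishing $\operatorname{Tor}^R_1(A_t,B)=0$ buys us) and then to check that exactness survives passage to the projective limit because the system satisfies the Mittag--Leffler condition in a very strong sense (all transition maps surjective, all terms compact Hausdorff). First I would fix a presentation $R^m \xrightarrow{M} R^n \to B \to 0$ of the finitely generated module $B$; such a presentation exists as soon as $B$ admits a finite generating set, even without Noetherian hypotheses, though the presenting map $M$ need not have finitely generated kernel — this does not matter for the argument.

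Tensoring the presentation with each $A_t$ gives a complex $A_t^m \to A_t^n \to A_t\otimes_R B \to 0$ which is right exact automatically; the point of the $\operatorname{Tor}$ hypothesis is not needed for right-exactness but I keep it in mind for the limit step. Now I would take the projective limit over $t$ of the family of right-exact sequences
\[ A_t^m \longrightarrow A_t^n \longrightarrow A_t\otimes_R B \longrightarrow 0. \]
The limit $\varprojlim_t$ is left exact in general, so I get an exact sequence
\[ 0 \to \varprojlim_t \ker\!\big(A_t^n \to A_t\otimes_R B\big) \to \varprojlim_t A_t^n \to \varprojlim_t (A_t\otimes_R B). \]
The key extra input is that for a projective system of compact Hausdorff topological modules with continuous surjective transition maps, the limit functor is in fact \emph{exact}: a surjection of such systems has surjective limit. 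This is the standard compactness argument — the preimages of a point form a nested family of nonempty compact sets, hence have nonempty intersection — and it lets me conclude that $\varprojlim_t A_t^n \to \varprojlim_t(A_t\otimes_R B)$ is surjective and that $\varprojlim_t\ker(\cdots) = \ker(\varprojlim_t A_t^n \to \varprojlim_t(A_t\otimes_R B))$. I should verify that the relevant kernels really are compact Hausdorff modules (closed submodules of compact Hausdorff modules, hence compact) and that the induced maps remain surjective, which uses $\operatorname{Tor}^R_1(A_t,B)=0$ precisely to identify $\ker(A_t^n \to A_t\otimes_R B)$ with the image of $A_t^m$, so that the transition maps on these kernels are surjective.

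Putting this together: $\varprojlim_t A_t^m \to \varprojlim_t A_t^n \to \varprojlim_t(A_t\otimes_R B) \to 0$ is exact. Since $\varprojlim$ commutes with finite direct sums, $\varprojlim_t A_t^m = (\varprojlim_t A_t)^m$ and likewise for $n$, so this reads $(\varprojlim_t A_t)^m \to (\varprojlim_t A_t)^n \to \varprojlim_t(A_t\otimes_R B)\to 0$. On the other hand, tensoring the presentation of $B$ with $\varprojlim_t A_t$ gives $(\varprojlim_t A_t)^m \to (\varprojlim_t A_t)^n \to (\varprojlim_t A_t)\otimes_R B \to 0$, and one checks the two cokernels are identified by the same map $M$, yielding the desired isomorphism $\varprojlim_t(A_t\otimes_R B) \cong (\varprojlim_t A_t)\otimes_R B$. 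The main obstacle I expect is the bookkeeping in the limit step: making sure all the systems appearing (the kernels, the images) are genuinely compact Hausdorff with surjective transitions so that the exactness-of-$\varprojlim$ lemma applies at each spot, and in particular using the $\operatorname{Tor}$-vanishing at the right moment to guarantee surjectivity of the transition maps on the kernel systems rather than merely on the $A_t$ themselves. Everything else is formal manipulation of right-exact sequences.
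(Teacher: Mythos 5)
Your proposal takes a genuinely different route from the paper. You resolve $B$ by free modules and push exactness through the projective limit using the compactness of the terms, whereas the paper works directly with the surjection $A^n \to A \otimes_R B$ coming from a finite generating set, topologizes $A \otimes_R B$ as a quotient of $A^n$, and shows that the subsystem $\{I_t \otimes_R B\}_t$ (where $I_t = \ker(A \to A_t)$ and $A = \varprojlim A_t$) has vanishing intersection and vanishing $\varprojlim^1$. Both arguments rest on the same compactness input (that a tower of compact Hausdorff abelian groups has $\varprojlim^1 = 0$, cited from McGibbon), so the underlying toolkit is the same.

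However, your argument has a genuine gap: it silently upgrades ``finitely generated'' to ``finitely presented''. You write a presentation $R^m \xrightarrow{M} R^n \to B \to 0$ and remark that it is fine if $M$ does not have a finitely generated kernel, but that is not the relevant concern. The real issue is one step earlier: for $m$ to be \emph{finite}, the kernel of the chosen surjection $R^n \to B$ must itself be finitely generated, i.e.\ $B$ must be finitely presented. The lemma assumes only finite generation over an arbitrary topological ring $R$, so this is not automatic. Finiteness of $m$ is used essentially in at least three places in your argument: (i) to make $A_t^m$ compact, so that $\ker(A_t^n \to A_t \otimes_R B) = \operatorname{im}(A_t^m \to A_t^n)$ is a compact, hence closed, submodule of $A_t^n$; (ii) to make $\ker(A_t^m \to A_t^n)$ compact Hausdorff, so that the $\varprojlim^1$ controlling surjectivity of $\varprojlim A_t^m \to \varprojlim \ker_t$ vanishes; and (iii) in the final comparison, where you invoke ``$\varprojlim$ commutes with finite direct sums'' to identify $\varprojlim_t A_t^m$ with $(\varprojlim_t A_t)^m$. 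With infinite $m$, $A_t^{\oplus m}$ is no longer compact and $\varprojlim_t (A_t^{\oplus m}) \neq (\varprojlim_t A_t)^{\oplus m}$ in general, so each of these steps breaks down and you would only obtain a surjection rather than an isomorphism. The paper's proof avoids the issue entirely by never introducing a second free module: it shows directly that $\bigcap_t (I_t \otimes_R B) = 0$ using only that the $I_t$ form a shrinking neighbourhood basis of $0$ in the compact Hausdorff module $A$, which requires nothing beyond a finite generating set of $B$. In the paper's actual application $R$ is a quotient of a discrete valuation ring, hence Noetherian, so finitely generated does imply finitely presented and your argument would suffice there; but as a proof of the lemma as stated the reduction to a finite presentation needs to be removed.
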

\begin{proof} Let $b_1, \ldots, b_n$ be generators of $B$. Let $A = \varprojlim_t A_t$ and put the projective limit topology on $A$. Then $A$ is a compact Hausdorff $R$-module. Let $I_t$ be the kernel of the projection map $\surjmor[\pi_t]{A}{A_t}$. Then for each $t$ we have inclusions $I_{t+1} \subset I_t$, and $\bigcap_t I_t = \{0\}$ since $A$ is Hausdorff. Moreover, we have for each $t$ a short exact sequence
\[\xymatrix{ 0\ar[r]&I_t\otimes_R B \ar[r]&A\otimes_R B \ar[r]& A_t\otimes_R B \ar[r]& 0 }\label{ses} \tag{$\ast$} \]
(since we are assuming that $\Tor[R]{A_t}{B} = 0$). Define a map of $R$-modules
\[ f:\mor{\prod_{i=1}^n A}{A\otimes_R B}, \]
\[ (a_1, \ldots, a_n) \mapsto \sum_{i=1}^n a_i \otimes b_i.\] We endow $A\otimes_R B$ with the topology coinduced by $f$, and $I_t\otimes_R B$ with the subspace topology. This makes $A\otimes_R B$ and $I_t\otimes_R B$ compact $R$-modules.

Applying the projective limit functor to \ref{ses}, we get an exact sequence
\[\xymatrix{0 \ar[r]& \varprojlim \left(I_t\otimes_R B\right)\ar[r] & A\otimes_R B \ar[r] & \varprojlim \left(A_t\otimes_R B\right) \ar[r] & \varprojlim^1 \left(I_t \otimes B\right). }\]
It is actually enough to show that $\varprojlim_t \left(I_t\otimes_R B\right) = 0$. For suppose for a moment that this is true. Then $\{I_t\otimes_R B\}_{t\geq 0}$ is a projective system of compact Hausdorff topological groups, and by Proposition 4.3 of \cite{McG95}, this implies that $\varprojlim^1 \left(I_t \otimes B \right)= 0$. 

Now we have \[ \varprojlim_t \left(I_t \otimes B\right) = \bigcap_t \left(I_t \otimes B\right). \]
Suppose $y \in \bigcap_t \left(I_t \otimes B\right)$. Then, for all $t$, there exists elements $x_1^{(t)}, \ldots, x_n^{(t)} \in I_t$ such that $y = \sum_{i=1}^n x_i^{(t)}\otimes b_i = f(x_1^{(t)}, \ldots, x_n^{(t)})$. Since $A$ is compact Hausdorff, for each $i$ we have that $x_i^{(t)}\rightarrow 0$ as $t \rightarrow \infty$. By continuity of $f$, this means that
\[ y = \lim_{t \rightarrow \infty} f(x_1^{(t)}, \ldots, x_n^{(t)}) = 0. \]\end{proof}

\subsection*{Acknowledgements}
This research was supported by VILLUM FONDEN through the network for Experimental Mathematics in Number Theory, Operator Algebras, and Topology while the author was a postdoc at the University of Copenhagen. The author would also like to thank the anonymous referee for the thorough reading of this paper and for the many valuable suggestions and comments that the referee provided.

\providecommand{\bysame}{\leavevmode\hbox to3em{\hrulefill}\thinspace}
\providecommand{\MR}{\relax\ifhmode\unskip\space\fi MR }
\providecommand{\MRhref}[2]{%
  \href{http://www.ams.org/mathscinet-getitem?mr=#1}{#2}
}
\providecommand{\href}[2]{#2}

\end{document}